\numberwithin{equation}{section} 
\newcounter{cont}[section] 
\newtheorem{thm}[cont]{Theorem}
\newtheorem{prop}[cont]{Proposition}
\newtheorem{lem}[cont]{Lemma}
\theoremstyle{definition}
\newtheorem{defn}[cont]{Definition}
 \theoremstyle{remark}
 \newtheorem{rem}[cont]{Remark}
\newcommand{\R}{\mathbb{R}}
\newcommand{\e}{\varepsilon}
\begin{document}

\title[Ginzburg--Landau functional with mean curvature operator]{Minimization of a Ginzburg--Landau functional with mean curvature operator in $1$-D}

\author[R. Folino]{Raffaele Folino}

\address[R. Folino]{Departamento de Matem\'aticas y Mec\'anica\\Instituto de 
Investigaciones en Matem\'aticas Aplicadas y en Sistemas\\Universidad Nacional Aut\'onoma de 
M\'exico\\Circuito Escolar s/n, Ciudad Universitaria C.P. 04510 Cd. Mx. (Mexico)}

\email{folino@aries.iimas.unam.mx}

\author[C. Lattanzio]{Corrado Lattanzio$^*$}

\address[C. Lattanzio]{Dipartimento di Ingegneria e Scienze
dell'Informazione e Matematica \\
Universit\`a degli Studi dell'Aquila \\
L'Aquila (Italy)}

\email{corrado.lattanzio@univaq.it}

\keywords{Ginzburg--Landau  functional; mean curvature operator; transition layer structure; Cahn--Hilliard equation.}

\subjclass[2020]{35B36, 35B38, 35B40,
35K55}

\thanks{$^*$ Corresponding author}

\maketitle


\begin{abstract} 
The aim of this paper is to investigate the minimization problem related to a Ginzburg--Landau energy functional, 
where in particular a nonlinear diffusion of mean curvature--type is considered, together with a classical double well potential.
A careful analysis of the corresponding Euler--Lagrange equation, equipped with natural boundary conditions and mass constraint, leads to the existence of an unique \emph{Maxwell solution}, namely a monotone increasing solution obtained for small diffusion and close to the so--called \emph{Maxwell point}. 
Then, it is shown that this particular solution (and its reversal) has least energy among all the stationary points satisfying the given mass constraint.
Moreover, as the viscosity parameter tends to zero, it converges to   
 the increasing (decreasing for the reversal) \emph{single interface solution}, namely the constrained minimizer of the corresponding energy without diffusion.
 Connections with Cahn--Hilliard models, obtained in terms of variational derivatives of the total free energy considered here, are also presented.
\end{abstract}


\section{Introduction}\label{sec:intro}
Following the original Van der Walls' theory, the energy of a fluid in a one dimensional vessel is given by the Ginzburg--Landau  functional
\begin{equation}\label{GL-intro}
    E[u]=\int_a^b \left[ \frac{\e^2}{2} u_x^2+ F(u) \right] \, dx,
\end{equation}
where $F(u)$ stands for the free energy per unit volume, and $\e$ is a positive (small) viscosity coefficient. 
The study of the minimization of \eqref{GL-intro} with the constant mass constraint, and the complete description of the corresponding minimizer(s), has been carried out in the seminal paper \cite{CGS}, together with the connection of the latter to the minimizer of the purely stationary case $\e=0$, when interfaces (jump in the density $u$) are allowed without any increase of energy.
We also recall that the same problem in the multidimensional case was addressed in \cite{Modica}.
Following the blueprint of \cite{CGS}, the
main goal of our paper is then to minimize the functional
\begin{equation}\label{eq:energy}
	E[u]=\int_{-1}^1\left[\frac{Q(\e^2 u')}{\e^2}+F(u)\right]\,dx,
\end{equation}
over all $u\in H^1(-1,1)$ satisfying the mass constraint 
\begin{equation}\label{eq:constraint-intro}
	\int_{-1}^1 u(x)\,dx=2r.
\end{equation}
In \eqref{eq:energy}, the function $Q$ is explicitly given by
\begin{equation}\label{eq:Q}
	Q(s)=\sqrt{1+s^2}-1,
\end{equation}
while for the free energy we assume that there exist $0<\overline\alpha<\underline\beta\in\R$ such that the function $F\in C^5(0,\infty)$ satisfies 
\begin{equation}\label{eq:hypF}
	\begin{aligned}
	    & F''>0 \, \mbox{ on } \, (0,\overline\alpha)\cup(\underline\beta,\infty), \qquad F''<0 \, \mbox{ on } \, (\overline\alpha,\underline\beta);\\
        & F'(0)<F'(\underline\beta), \qquad \quad F'(\infty)>F'(\overline{\alpha}).
	\end{aligned}
\end{equation} 
The  above conditions are summarized in Figure \ref{fig:F'}, from which the connection with the  aforementioned 1-$d$ Van der Walls' theory is manifest.
\begin{figure}
\centering
\includegraphics{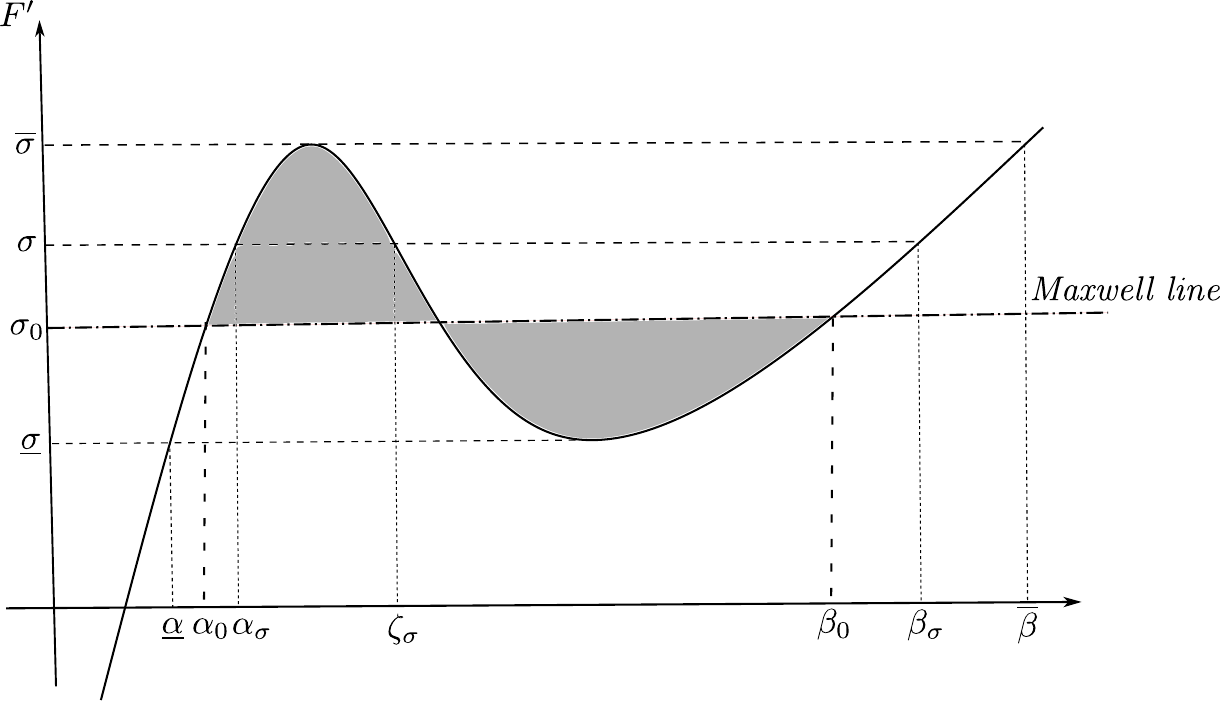}
\caption{Graph of $F'$ and possible choices of $\sigma$.}
\label{fig:F'}
\end{figure}
Moreover, the specific form of the function $Q$ in
the energy functional \eqref{eq:energy} considered in the present paper is motivated by the discussion made  by Rosenau in \cite{Rosenau89,Rosenau90}.  In the theory of phase transitions, in order to include interaction due to high gradients, the author extends the Ginzburg--Landau free--energy functional \eqref{GL-intro} and considers a free--energy functional with $Q$ as in \eqref{eq:Q}, and so, with a linear growth rate with respect to the gradient.
We also mention here the papers  \cite{DPV20,FPS,FS,KurRos,KurRos2}, where the same nonlinear diffusion of mean curvature--type is also considered in different contexts. Finally, 
it is worth observing  that $0\leq Q(s) \leq s^2$ for any $s\in\R$ and therefore \eqref{eq:energy} is well--defined for $u\in H^1(-1,1)$. Nevertheless, we underline that the function $Q$ is convex, but fails to be coercive, and therefore, to the best of our knowledge,  there is no straightforward evidence of existence of minimizers for the functional \eqref{eq:energy}.

\begin{rem}
We point out that the explicit choice \eqref{eq:Q} for the function $Q$, which is the paradigmatic example given in \cite{Rosenau89,Rosenau90}, is made here for simplicity and readability of the paper. However, we claim that the main results contained in this article can be proved for a generic even function $Q\in C^2(\R)$ satisfying
\begin{equation}\label{eq:Q-generic}
    Q(0)=Q'(0)=0, \qquad \lim_{s\to\pm\infty} Q'(s)=\pm1, \qquad Q''(s)>0, \mbox{ for } s\in\R, \qquad 
    \lim_{s\to\pm\infty} Q''(s)=0.
\end{equation}
Notice that, under these assumptions, we readily obtain $0\leq Q(s) \leq C s^2$ for any $s\in\R$, so that the minimization problem under discussion here is still well defined.
Actually, we emphasize that the specific form of the nonlinear function $Q$ is used only in Sections \ref{sec:maxsol}-\ref{sec:proof}, and in particular in the explicit formulas \eqref{eq:P_eps}, \eqref{eq:I_n} and \eqref{eq:splitIn}-\eqref{eq:Rn}.
We sketch the argument to handle the more general case in Remarks \ref{rem:unique} and \ref{rem:generic-Q-4}.
\end{rem}

In the case $\e=0$,  the problem we are dealing with reduces to the 
minimization of
\begin{equation}\label{eq:energy-eps0}
	\int_{-1}^1F(u)\,dx,
\end{equation}
with the constraint \eqref{eq:constraint-intro}, which leads to the study of the auxiliary functional
\begin{equation}\label{eq:energy-aux}
	\int_{-1}^1\left [ F(u) - \sigma u\right]\,dx,
\end{equation}
where $\sigma$ stands for a (constant) Lagrange multiplier.
The function inside the above integral is referred to as 
 \emph{Gibbs function} \cite{CGS}
\begin{equation}\label{eq:Gibbs}
	\Phi_\sigma(z):=F(z)-\sigma z,
\end{equation}
and its   properties, depending on the assumptions \eqref{eq:hypF} on $F$, are crucial in the minimization problems listed above.
\begin{figure}
\centering
\includegraphics{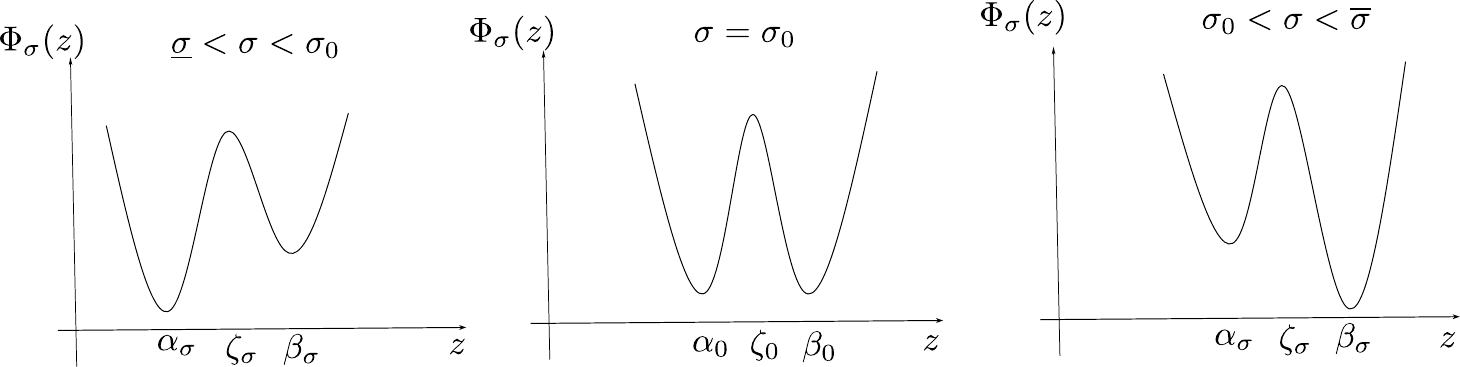}
\caption{Graph of the Gibbs function $\Phi_\sigma$ for different values of $\sigma$.}
\label{fig:CGS-upper}
\end{figure}
We depict in  Figure \ref{fig:CGS-upper} the graph of the Gibbs function for different values of $\sigma\in(\underline\sigma,\overline\sigma)$. The value $\sigma_0$ refers to the so--called \textbf{Maxwell point}, which can be obtained from the celebrated Maxwell construction, also 
known as \emph{equal area rule} \cite{Reichl}; see Figure \ref{fig:F'}.
This construction clearly leads to the following relations:
\begin{equation}\label{eq:ELWE}
\begin{aligned}
     F(\beta_0) - F(\alpha_0) = \sigma_0(\beta_0 - \alpha_0),\\
    \sigma_0 = F'(\alpha_0) = F'(\beta_0),
\end{aligned}
\end{equation}
or, in terms of the Gibbs function, $\Phi_{\sigma_0}(\alpha_0)=\Phi_{\sigma_0}(\beta_0)$.
Hence, the Maxwell point is defined by the following pair of parameters:
\begin{equation}\label{eq:Maxwell-p}
    \Delta_0:=(\sigma_0,b_0), \qquad \qquad \mbox{ with } \quad b_0:=\Phi_{\sigma_0}(\alpha_0)=\Phi_{\sigma_0}(\beta_0).
\end{equation}
Conditions \eqref{eq:ELWE} are related to the minimization problem for \eqref{eq:energy-aux}, for which  the associated Euler--Lagrange equation $F'(u) = \sigma$ should hold at points of continuity of $u$, while the function  $F(u) - \sigma u$   should be continuous  across jumps of $u$ (the \emph{Weierstrass--Erdmann corner condition}).
In addition, as pinpointed  in Figure \ref{fig:CGS-upper},  the Gibbs function for  $\sigma = \sigma_0$ becomes  a double well potential with wells $\alpha_0,\beta_0$ of equal depths.

At this stage, we can define the \emph{single--interface} solutions
\begin{equation}\label{eq:mineps0}
    u_0(x):=\begin{cases}
        \alpha_0, \qquad &-1\leq x\leq -1+\ell_1,\\
        \beta_0, & -1+\ell_1<x\leq1,
    \end{cases}
    \qquad \mbox{ and } \qquad u_0(-x),
\end{equation}
where
\begin{equation}\label{eq:ell}
    \ell_1:=\frac{2(\beta_0-r)}{\beta_0-\alpha_0}, \qquad \qquad 
    \ell_2:=\frac{2(r-\alpha_0)}{\beta_0-\alpha_0}.
\end{equation}
As proven in \cite{CGS}, this particular minimizer of \eqref{eq:energy-eps0} with the constraint \eqref{eq:constraint-intro} is the \emph{physically most relevant}, because, for $\e >0$ sufficiently small,  there exists a unique (modulo reversal) global minimizer of \eqref{GL-intro} with the same constraint; this minimizer is strictly monotone and it converges, as $\e\to 0$, to \eqref{eq:mineps0}. 
The present investigation will lead to the same conclusions concerning stationary points of the generalized energy functional \eqref{eq:energy}, again with the mass constraint \eqref{eq:constraint-intro}. In particular, we shall first characterize all possible smooth minimizers of this problem by solving the corresponding Euler--Lagrange equation with natural boundary conditions. Then, for $\e$ sufficiently small, we shall prove that, among them, there exists a unique (modulo reversal) solution with least energy, which turns out to be monotone and to converge to the same single--interface solutions \eqref{eq:mineps0} as $\e\to0^+$.

\subsection{Plan of the paper}
This paper is organized as follows.
In Section \ref{sec:GF} we connect our minimization problem with a Cahn--Hilliard--type equation, which is derived by using \eqref{eq:energy} instead of the classical Ginzburg--Landau functional \eqref{GL-intro}. Section \ref{sec:maxsol} is devoted to the characterization of all possible smooth minimizers of the functional \eqref{eq:energy} satisfying the mass constraint \eqref{eq:constraint-intro}. Moreover, we shall state in this section the theorem which identifies, for $\e $ small, 
the \textbf{Maxwell solution}, that is a monotone increasing solution of the Euler--Lagrange equation satisfying both the natural boundary conditions and the mass constraint \eqref{eq:constraint-intro}, see Theorem \ref{thm:Maxwell}. The existence of the Maxwell solution is then proved in the subsequent Section \ref{sec:proof}.
Finally, in Section \ref{sec:min} we prove that the Maxwell solution (and its reversal) is the stationary point of \eqref{eq:energy}-\eqref{eq:constraint-intro} with least energy, and it coincides with the unique minimizer of the problem, provided the latter exists.
\subsection{Outline of the main results}
Since our analysis is quite technical, we conclude this Introduction by outlining our main results, the strategy used to prove them, and  the main differences with respect to \cite{CGS}.

It is well known that any smooth stationary point of \eqref{eq:energy} must satisfy the corresponding Euler--Lagrange equation together with natural boundary conditions. In addition, we need to consider the mass constraint \eqref{eq:constraint-intro}, which leads to the problem \eqref{eq:min} in Section \ref{sec:maxsol}.
Then, in order to characterize all stationary points, we first study the Euler--Lagrange equation in the whole real line (hence, without the boundary conditions and the mass constraint). It turns out that there exist periodic solutions for any \textbf{admissible pair} $\Delta=(\sigma,b)$; see Section \ref{sec:maxsol} and specifically Proposition \ref{prop:periodicsol}. Let us underline that the proof of Proposition \ref{prop:periodicsol} is an important preliminary step of our analysis, which is not needed in \cite{CGS}.
Next, in Section \ref{sec:proof} we prove the main result of Section \ref{sec:maxsol}, Theorem \ref{thm:Maxwell}, which establishes the existence of a \emph{special} monotone solution of \eqref{eq:min} (called Maxwell solution) by showing that it is possible to chose the parameters $\sigma, b$ (near the Maxwell point) such that the restriction to the interval $[-1,1]$ of the periodic solution also satisfies the boundary conditions and the mass constraint. Hence, the Maxwell solution is a monotone stationary point of \eqref{eq:energy}-\eqref{eq:constraint-intro}.
The existence and uniqueness of the Maxwell solution is obtained by rewriting the problem
in an appropriate way (see \eqref{eq:I_0,1}) and then by using the Implicit Function Theorem.
To obtain our result, the main extra difficulty with respect to \cite{CGS} is to prove Lemma \ref{Lem:Rn}, which is needed here in view of to the presence  of the nonlinear function $Q$.

We claim that, using the same strategy, it is possible to show the existence of other solutions of \eqref{eq:min},
that is stationary points with an arbitrary number of transitions,
as well as other monotone solutions. However, we will not investigate the existence of these solutions in detail, because we shall see in Section \ref{sec:min} that they can not minimize the energy \eqref{eq:energy} with constraint \eqref{eq:constraint-intro}.
Indeed, we evaluate \eqref{eq:energy} at the Maxwell solution (Proposition \ref{prop:enery-Maxwell}) and show that it minimizes \eqref{eq:energy}-\eqref{eq:constraint-intro} along all  possible stationary points, see Propositions \ref{prop:comp-const}, \ref{prop:less} and \ref{prop:comp-nonmon}.
In particular, we shall prove that the energy \eqref{eq:energy} of the Maxwell solution has the asymptotic expansion $E_r+o(\e)$, when $\e$ is sufficiently small,
where $E_r$ is the minimum of \eqref{eq:energy-eps0}-\eqref{eq:constraint-intro}.
Finally, we show that the Maxwell solution and its reversal converge as $\e\to0^+$ to the single--interface solutions defined in \eqref{eq:mineps0}, see Theorem \ref{thm:eps-conv}.

Summarizing, the main result of this paper is that, if $r\in(\alpha_0,\beta_0)$, then
\begin{itemize}
    \item[(i)] when $\e>0$ is small enough, there exists a smooth strictly increasing stationary point $u_\e:=u_\e(x)$ of \eqref{eq:energy}-\eqref{eq:constraint-intro} and $u_\e$ and its reversal $u^R_\e:=u^R_\e(x)=u_\e(-x)$ are the stationary points with least energy;
    \item[(ii)] $E[u_\e]=E[u_\e^R]$ converges as $\e\to0^+$ to the minimum of \eqref{eq:energy-eps0}, that is the minimum energy in the case without diffusion;
    \item[(iii)] $u_\e$ and $u_\e^R$ converge as $\e\to0^+$ to the single--interface solutions defined in \eqref{eq:mineps0}, namely the two--phase solutions with least energy in the case without diffusion.
\end{itemize}

\section{Connections with Cahn--Hilliard models}\label{sec:GF}
As a possible motivation for our studies, in this section we derive the evolutionary equation related to the minimization problem under investigation in the paper. 
In particular, 
we show that a Cahn--Hilliard type equation with nonlinear diffusion can be derived from the energy functional \eqref{eq:energy}.

First, let us recall that the celebrated Cahn--Hilliard equation, which in the one--dimensional case reads as
\begin{equation}\label{eq:Ca-Hi}
	u_t=(-\e^2 u_{xx}+F'(u))_{xx}, \qquad \quad 
	x\in(a,b), \; t>0,
\end{equation} 
where $\e>0$ and $F:\R\to\R$ is a double well potential with wells of equal depth,
has been originally proposed in \cite{Cahn,CH} to model phase separation in a binary system at a fixed temperature, with constant
total density and where $u$ stands for the concentration of one of the two components.
Generally, \eqref{eq:Ca-Hi} is considered with homogeneous Neumann boundary conditions
\begin{equation}\label{eq:Neu}
	u_x(a,t)=u_x(b,t)=u_{xxx}(a,t)=u_{xxx}(b,t)=0, \qquad \qquad t>0,
\end{equation} 
which are physically relevant since they  
guarantee that the total mass is conserved.

Moreover, it is well-known \cite{Fife} that equation \eqref{eq:Ca-Hi} is the gradient 
flow in the zero-mean subspace of the dual of $H^1(a,b)$ of the Ginzburg--Landau functional \eqref{GL-intro}
and that the only {\it stable} stationary solutions to \eqref{eq:Ca-Hi}-\eqref{eq:Neu} are minimizers of  the energy \eqref{GL-intro} \cite{Zheng}. 
Therefore, thanks to the aforementioned work \cite{CGS}, we can state that solutions to \eqref{eq:Ca-Hi}-\eqref{eq:Neu} converge, as $t \to \infty$, to a limit which has at most a single transition inside the interval $[a,b]$.
It is also worth mentioning that the convergence to the monotone steady states is incredibly slow and if the initial profile has an $N$-transition layer structure, 
oscillating between the two minimal points of $F$, then the solution maintains the unstable structure for a time $T_\e=\mathcal{O}\left(\exp(c/\e)\right)$, as $\e\to0^+$, see \cite{AlikBateFusc91,Bates-Xun1,Bates-Xun2}.
This phenomenon is known in literature as \emph{metastable dynamics}, and it is studied in various articles and with different techniques for reaction--diffusion models, among others see \cite{FPS,FS} and references therein. The analysis of this property can be carried out also for equation \eqref{eq:Ca-Hi} with nonlinear diffusions, as in the case under investigations here, but it is not in the main aims of the present paper and it is left for future investigations; the interested reader can refer to  \cite{FLS2022} for the case of the $p$--Laplace operator.

Equation \eqref{eq:Ca-Hi} can be also seen as the simplest 1-$d$ case of a very general Cahn--Hilliard model introduced by Gurtin in \cite{Gurtin}, which reads as
\begin{equation}\label{eq:Gurtin-model}
	u_t=\mbox{div}\left\{D\nabla\left[-\mbox{div}[\partial_v {\Psi}(u,\nabla u)]+\partial_u{\Psi}(u,\nabla u)-\gamma\right]\right\}+m,
\end{equation}
where $D$ is a non constant \emph{mobility} (which may depends on $u$ and its derivatives),  $\Psi$ is the so-called \emph{free energy}, $\gamma$ is an \emph{external microforce} and $m$ is an \emph{external mass supply}, for further details see \cite{Gurtin}.
In particular, the standard Cahn--Hilliard equation \eqref{eq:Ca-Hi} corresponds to the one-dimensional version of \eqref{eq:Gurtin-model}, with the choices $D\equiv1$, $\gamma\equiv m\equiv0$ and the free energy
\begin{equation}\label{eq:free_en_st}
    \Psi(u,v):=\frac{\e^2}{2}|v|^2+F(u).
\end{equation}
If we consider a concentration--dependent mobility (cfr. \cite{Ca-El-NC} and references therein) $D(u) : \R\to \R^+$, $\gamma=m=0$ as in the standard case and the free energy
\begin{equation}\label{eq:free-energy}
	\Psi(u,v):=\e^{-2}Q (\e^2|v|)+F(u)=\e^{-2}\sqrt{1+\e^4|v|^2}-\e^{-2}+F(u),
\end{equation}
we end up with the following Cahn--Hilliard model with mean curvature type diffusion
\begin{equation}\label{eq:CH-model-Q}
	u_t=\mbox{div}\left\{D(u)\nabla\left[-\mbox{div}\left(Q'(\e^2|\nabla u|)\frac{\nabla u}{|\nabla u|}\right)+F'(u)\right]\right\}.
\end{equation}

For the sake of completeness, we recall here how one can derive the model \eqref{eq:CH-model-Q} from the (multi--$d$ version of the) functional \eqref{eq:energy}.
Let us consider the \emph{total (integrated) free energy} \cite{Gurtin}
\begin{equation*}
    \mathcal{E}[u]:=\int_\Omega\Psi(u,\nabla u)\,dx.
\end{equation*}
The formal variation with respect to fields $\phi$ that vanish on the boundary $\partial\Omega$ is given by
\begin{align*}
    \delta\mathcal{E}&=\left\langle\frac{\delta\mathcal{E}}{\delta u},\phi\right\rangle=\frac{d}{ds}\mathcal{E}[u+s\phi]\Bigg|_{s=0}=
    \frac{d}{ds}\int_\Omega\Psi(u+s\phi,\nabla u+s\nabla\phi)\,dx\Bigg|_{s=0}\\
    &=\int_\Omega\left[\Psi_u(u,\nabla u)\phi+\Psi_v(u,\nabla u)\cdot\nabla\phi\right]\,dx\\
    &=\int_\Omega\left[\Psi_u(u,\nabla u)-\mbox{div}\Psi_v(u,\nabla u)\right]\phi\,dx,
\end{align*}
where $\langle\cdot,\cdot\rangle$ denotes the $H^{-1}$, $H_0^1$ pairing. As a consequence, the variational derivative is given by
\begin{equation}\label{eq:var_der}
    \frac{\delta\mathcal{E}}{\delta u}= \Psi_u(u,\nabla u)-\mbox{div}\Psi_v(u,\nabla u).
\end{equation}
The standard Cahn--Hilliard equation is derived from
\begin{equation}\label{eq:CH-var}
    u_t=\mbox{div}\left[D(u)\nabla \frac{\delta \mathcal{E}}{\delta u}\right],
\end{equation}
where $D\equiv1$ and the free energy is given by \eqref{eq:free_en_st};
indeed, in this case one has
\begin{equation*}
    \Psi_u(u,\nabla u)=F'(u), \qquad \qquad \Psi_v(u,\nabla u)=\e^2\nabla u.
\end{equation*}
On the other hand, in the case of free energy defined as in \eqref{eq:free-energy}, one obtains the total free energy
\begin{equation}\label{eq:energy-multid}
    \mathcal{E}[u]:=\int_\Omega\left[\frac{Q(\e^2|\nabla u|)}{\e^{2}}+F(u)\right]\,dx,
\end{equation}
that is, the multi-dimensional version of \eqref{eq:energy}, and since
\begin{equation*}
    \Psi_u(u,\nabla u)=F'(u), \qquad \qquad \Psi_v(u,\nabla u)=Q'(\e^2\nabla u)\frac{\nabla u}{|\nabla u|},
\end{equation*}
substituting in \eqref{eq:var_der}-\eqref{eq:CH-var}, one deduces the model
\begin{equation}\label{eq:CH-model}
	u_t=\mbox{div}\left\{D(u)\nabla\left[-\mbox{div}\left(\frac{\e^2\nabla u}{\sqrt{1+\e^4|\nabla u|^2}}\right)+F'(u)\right]\right\},
\end{equation}
that is \eqref{eq:CH-model-Q} with the explicit formula $Q'(s)=\displaystyle\frac{s}{\sqrt{1+s^2}}$.

As an alternative viewpoint for the derivation of \eqref{eq:CH-model}, we recall the continuity equation for the concentration $u$
\begin{equation}\label{eq:cont}
		u_t+\mbox{div}J=0,
\end{equation}
where $J$ is its flux. 
The standard Cahn--Hilliard equation follows from \eqref{eq:cont} and the  constitutive equation
\begin{equation*}
    J=-\nabla\mu,
\end{equation*}
which relates the flux $J$ to the chemical potential \cite{Gurtin}
\begin{equation}\label{eq:flux}
	\mu=\frac{\delta\mathcal{E}}{\delta u}=-\e^2\Delta u+F'(u).
\end{equation}
On the other hand, by considering (again) the variational derivative \eqref{eq:var_der} with free energy \eqref{eq:free-energy} and a concentration dependent mobility, we obtain the flux
\begin{equation}\label{eq:flux2}
	J=-D(u)\nabla\mu, \qquad \mbox{ with } \qquad \mu=\frac{\delta\mathcal{E}}{\delta u}=\left\{-\mbox{div}\left(\frac{\e^2\nabla u}{\sqrt{1+\e^4|\nabla u|^2}}\right)+F'(u)\right\}.
\end{equation}
By substituting \eqref{eq:flux2} in  the continuity equation \eqref{eq:cont}, we end up with \eqref{eq:CH-model}.
We emphasize that there are two novelties in \eqref{eq:flux2}: the concentration--dependent  mobility and the mean curvature operator replace the constant mobility and the Laplacian, which are a peculiarity of the classical choice \eqref{eq:flux}.

Let us now show that the physically relevant (no-flux) boundary conditions for \eqref{eq:CH-model} are
\begin{equation}\label{eq:noflux}
    \nabla u\cdot\nu=0, \qquad  J\cdot\nu=0, \qquad \qquad \mbox{ on } \quad \partial\Omega,
\end{equation}
where $\nu$ is the unit normal vector to $\Omega$.
Indeed, the boundary conditions \eqref{eq:noflux} guarantee that the energy \eqref{eq:energy-multid} is a non-increasing function of time along the solutions to \eqref{eq:CH-model} and that these solutions preserve the mass.
Precisely, differentiating \eqref{eq:energy-multid} with respect to time and using the first condition in \eqref{eq:noflux}, we get
\begin{align*}
    \frac{d}{dt}\mathcal{E}[u](t)&=\int_\Omega\left[Q'(\e^2|\nabla u|)\frac{\nabla u}{|\nabla u|}\cdot \nabla u_t+F'(u)u_t\right]\,dx\\
    &=\int_\Omega\left\{-\mbox{div}\left[\frac{\e^2\nabla u}{\sqrt{1+\e^4|\nabla u|^2}}\right]+F'(u)\right\}u_t\,dx.
\end{align*}
Moreover, \eqref{eq:cont}-\eqref{eq:flux2} and the second condition in \eqref{eq:noflux} imply
\begin{align*}
    \frac{d}{dt}\mathcal{E}[u](t)&=-\int_\Omega\mu\,\mbox{div}J\,dx =-\int_{\partial\Omega}\mu\,J\cdot\nu\,d\sigma+\int_\Omega\nabla\mu\cdot J\,dx\\
    &=-\int_\Omega D(u)|\nabla\mu|^2\,dx\leq 0,     \qquad \qquad \mbox{ for }\, t>0,
\end{align*}
where we used the positivity of the function $D$.
Finally, by using \eqref{eq:cont} and \eqref{eq:noflux} we infer
\begin{equation*}
    \frac{d}{dt}\int_\Omega u\,dx=\int_\Omega u_t\,dx=-\int_\Omega\mbox{div}J\,dx=-\int_{\partial\Omega}J\cdot\nu\,d\sigma=0,
\end{equation*}
for any $t>0$.

In the one-dimensional case, we expect that the aforementioned results of \cite{Zheng}, valid for \eqref{eq:Ca-Hi}-\eqref{eq:Neu}, can be extended to the boundary value problem \eqref{eq:CH-model}-\eqref{eq:noflux}, that is its solutions converge as $t\to+\infty$ to 
the minimizers of \eqref{eq:energy}-\eqref{eq:constraint-intro}. 
In the following sections, we rigorously prove that, if $\e>0$ is sufficiently small, the stationary points of \eqref{eq:energy}-\eqref{eq:constraint-intro} with least energy are monotone and, as a consequence, we expect that any solution of \eqref{eq:CH-model}-\eqref{eq:noflux} will eventually have at most one transition.
However, inspired by the classical results \cite{AlikBateFusc91,Bates-Xun1,Bates-Xun2} and using the same strategy of \cite{FLS2022,FPS,FS}, 
we are confident that, in the one-dimensional-case, there exist \emph{metastable solutions} of \eqref{eq:CH-model}-\eqref{eq:noflux}, which maintain an unstable structure with an arbitrary number of transitions for an extremely long time.
Up to our knowledge, these results concerning the asymptotic behavior of the solutions are an open problem, which needs further investigation.

\section{Maxwell solution}\label{sec:maxsol}
As it was already mentioned, the main goal of this paper is to minimize the functional
\begin{equation*}
	E[u]=\int_{-1}^1\left[\frac{Q(\e^2 u')}{\e^2}+F(u)\right]\,dx,
\end{equation*}
over all $u\in H^1(-1,1)$, $u>0$, such that
\begin{equation}\label{eq:constraint}
	\int_{-1}^1 u(x)\,dx=2r.
\end{equation}
A classical result of calculus of variations asserts that all smooth stationary points $u=u(x)$ satisfy the Euler--Lagrange equation
$$\frac{d}{dx}\left[Q'(\e^2u')\right]-F'(u)=0,\qquad \qquad \mbox{ in } (-1,1),$$
with homogeneous Neumann boundary conditions $u'(\pm1)=0$.
Expanding the derivative, imposing the constraint \eqref{eq:constraint} and introducing the function
$$z(x)=u(\e x), \qquad \mbox{ or equivalently } \qquad u(x)=z(x/\e),$$
we obtain the problem
\begin{equation}\label{eq:min}
	\begin{cases}
		Q''(\e z')z''=F'(z)-\sigma, \qquad  \mbox{ in } (-\e^{-1},\e^{-1}),\\
		z'(\pm\e^{-1})=0,\\		
		\displaystyle\int_{-\e^{-1}}^{\e^{-1}}z(x)\,dx=2r\e^{-1},
	\end{cases}
\end{equation}
where the constant $\sigma$ is a Lagrange multiplier.
In order to determine all solutions of \eqref{eq:min}, we multiply by $z'$ the ordinary differential equation in \eqref{eq:min} and study the equation 
\begin{equation}\label{eq:min-firstode}
	P_\e(z')=\Phi_\sigma(z)-b,
\end{equation}
in the whole real line,
where $b$ is a constant and we introduced the function 
\begin{equation}\label{eq:P_eps}
	P_\e(s):=\int_0^s Q''(\e u)u\,du=\e^{-1}Q'(\e s)s-\e^{-2}Q(\e s)=\e^{-2}\left(1-\frac{1}{\sqrt{1+\e^2s^2}}\right),
\end{equation}
and the Gibbs function $\Phi_\sigma$, depicted in Figure \ref{fig:CGS-upper}, is defined in \eqref{eq:Gibbs}.
As pointed out in \cite{CGS} (see Propositions 2.1 and 4.1), we briefly recall here that for each $\sigma\in(\underline\sigma,\overline\sigma)$:
\begin{enumerate}
\item $\Phi_\sigma$ has exactly three critical points, namely $\Phi'_\sigma(\alpha_\sigma)=\Phi'_\sigma(\zeta_\sigma)=\Phi'_\sigma(\beta_\sigma)=0$, with $0<\underline\alpha<\alpha_\sigma<\zeta_\sigma<\beta_\sigma<\overline\beta$; 
\item $\alpha_\sigma$, $\zeta_\sigma$, and $\beta_\sigma$, as functions of $\sigma$, are continuous on $[\underline\sigma,\overline\sigma]$, $C^4$ on $(\underline\sigma,\overline\sigma)$, with $\alpha_\sigma$, $\beta_\sigma$ strictly increasing, and $\zeta_\sigma$ strictly decreasing;
\item $\Phi_\sigma$ is strictly decreasing on $(0,\alpha_\sigma)\cup(\zeta_\sigma,\beta_\sigma)$ and strictly increasing on $(\alpha_\sigma,\zeta_\sigma)\cup(\beta_\sigma,+\infty)$;
\item $\Phi_\sigma(\beta_\sigma)>\Phi_\sigma(\alpha_\sigma)$ for $\sigma<\sigma_0$, $\Phi_\sigma(\alpha_\sigma)>\Phi_\sigma(\beta_\sigma)$ for $\sigma>\sigma_0$, and
$\Phi_{\sigma_0}(\alpha_0)=\Phi_{\sigma_0}(\beta_0)$, where $\alpha_0:= \alpha_{\sigma_{0}}$, $\beta_0:= \beta_{\sigma_{0}}$, $\zeta_0:= \zeta_{\sigma_{0}}$;
\end{enumerate}
see also Figure \ref{fig:F'}, where in particular the behavior of the critical points  $\alpha_\sigma$, $\zeta_\sigma$, and $\beta_\sigma$, summarized in points (1) and (2), is manifest.
These properties are instrumental in the study of the behavior of the solutions to the ODE
\begin{equation}\label{eq:ODE-CGS}
    (z')^2=\Phi_\sigma(z)-b,
\end{equation}
that is the ODE satisfied by the minimizers in the case of the classical Ginzburg--Landau functional \eqref{GL-intro}
(the ODE can be obtained by \eqref{eq:min-firstode}-\eqref{eq:P_eps} in the case $Q(s)=s^2$). 
\begin{figure}
\centering
\includegraphics{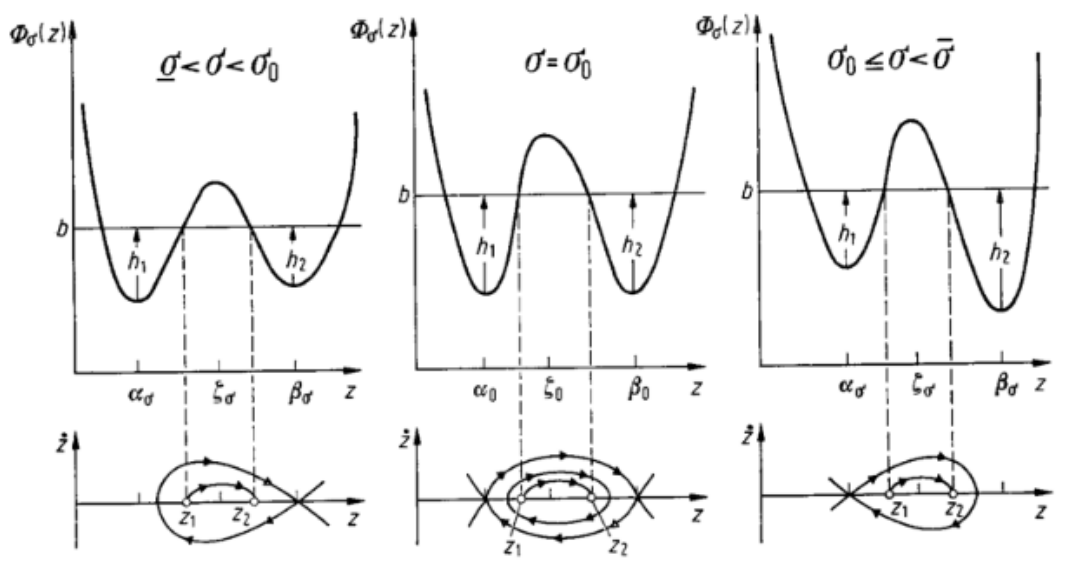}
\caption{Gibbs function and phase portraits for different values of $\sigma\in(\underline\sigma,\overline\sigma)$.
This figure was produced in \cite{CGS}.}
\label{fig:CGS}
\end{figure}
In particular, it is possible to choose the constants $\sigma$, $b$  such that there exist periodic solutions of \eqref{eq:ODE-CGS}, see the phase portraits in Figure \ref{fig:CGS}.
To be more precise, $\sigma$ has to belong to the interval $(\underline\sigma,\overline\sigma)$ and $b$ must satisfy
\begin{equation*}
	\begin{aligned}
	\Phi_\sigma(\beta_\sigma)<b<\Phi_\sigma(\zeta_\sigma), \qquad \mbox{ for } \quad \sigma\in(\underline\sigma,\sigma_0],\\
	\Phi_\sigma(\alpha_\sigma)<b<\Phi_\sigma(\zeta_\sigma), \qquad \mbox{ for } \quad \sigma\in[\sigma_0,\overline\sigma),
\end{aligned}
\end{equation*}
or, equivalently
\begin{equation}\label{eq:b}
    \max \{\Phi_\sigma(\alpha_\sigma), \Phi_\sigma(\beta_\sigma)\}<b<\Phi_\sigma(\zeta_\sigma),
\end{equation}
for any $\sigma\in[\underline\sigma,\overline\sigma]$, in view of point (4) above.
We refer to such a pair $\Delta=(\sigma,b)$ as \textbf{admissible}.
It is worth observing here that the pair $\Delta_0$, corresponding to the Maxwell point defined in \eqref{eq:Maxwell-p}, is not an admissible pair. Indeed, notice that $\Phi_{\sigma_0}$ is a double well potential with wells $\alpha_0,\beta_0$ of equal depths, and, as a consequence, the choice $b_0=\Phi_{\sigma_0}(\alpha_0)=\Phi_{\sigma_0}(\beta_0)$ corresponds to the heteroclinic orbit connecting the two wells $\alpha_0$ and $\beta_0$, see Figure \ref{fig:CGS}.
Hence, the corresponding solution never satisfies the Neumann boundary condition in \eqref{eq:min}.

On the other hand, to any admissible pair $\Delta$ corresponds a periodic solution to \eqref{eq:ODE-CGS};
our goal is to extend the latter result to equation \eqref{eq:min-firstode}.
To this aim, we rewrite this equation as 
\begin{equation}\label{eq:f_delta}
		P_\e(z')= \Phi_\sigma(z)-b =: f_\Delta(z).
\end{equation}
The crucial properties of the function $P_\e$ defined in \eqref{eq:P_eps}, in particular its invertibility, needed to investigate equation \eqref{eq:min-firstode} (or \eqref{eq:f_delta}) are consequences of the bounds for $f_\Delta(z)$ collected in the next lemma.
\begin{lem}\label{lem:fdelta}
For any $z\in[\underline\alpha, \overline\beta]$ and for any $\Delta$ admissible (hence, satisfying \eqref{eq:b}), the function $f_\Delta(z)$ defined in \eqref{eq:f_delta} satisfies
\begin{equation}\label{eq:fdelta_bounds}
  0\leq f_\Delta(z)\leq  \Phi_{\sigma_0}(\zeta_{0})- \Phi_{\sigma_0}(\beta_0).
\end{equation}
\end{lem}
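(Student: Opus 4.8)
The plan is to reduce both inequalities to the qualitative properties (1)--(4) of the Gibbs function $\Phi_\sigma$ recalled just before the statement; note that $f_\Delta=\Phi_\sigma-b$ does not involve $Q$, so the specific form of the nonlinear diffusion plays no role here. The key preliminary observation is that admissibility localizes the relevant values of $z$. Indeed, since $\max\{\Phi_\sigma(\alpha_\sigma),\Phi_\sigma(\beta_\sigma)\}<b<\Phi_\sigma(\zeta_\sigma)$ and $\Phi_\sigma$ is strictly increasing on $(\alpha_\sigma,\zeta_\sigma)$ and strictly decreasing on $(\zeta_\sigma,\beta_\sigma)$, the equation $\Phi_\sigma(z)=b$ has exactly one solution $z_-\in(\alpha_\sigma,\zeta_\sigma)$ and one solution $z_+\in(\zeta_\sigma,\beta_\sigma)$, with $\Phi_\sigma(z)\ge b$ precisely for $z\in[z_-,z_+]$. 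This interval, which is the range of the orbit oscillating around the center $\zeta_\sigma$ (equivalently, the range of the solution $z(x)$ we are after), is contained in $(\alpha_\sigma,\beta_\sigma)\subset(\underline\alpha,\overline\beta)$, and it is on this effective domain that the two bounds in \eqref{eq:fdelta_bounds} are to be understood.

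Granting this, the lower bound $f_\Delta(z)=\Phi_\sigma(z)-b\ge0$ is immediate on $[z_-,z_+]$, with equality only at the turning points $z_\pm$. For the upper bound, the maximum of $\Phi_\sigma$ on $[z_-,z_+]$ is attained at the interior critical point $\zeta_\sigma$, so that $f_\Delta(z)\le\Phi_\sigma(\zeta_\sigma)-b$; invoking admissibility once more gives
\[
f_\Delta(z)\le\Phi_\sigma(\zeta_\sigma)-b<\Phi_\sigma(\zeta_\sigma)-\max\{\Phi_\sigma(\alpha_\sigma),\Phi_\sigma(\beta_\sigma)\}=:g(\sigma).
\]
It therefore suffices to prove that $g(\sigma)\le g(\sigma_0)=\Phi_{\sigma_0}(\zeta_0)-\Phi_{\sigma_0}(\beta_0)$ for all $\sigma\in[\underline\sigma,\overline\sigma]$, where the identification of $g(\sigma_0)$ uses property (4), namely $\Phi_{\sigma_0}(\alpha_0)=\Phi_{\sigma_0}(\beta_0)$.

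The core of the argument is thus to maximize $g$ over $\sigma$. I would compute $g'$ by means of the envelope identity: since $\partial_\sigma\Phi_\sigma(z)=-z$ and $\Phi_\sigma'$ vanishes at each of $\alpha_\sigma,\zeta_\sigma,\beta_\sigma$, differentiating $\sigma\mapsto\Phi_\sigma(\beta_\sigma)$ yields $\frac{d}{d\sigma}\Phi_\sigma(\beta_\sigma)=-\beta_\sigma$, and analogously $-\alpha_\sigma$ and $-\zeta_\sigma$ for the other two critical values. By property (4) the maximum defining $g$ equals $\Phi_\sigma(\beta_\sigma)$ for $\sigma\le\sigma_0$ and $\Phi_\sigma(\alpha_\sigma)$ for $\sigma\ge\sigma_0$, whence $g'(\sigma)=\beta_\sigma-\zeta_\sigma>0$ on $(\underline\sigma,\sigma_0)$ and $g'(\sigma)=\alpha_\sigma-\zeta_\sigma<0$ on $(\sigma_0,\overline\sigma)$, using the ordering $\alpha_\sigma<\zeta_\sigma<\beta_\sigma$ from property (1). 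Hence $g$ strictly increases up to $\sigma_0$ and strictly decreases afterwards, so it attains its global maximum at $\sigma_0$, which is the desired estimate.

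The only delicate point, and the step I expect to require care, is this differentiation: it relies on the $C^4$ regularity of the branches $\sigma\mapsto\alpha_\sigma,\zeta_\sigma,\beta_\sigma$ on $(\underline\sigma,\overline\sigma)$ (property (2)) to justify the envelope computation, and on the continuity of $g$ at the corner $\sigma_0$ so that the two one-sided monotonicities combine into a genuine global maximum on the closed interval $[\underline\sigma,\overline\sigma]$. All remaining steps are elementary consequences of (1)--(4).
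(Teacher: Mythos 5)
Your proof is correct and follows essentially the same route as the paper: bound $f_\Delta(z)\le\Phi_\sigma(\zeta_\sigma)-b< g(\sigma)$ and then maximize $g$ over $\sigma$ via the envelope identity $\frac{d}{d\sigma}\Phi_\sigma(\xi_\sigma)=-\xi_\sigma$, concluding that the maximum is attained at the Maxwell point. Your preliminary localization of $z$ to $[z_-,z_+]$ is a sensible clarification of the lower bound, which the paper dismisses as "trivially verified" even though $f_\Delta$ is negative near $\alpha_\sigma$ and $\beta_\sigma$; otherwise the two arguments coincide.
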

\begin{proof}
We start by observing that condition \eqref{eq:b} in particular guarantees $f_\Delta(z)\geq0$ for any admissible pair $\Delta$ and the first inequality in \eqref{eq:fdelta_bounds} is trivially verified.

Moreover, to estimate the  maximum of $f_\Delta(z)$, we first notice  that the definition \eqref{eq:Gibbs} of $\Phi_\sigma$  and properties (1) and (2) imply for any $\sigma\in(\underline\sigma,\overline\sigma)$
\begin{equation}\label{eq:punticritici}
    \frac{d}{d\sigma}\Phi_\sigma(\xi_\sigma)= (F'(\xi_\sigma) - \sigma) \frac{d}{d\sigma}\xi_\sigma -\xi_\sigma = -\xi_\sigma,
\end{equation}
for any critical point $\xi_\sigma=\alpha_\sigma, \zeta_\sigma, \beta_\sigma$. Then, from \eqref{eq:b} we readily have
\begin{equation*}
    f_\Delta(z)\leq\Phi_\sigma(\zeta_\sigma)-\max\left\{ \Phi_\sigma(\alpha_\sigma),\Phi_\sigma(\beta_\sigma)\right\}=:g(\sigma),
\end{equation*}
for any admissible pair $\Delta$ and any $z\in[\underline\alpha, \overline\beta]$.
To be more precise, we can rewrite
\begin{equation*}
    g(\sigma)=\begin{cases}
    \Phi_\sigma(\zeta_\sigma)- \Phi_\sigma(\beta_\sigma), \qquad \qquad & \sigma\in[\underline\sigma,\sigma_0],\\
    \Phi_\sigma(\zeta_\sigma)-\Phi_\sigma(\alpha_\sigma), & \sigma\in[\sigma_0,\overline\sigma].
    \end{cases}
\end{equation*}
Then, from \eqref{eq:punticritici} it follows that
\begin{equation*}
    g'(\sigma)=\begin{cases}
    -\zeta_\sigma+\beta_\sigma, \qquad \qquad & \sigma\in(\underline\sigma,\sigma_0),\\
    -\zeta_\sigma+\alpha_\sigma, &\sigma\in(\sigma_0,\overline\sigma),
    \end{cases}
\end{equation*}
and, using again (1), we conclude that $g'(\sigma)>0$ for $\sigma\in(\underline\sigma,\sigma_0)$ and $g'(\sigma)<0$ for $\sigma\in(\sigma_0,\overline\sigma)$.
Hence, since from (4) $g(\underline\sigma)=g(\overline\sigma)=0$ and $g(\sigma_0)=\Phi_{\sigma_0}(\zeta_{0})- \Phi_{\sigma_0}(\beta_0)=\Phi_{\sigma_0}(\zeta_{0})- \Phi_{\sigma_0}(\alpha_0)$,
the function $f_\Delta(z)$ defined in \eqref{eq:f_delta} satisfies \eqref{eq:fdelta_bounds} 
for any $z\in[\underline\alpha, \overline\beta]$ and for any $\Delta$ admissible.
\end{proof}

The upper bound proved in Lemma \ref{lem:fdelta}, depending solely on the potential $F$,  justifies the following definition, which will be used in the sequel:
\begin{equation}\label{eq:eps-small-final}
  \overline F : =  \big [\Phi_{\sigma_0}(\zeta_{0})- \Phi_{\sigma_0}(\beta_0)\big]^{-1/2}.
\end{equation}
\begin{prop}\label{prop:periodicsol}
Assume $F$ satisfies \eqref{eq:hypF} and let $P_\e$ be defined by \eqref{eq:P_eps}.
Moreover, assume $ \e \in (0,\overline{F})$, where $\overline{F}$ is defined in \eqref{eq:eps-small-final}.
Then, for any  $\Delta=(\sigma,b)$  admissible,  equation \eqref{eq:f_delta} admits periodic solutions. 
\end{prop}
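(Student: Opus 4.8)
The plan is to reduce the second-order problem to a first-order autonomous equation by means of the first integral \eqref{eq:f_delta}, to invert $P_\e$, and then to read off a closed orbit from the phase portrait, exactly as for the classical equation \eqref{eq:ODE-CGS} but with the additional care needed to invert $P_\e$. First I would record the structural properties of $P_\e$ defined in \eqref{eq:P_eps}. From the explicit formula $P_\e(s)=\e^{-2}\bigl(1-(1+\e^2s^2)^{-1/2}\bigr)$ one sees that $P_\e$ is even and smooth, that $P_\e(0)=0$, that it is strictly increasing on $[0,\infty)$ (indeed $P_\e'(s)=s(1+\e^2s^2)^{-3/2}>0$ for $s>0$), and that it has range $[0,\e^{-2})$, with $P_\e(s)\to\e^{-2}$ as $s\to+\infty$. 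Hence $P_\e\colon[0,\infty)\to[0,\e^{-2})$ is a $C^1$ diffeomorphism, whose inverse I denote by $P_\e^{-1}$. Moreover, the expansion $P_\e(s)=\tfrac12 s^2+o(s^2)$ as $s\to0$ yields $P_\e^{-1}(w)\sim\sqrt{2w}$ as $w\to0^+$, a fact I will use to control the period.

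Next I would use Lemma \ref{lem:fdelta} together with the smallness assumption to make the inversion licit. By that lemma, for every admissible pair $\Delta$ and every $z\in[\underline\alpha,\overline\beta]$ one has $0\le f_\Delta(z)\le\overline F^{-2}$, where $\overline F$ is defined in \eqref{eq:eps-small-final}; since $\e<\overline F$ forces $\e^{-2}>\overline F^{-2}$, the values of $f_\Delta$ lie strictly inside the range $[0,\e^{-2})$ of $P_\e$. Consequently, wherever $f_\Delta(z)\ge0$, equation \eqref{eq:f_delta} can be rewritten as the first-order autonomous equation $z'=\pm P_\e^{-1}\bigl(f_\Delta(z)\bigr)$. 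I expect this range control to be the main obstacle, and the sole genuinely new point with respect to \cite{CGS}: there the choice $P_\e(s)=s^2$ is a global bijection onto $[0,\infty)$, whereas here the horizontal asymptote of $P_\e$ could \emph{a priori} be exceeded by $f_\Delta$, and it is exactly the combination of Lemma \ref{lem:fdelta} with $\e<\overline F$ that rules this out.

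It then remains to produce a closed orbit. Fixing an admissible pair $\Delta=(\sigma,b)$ and invoking properties (1)--(4) of $\Phi_\sigma$, the admissibility condition \eqref{eq:b} guarantees that $\Phi_\sigma(z)=b$ has two roots $z_-\in(\alpha_\sigma,\zeta_\sigma)$ and $z_+\in(\zeta_\sigma,\beta_\sigma)$, with $f_\Delta>0$ on $(z_-,z_+)$ and $f_\Delta(z_\mp)=0$; these zeros are simple, since $f_\Delta'(z_-)=\Phi_\sigma'(z_-)>0$ and $f_\Delta'(z_+)=\Phi_\sigma'(z_+)<0$ by property (3). Following the upper branch $z'=+P_\e^{-1}(f_\Delta(z))$ from $(z_-,0)$ up to $(z_+,0)$ and then the lower branch back traces a closed curve in the $(z,z')$-plane, precisely as in the phase portraits of Figure \ref{fig:CGS}. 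To confirm that this curve is traversed in finite time, and hence corresponds to a genuine periodic solution, I would verify that the period
\[
T=2\int_{z_-}^{z_+}\frac{dz}{P_\e^{-1}\bigl(f_\Delta(z)\bigr)}
\]
is finite: near $z_\mp$ one has $f_\Delta(z)\sim|f_\Delta'(z_\mp)|\,|z-z_\mp|$ and $P_\e^{-1}(w)\sim\sqrt{2w}$, so the integrand has an integrable $|z-z_\mp|^{-1/2}$ singularity at each endpoint. Extending this orbit $T$-periodically furnishes the required periodic solution of \eqref{eq:f_delta}; once the range control of the second step is in hand, the construction of the orbit and the finiteness of the period are entirely analogous to the classical setting.
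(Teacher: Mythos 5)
Your proposal is correct and follows essentially the same route as the paper: use the bound of Lemma \ref{lem:fdelta} together with $\e<\overline F$ to keep $f_\Delta$ inside the range $[0,\e^{-2})$ of $P_\e$, invert $P_\e$ to get the first--order equations for the two branches, and verify that the period integral converges because $f_\Delta$ has simple zeros at the turning points ($\Phi'_\sigma(z_1)>0$, $\Phi'_\sigma(z_2)<0$) while $P_\e^{-1}(w)\sim\sqrt{2w}$ near $w=0$. The only cosmetic difference is that the paper works with the explicit inverses $H^{\pm}_\e$ from \eqref{eq:defP^-1} rather than an abstract $P_\e^{-1}$, which changes nothing in the argument.
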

\begin{proof}
We first notice that $P_\e$, defined in \eqref{eq:P_eps}, is an even function satisfying 
\begin{equation*}
	P_\e(0)=0, \qquad P_\e'(s)s>0, \quad s\neq0, \qquad \lim_{s\to\pm\infty}P_\e(s)=\e^{-2}.
\end{equation*}
Therefore, $P_\e(s) = \xi$ is invertible for $s\gtrless 0$ with inverses $H_\e^\pm(\xi)$ given  by
\begin{equation}\label{eq:defP^-1}
    H^{\pm}_\e(\xi):= \pm\frac{\sqrt{\xi(2-\e^2\xi)}}{1-\e^2\xi},
\end{equation}
provided $0<\xi<\e^{-2}$. 
Hence, thanks to the bounds contained in Lemma \ref{lem:fdelta}, for any $\Delta$ admissible, for any $z$ under consideration, and choosing  $0<\e<\overline F$, we can invert the function $P_\e(f_\Delta(z))$ in \eqref{eq:f_delta}. 
More precisely, this equation is equivalent to
\begin{equation}\label{eq:z'=+}
    z'=H^+_\e\left(\Phi_\sigma(z)-b)\right),
\end{equation}
respectively
\begin{equation}\label{eq:z'=-}
    z'=H^-_\e\left(\Phi_\sigma(z)-b)\right),
\end{equation}
for monotone increasing, respectively decreasing, solutions.
The solutions of \eqref{eq:z'=+}-\eqref{eq:z'=-} are implicitly defined by
\begin{equation}\label{eq:implicit-z}
    \int_{z(0)}^{z(x)}\frac{ds}{H^\pm_\e\left(\Phi_\sigma(s)-b\right)}=x.
\end{equation}
At this point, let us fix $z_1:=z_1(\Delta)$ and $z_2:=z_2(\Delta)$ satisfying 
\begin{equation*}
    z_1<z_2, \qquad \qquad \Phi_\sigma(z_1)=\Phi_\sigma(z_2)=b.
\end{equation*}
We claim that, if $\Delta=(\sigma,b)$ is admissible, then 
\begin{equation}\label{eq:finite}
    \int_{z_1}^{z_2}\frac{ds}{H_\e^{+}\left(\Phi_\sigma(s)-b\right)}<+\infty
\end{equation}
ensures that there exist periodic solutions oscillating between $z_1$ and $z_2$. 
For definiteness, assume $z(0)=z_1$;
if \eqref{eq:finite} holds true, then the function $z$ implicitly defined by \eqref{eq:implicit-z} is monotone increasing, $z(T(\Delta))=z_2$ and satisfies \eqref{eq:f_delta} in the interval $[0,T(\Delta)]$, where
\begin{equation}\label{eq:tdelta}
    T(\Delta):= \int_{z_1}^{z_2}\frac{ds}{H_\e^{+}\left(\Phi_\sigma(s)-b\right)}<+\infty.
\end{equation}
Similarly, taking the inverse $H_\e^- = - H_\e^+ $, we can extend the solution in the interval $[T(\Delta),2T(\Delta)]$.
In particular, $z$ is monotone decreasing in $[T(\Delta),2T(\Delta)]$,
\begin{equation*}
    T(\Delta)=\int_{z_2}^{z_1}\frac{ds}{H_\e^{-}\left(\Phi_\sigma(s)-b\right)}<+\infty,
\end{equation*}
and $z(2T(\Delta))=z_1$.
Then, we can extend the solution to the whole real line by $2T(\Delta)$--periodicity.

We are left with the proof of \eqref{eq:finite}, namely, of the boundedness of the improper integral defining $T(\Delta)$. 
Since $H^+_\e(s)=\sqrt{2s}+o(s)$, as $s\to0^+$, using the definition \eqref{eq:f_delta}, we deduce
\begin{equation*}
     \int_{z_1}^{z_2}\frac{ds}{H_\e^+\left(\Phi_\sigma(s)-b\right)}=
     \int_{z_1}^{z_2}\frac{ds}{H_\e^+\left(f_\Delta(s)\right)}\sim\int_{z_1}^{z_2}\frac{ds}{\sqrt{f_\Delta(s)}}.
\end{equation*}
However, $f_\Delta(z_1)=f_\Delta(z_2)=0$ and we can expand $f_\Delta$ close to $z_i$, $i=1,2$ to obtain
\begin{align*}
    f_\Delta(s)&=(F'(z_1)-\sigma)(s-z_1)+o(|s-z_1|), \qquad s\sim z_1,\\
    f_\Delta(s)&=(F'(z_2)-\sigma)(s-z_2)+o(|s-z_2|), \qquad s\sim z_2.
\end{align*}
Finally, $F'(z_i)-\sigma\neq0$ for $i=1,2$ because $z_i$ are not critical points of the Gibbs function $\Phi_\sigma$, and in particular $\Phi'_\sigma(z_1)>0$ and $\Phi'_\sigma(z_2)<0$.
As a consequence, $T(\Delta)$ is bounded, being
\begin{equation*}
    \int_{z_1}^{z_2}\frac{ds}{\sqrt{f_\Delta(s)}}< + \infty,
\end{equation*}
and the proof is complete.
\end{proof}
\begin{rem}\label{rem:unique}
Proposition \ref{prop:periodicsol} guarantees the existence of periodic solutions in the whole real line, which is instrumental in constructing particular solutions of \eqref{eq:min}, as we shall see below. In addition, we emphasize that, being $P_\e (s) = 0$ if and only if $s=0$, the phase portrait of equation \eqref{eq:f_delta} is (qualitatively) the same of the one depicted in Figure \ref{fig:CGS}, and therefore all possible solutions of \eqref{eq:min} are constants or restriction of periodic solutions in the real line, with half period $T(\Delta)$ defined in \eqref{eq:tdelta} appropriately chosen to fulfill the boundary conditions, that is 
$n T(\Delta)=2\e^{-1}$, $n\in\mathbb{N}$.  In particular, if  $T(\Delta) = 2\e^{-1}$, then we obtain a monotone solution of \eqref{eq:min}, while for $n\geq 2$ we obtain all other possible (non constant and non monotone) solutions.

Finally, we emphasize that the crucial property needed   in the proof of Proposition \ref{prop:periodicsol} is the bound \eqref{eq:finite}. In the present case,  we prove it taking advantage of the  given formula \eqref{eq:Q} for $Q$ to compute $P_\e$ and its inverses $H^{\pm}_\e$ given by \eqref{eq:defP^-1} explicitly.
Hence, to extend the existence result to the case of a generic $Q$ satisfying \eqref{eq:Q-generic}, it is sufficient to prove that $P_\e$ has inverses which satisfy $H^{\pm}_\e(\xi)\approx\e^{-1}\sqrt{\xi}$, when $\xi\sim0$, which clearly implies \eqref{eq:finite}.
The aforementioned behavior close to zero of $H^{\pm}_\e(\xi)$ can be proved proceeding as in \cite[Lemma 2.1]{DFS}.
\end{rem}

The next goal is to prove that we can choose an admissible $\Delta$ such that $T(\Delta) = 2\e^{-1}$, namely the existence of monotone solutions to \eqref{eq:min}.
For definiteness, we focus our attentions to (non--constant) increasing solutions of \eqref{eq:min}, referred to as \textbf{simple solutions}, following  \cite{CGS}.
Specifically, in order to select such $\Delta$, we need to find conditions on $\sigma$ and $b$ such that the restriction in $(-\e^{-1},\e^{-1})$ of the corresponding periodic solution satisfies the boundary conditions and the mass constraint in \eqref{eq:min}.
In particular, the boundary conditions imply $\Phi_\sigma(z(\pm\e^{-1}))=b$ or, equivalently,
\begin{equation*}
	\Phi_\sigma(z_1)=\Phi_\sigma(z_2)=b,
\end{equation*}
where 
\begin{equation}\label{eq:boundary-z}
	z_1=z(-\e^{-1}), \qquad \mbox{ and } \qquad z_2=z(\e^{-1}).
\end{equation}
Both conditions can be recast as 
\begin{equation}\label{eq:I_0,1}
\begin{aligned}
    I_0(\Delta;\e) &= \int_{-\e^{-1}}^{\e^{-1}} \,dx = 2\e^{-1},\\
    I_1(\Delta;\e)&=\int_{-\e^{-1}}^{\e^{-1}} z(x)\,dx=2r\e^{-1},
\end{aligned}
\end{equation}
where we use the notation
\begin{equation*}
    I_n(\Delta;\e) : =	\int_{-\e^{-1}}^{\e^{-1}}z(x)^n\,dx.
\end{equation*}
In other words, the existence of a simple solution to \eqref{eq:min} is equivalent to find $\Delta$ admissible which solves system \eqref{eq:I_0,1}.
To this aim, since a simple solution satisfies $z'(x)>0$ for any $x\in(-\e^{-1},\e^{-1})$, we take advantage of \eqref{eq:boundary-z} and \eqref{eq:f_delta} to convert the above integral
into one of the form
\begin{equation}\label{eq:I_n}
	I_n(\Delta;\e)=\int_{z_1(\Delta)}^{z_2(\Delta)}\frac{z^n}{H_\e^+(f_\Delta(z))}dz=\int_{z_1(\Delta)}^{z_2(\Delta)}\frac{(1-\e^2f_\Delta(z))z^n}{\sqrt{f_\Delta(z)(2-\e^2f_\Delta(z))}}dz,
\end{equation}
where in the last passage we substitute the formula for $H^+_\e$ in \eqref{eq:defP^-1}.
The above rewriting of the integrals $I_n(\Delta)$ is instrumental in solving system \eqref{eq:I_0,1}, which then leads to the main result of this section.
\begin{thm}\label{thm:Maxwell}
For any $\delta\in\left(0,(\beta_0-\alpha_0)/2\right)$,
there exist $\e_\delta>0$ and a neighborhood $\mathcal{N}_\delta$ of the Maxwell point $\Delta_0$, defined in \eqref{eq:Maxwell-p}, such that
for $\e\in(0,\e_\delta)$ and 
\begin{equation}\label{eq:R_delta}
    r\in\mathcal{R}_\delta :=[\alpha_0+\delta,\beta_0-\delta],    
\end{equation} 
problem \eqref{eq:min} has exactly one simple solution with corresponding admissible pair
$\Delta(\e,r)\in\mathcal{N}_\delta$.
Moreover, there is a constant $C=C_\delta>0$ such that
\begin{equation}\label{eq:O-grande}
	\begin{aligned}
		&\hspace{4cm} \Delta(\e,r)=\Delta_0+O(\exp(-C/\e)), \\ 
		&z_1(\Delta(\e,r))=\alpha_0+O(\exp(-C/\e)), \qquad \quad z_2(\Delta(\e,r))=\beta_0+O(\exp(-C/\e)),
	\end{aligned}
\end{equation}
as $\e\to0^+$, uniformly for $r\in\mathcal{R}_\delta$.
\end{thm}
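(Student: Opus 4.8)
The plan is to solve the system \eqref{eq:I_0,1} for an admissible pair $\Delta=(\sigma,b)$ near the Maxwell point $\Delta_0$ by means of the Implicit Function Theorem, treating $\e$ and $r$ as parameters. The two equations $I_0(\Delta;\e)=2\e^{-1}$ and $I_1(\Delta;\e)=2r\e^{-1}$ constitute two scalar conditions on the two unknowns $\sigma,b$. The essential obstruction, already flagged after \eqref{eq:b}, is that $\Delta_0$ itself is \emph{not} admissible: as $\Delta\to\Delta_0$ the turning points $z_1,z_2$ approach the critical points $\alpha_0,\beta_0$ of $\Phi_{\sigma_0}$, so the simple zeros of $f_\Delta$ at the endpoints degenerate into double zeros and the integrand $1/H_\e^+(f_\Delta)\sim 1/\sqrt{2f_\Delta}$ loses integrability. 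Hence both $I_0$ and $I_1$ diverge logarithmically as $\Delta\to\Delta_0$, precisely as the right-hand sides $2\e^{-1},2r\e^{-1}$ diverge as $\e\to0^+$; a direct application of the Implicit Function Theorem at $\Delta_0$ is impossible and the system must be desingularized first.

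First I would isolate the singular part, splitting (this is \eqref{eq:splitIn}--\eqref{eq:Rn})
\[
  I_n(\Delta;\e)=\frac{1}{\sqrt2}\int_{z_1(\Delta)}^{z_2(\Delta)}\frac{z^n}{\sqrt{f_\Delta(z)}}\,dz+R_n(\Delta;\e),
\]
where the first term is the integral treated in \cite{CGS} (up to the harmless factor coming from $P_0(s)=s^2/2$) and the remainder collects the nonlinear correction due to $Q$,
\[
  R_n(\Delta;\e)=\frac{1}{\sqrt2}\int_{z_1}^{z_2}\frac{z^n}{\sqrt{f_\Delta(z)}}\left[\frac{1-\e^2f_\Delta(z)}{\sqrt{1-\frac{1}{2}\e^2f_\Delta(z)}}-1\right]dz.
\]
Since the bracket is $O(\e^2 f_\Delta)$, it cancels the endpoint singularity and $R_n$ becomes a convergent integral; using $0\le\e^2 f_\Delta<1$ (guaranteed by Lemma \ref{lem:fdelta} together with $\e<\overline F$), one expects $R_n=O(\e^2)$ uniformly for $\Delta$ near $\Delta_0$. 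For the CGS part I would extract the explicit logarithms by expanding $\Phi_\sigma$ to second order at the wells, $\Phi_\sigma(z)=\Phi_\sigma(\alpha_\sigma)+\frac{1}{2}\Phi_\sigma''(\alpha_\sigma)(z-\alpha_\sigma)^2+\cdots$ near $z_1$ and similarly near $z_2$. Writing $a_+:=b-\Phi_\sigma(\alpha_\sigma)>0$ and $a_-:=b-\Phi_\sigma(\beta_\sigma)>0$ for the two gaps, this yields
\[
  I_0=c_+(\sigma)\log\frac{1}{a_+}+c_-(\sigma)\log\frac{1}{a_-}+g_0(\sigma,b)+R_0,
\]
\[
  I_1=\alpha_\sigma c_+(\sigma)\log\frac{1}{a_+}+\beta_\sigma c_-(\sigma)\log\frac{1}{a_-}+g_1(\sigma,b)+R_1,
\]
with $c_\pm>0$ and $g_0,g_1$ smooth in $\Delta$.

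With the singularities made explicit, I would recast \eqref{eq:I_0,1} in regularized variables. The combination $I_1-rI_0=0$, equivalent to requiring the mean of $z$ over a period to equal $r$, reads $(\alpha_\sigma-r)c_+\log\frac{1}{a_+}+(\beta_\sigma-r)c_-\log\frac{1}{a_-}+(\text{smooth})=0$; since $r\in\mathcal R_\delta$ is interior to $(\alpha_0,\beta_0)$ one has $\alpha_\sigma-r<0<\beta_\sigma-r$, so this equation fixes the \emph{ratio} of the two logarithms (equivalently the relative sizes of the gaps), keeping it bounded away from $0$ and $\infty$; in the limit it reproduces the lengths $\ell_1,\ell_2$ of \eqref{eq:ell}. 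The remaining equation $I_0=2\e^{-1}$ then fixes the common scale: forcing a sum of logarithms of size $O(\e^{-1})$ gives $a_\pm=O(\exp(-C/\e))$, whence $z_1-\alpha_0$, $z_2-\beta_0$ and $\Delta-\Delta_0$ are all $O(\exp(-C/\e))$, which is exactly \eqref{eq:O-grande}. Introducing coordinates in which the two logarithmic terms become coordinate functions turns $\Delta\mapsto(I_0,I_1)$ into a smooth map whose leading Jacobian is the nondegenerate Jacobian of \cite{CGS}; the Implicit Function Theorem then produces, for each $\e\in(0,\e_\delta)$ and $r\in\mathcal R_\delta$, a unique $\Delta(\e,r)\in\mathcal N_\delta$ solving the system, which is automatically admissible because the constructed gaps $a_\pm$ are positive.

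The hard part will be Lemma \ref{Lem:Rn}: one must show that the nonlinear remainder $R_n$ is not merely bounded but a genuinely lower-order, \emph{regular} perturbation of the CGS integral — in particular that the derivatives $\partial_\sigma R_n$ and $\partial_b R_n$ stay controlled and neither reintroduce the endpoint singularity nor spoil the nondegeneracy of the leading Jacobian. The delicacy is in differentiating under the integral sign when the turning points $z_1(\Delta),z_2(\Delta)$ move with $\Delta$ and $f_\Delta$ degenerates there; the key points are that the integrand of $R_n$ vanishes like $\sqrt{f_\Delta}$ at the endpoints (rather than blowing up), and that the strict bound $\e^2 f_\Delta<1$ keeps the algebraic factors in $H_\e^+$ from \eqref{eq:defP^-1} and their derivatives uniformly bounded. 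Once Lemma \ref{Lem:Rn} is in hand, $R_n$ enters the regularized system only as a small smooth correction and the Implicit Function Theorem argument goes through as in \cite{CGS}.
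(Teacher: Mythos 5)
Your overall strategy coincides with the paper's: the same splitting of $I_n$ into the singular integral of \cite{CGS} plus a remainder produced by $Q$, the same extraction of the two logarithms $\ln\pi_1,\ln\pi_2$ with coefficients $\tfrac{1}{\sqrt2}\mathcal Q_iz_i^n$, and the same desingularization--plus--Implicit--Function--Theorem scheme, with the exponential estimates \eqref{eq:O-grande} obtained exactly as you describe. There are, however, two genuine gaps. The first concerns regularity: you treat the non-logarithmic parts $g_0,g_1$ and the remainder $R_n$ as ``smooth in $\Delta$'', and you only \emph{pose} the control of $\partial_\sigma R_n$, $\partial_b R_n$ as ``the hard part'' without supplying it. In fact none of these objects is $C^1$ up to $\Delta_0$: by \eqref{eq:z_i-NR} the turning points behave like $\alpha_\sigma+\sqrt{h_1}\,\Theta_1$ and $\beta_0-\sqrt{h_2}\,\Theta_2$, and the explicit evaluation of $\tilde R_0^\eta$ produces terms of the type $\lambda(\Delta)^2\ln\lambda(\Delta)$, whose gradients blow up like $h_i^{-s}$ with $s<1$. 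This is exactly the ``nearly regular'' class of Definition \ref{def:NR}: the Implicit Function Theorem cannot be applied in the $(\sigma,b)$ or $h$ variables, and the whole point of the scaling $h_i=\exp(\mu_ik_i-c_i(r)/\e)$ together with Lemma \ref{lem:CGS-ext} is to upgrade nearly regular functions to $C^1$ functions with vanishing gradient on $\mathcal E_0$. Saying that $R_n$ ``enters the regularized system only as a small smooth correction'' skips the actual content of Lemma \ref{Lem:Rn} (the Taylor expansion of $f_\Delta(z_1(\Delta)+t)$, the binomial series, and the elementary integrals giving the $\lambda^2\ln\lambda$ terms), which is precisely the new technical ingredient of this paper relative to \cite{CGS}.

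The second gap is the uniqueness claim. The Implicit Function Theorem yields a unique fixed point $k=k_\delta(\e,r)$ in a fixed ball $\mathcal B_\delta\subset\R^2$ of the rescaled variables, but a fixed neighborhood $\mathcal N_\delta$ of $\Delta_0$ corresponds, for each $\e$, to a region of $k$-space whose size grows like $\e^{-1}$; a priori there could be further solutions of \eqref{eq:I_0,1} in $\mathcal N_\delta$ whose $k$-coordinates fall outside $\mathcal B_\delta$. The paper closes this by an a priori argument that your proposal omits: every solution $\Delta$ of \eqref{eq:I_0,1} in $\Omega_0$ satisfies $z_1(\Delta)<r<z_2(\Delta)$, hence $\pi_i(\Delta)\leq\bar C\exp(-C/\e)$ and $|K(\Delta,\e,r)|\leq\Gamma$, and a compactness argument combined with \eqref{eq:a_i} forces $K(\Delta,\e,r)$ into $\mathcal B_\delta$ for $\e$ small, where the local uniqueness applies. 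Without this step the ``exactly one'' in the statement is not established.
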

Hence, the \textbf{Maxwell solution}, mentioned in Section \ref{sec:intro}, is the one whose existence and uniqueness is established by Theorem \ref{thm:Maxwell}.
The Maxwell solution plays a crucial role, because it minimizes the energy functional \eqref{eq:energy} with mass constraint \eqref{eq:constraint}, among all its stationary points, as we shall see in Section \ref{sec:min}.
We prove Theorem \ref{thm:Maxwell} in the next section.
\begin{rem}\label{rem:mass}
Condition \eqref{eq:R_delta} is necessary to prove the existence of simple solutions to \eqref{eq:min}.
Indeed, as we mentioned in Remark \ref{rem:unique}, all possible solutions of \eqref{eq:min} are constants or restriction of periodic solutions in
the real line, and the latter, oscillating between $z_1$ and $z_2$ in \eqref{eq:boundary-z}, have mass in $[2z_1\e^{-1},2z_2\e^{-1}]$. 
As a consequence, if $r\leq\alpha_0$ or $r\geq\beta_0$, then \eqref{eq:min} has the unique solution $u\equiv r$ in view of \eqref{eq:z_i-sigma} below.
Actually, it is possible to prove a stronger result: given $\e>0$, there exists $\delta_\e>0$ such that problem \eqref{eq:min} does not admit simple solutions if $r<\alpha_0+\delta_\e$ or $r>\beta_0-\delta_\e$.
We omit the proof of this fact, which can be easily done by proceeding as in \cite[Theorem 7.1]{CGS} and by taking advantage of the analysis we present in the subsequent Section \ref{sec:proof}.
\end{rem}
\section{Proof of Theorem \ref{thm:Maxwell}}\label{sec:proof}
The strategy is to solve system \eqref{eq:I_0,1} for small $\e$ using the Implicit Function Theorem and thus we shall analyze carefully the behavior of $I_0$ and $I_1$ with respect to this parameter. 
To do this, notice that, by rewriting $1/H_\e^+$ as
$$\frac{1}{H_\e^+(s)}=\frac{1}{\sqrt{2s}}+\frac{\e^2\sqrt{s}}{\sqrt2(2-\e^2s)+2\sqrt{2-\e^2s}}-\frac{\e^2\sqrt{s}}{\sqrt{2-\e^2s}},$$
and substituting in \eqref{eq:I_n}, we deduce
\begin{equation}\label{eq:splitIn}
    I_n(\Delta;\e)=\frac{1}{\sqrt2}\int_{z_1(\Delta)}^{z_2(\Delta)}\frac{z^n}{\sqrt{f_\Delta(z)}}\,dz+\e^2 R_n(\Delta;\e),
\end{equation}
where
\begin{equation}\label{eq:Rn}
    R_n(\Delta;\e):=-\int_{z_1(\Delta)}^{z_2(\Delta)}\frac{\sqrt{f_\Delta(z)}(\sqrt{2(2-\e^2f_\Delta(z))}-1)z^n}{\sqrt{2(2-\e^2f_\Delta(z)}(\sqrt{2-\e^2f_\Delta(z)}+\sqrt2)}\,dz.
\end{equation}
In particular, we shall single out the terms in \eqref{eq:splitIn} which are singular in $\e$, while the remainder should be treated implicitly. 
For this, as the involved functions are not globally $C^1$ close to $\Delta_0$, we need to introduce the following notions as stated in \cite{CGS}.

\begin{rem}\label{rem:generic-Q-4}
The specific form \eqref{eq:Q} for $Q$ again plays a crucial role in \eqref{eq:splitIn}-\eqref{eq:Rn} because it leads to the explicit formula \eqref{eq:defP^-1}.
In the case of a generic function $Q$ satisfying \eqref{eq:Q-generic}, one can use solely the expansion of the inverse $H^+_\e$ close to zero (as already claimed in Remark \ref{rem:unique}, see 
\cite[Lemma 2.1]{DFS}) in the proof of  the needed properties of the remainder $R_n$, without having an explicit formula at our disposal.
\end{rem}
 
Let $\Sigma$ be the set of all admissible pairs $(\sigma,b)$ and let us write the boundary of $\Sigma$ as the disjoint union 
\begin{equation*}
	\partial\Sigma=\bigcup_{i=1}^3\partial_i\Sigma,
\end{equation*}
where   
\begin{align*}
    \partial_1\Sigma&:=\left\{(\sigma,b) \, : \, b=\Phi_\sigma(\alpha_\sigma), \, \sigma\in[\sigma_0,\overline\sigma)\right\},\\
    \partial_2\Sigma&:=\left\{(\sigma,b) \, : \, b=\Phi_\sigma(\beta_\sigma), \, \sigma\in(\underline\sigma,\sigma_0]\right\},\\
    \partial_3\Sigma&:=\left\{(\sigma,b) \, : \, b=\Phi_\sigma(\zeta_\sigma), \, \sigma\in[\underline\sigma,\overline\sigma]\right\}.
\end{align*}
Note that $\partial_1\Sigma$ and $\partial_2\Sigma$ intersect at $\Delta_0=(\sigma_0,b_0)$ and that $\partial\Sigma$ also has cusps at the points $\underline\Delta:=(\underline\sigma,\underline b)$, $\overline\Delta:=(\overline\sigma,\overline b)$, where
\begin{equation*}
    \underline b:=\Phi_{\underline\sigma}(\beta_{\underline\sigma}),
    \qquad \qquad \overline b:=\Phi_{\overline\sigma}(\beta_{\overline\sigma}).
\end{equation*}
The functions $z_i(\Delta)$, $i=1,2$, are continuous in $\overline\Sigma$ and $z_i\in C^4(\Sigma)$; moreover,
\begin{equation}\label{eq:z_i-sigma}
    \alpha_0<z_1(\Delta)<z_2(\Delta)<\beta_0, \qquad \qquad \forall\, \Delta\in\Sigma,
\end{equation}
while in the boundary one has
\begin{equation}\label{eq:z_i-boundary}
    \begin{aligned}
        z_1(\Delta)=\alpha_\sigma, \qquad \qquad \Delta\in\partial_1\Sigma,\\
        z_2(\Delta)=\beta_\sigma, \qquad \qquad \Delta\in\partial_2\Sigma,\\
        z_1(\Delta)=z_2(\Delta)=\zeta_\sigma, \qquad \qquad \Delta\in\partial_3\Sigma.
    \end{aligned}
\end{equation}
Finally,
\begin{equation}\label{eq:z_i-final}
    \begin{aligned}
        \Phi'_\sigma(z_1(\Delta))\neq0, \qquad \mbox{ for } \Delta\in\partial_2\Sigma\backslash\left\{\Delta_0\right\},\\
        \Phi'_\sigma(z_2(\Delta))\neq0, \qquad \mbox{ for } \Delta\in\partial_1\Sigma\backslash\left\{\Delta_0\right\}.
    \end{aligned}
\end{equation}
The proofs of \eqref{eq:z_i-sigma}, \eqref{eq:z_i-boundary}, \eqref{eq:z_i-final} can be found in \cite[Proposition 4.2]{CGS}.
Notice in particular that, in view of \eqref{eq:z_i-sigma}, the assumption $\e\in(0,\overline F)$  guarantees the validity of \eqref{eq:z'=+}, with $f_\Delta(z)$ defined in \eqref{eq:f_delta}, for any $z\in (z_1(\Delta), z_2(\Delta))$, $i=1,2$, and $\Delta \in \Sigma$. 

In order to solve system \eqref{eq:I_0,1}, we introduce the transformation $h=\pi(\Delta)$, defined by
\begin{equation}\label{eq:trans-pi}
    h=(h_1,h_2)=(\pi_1(\Delta),\pi_2(\Delta)), \qquad \mbox{ with }\quad 
    \begin{cases}
    \pi_1(\Delta)=b-\Phi_\sigma(\alpha_\sigma),\\
    \pi_2(\Delta)=b-\Phi_\sigma(\beta_\sigma).
    \end{cases}
\end{equation}
From the definition of the Gibbs function \eqref{eq:Gibbs} and the fact that $\alpha_\sigma$, $\beta_\sigma$ are critical points of $\Phi_\sigma$, 
it follows that the Jacobian determinant of the mapping $\pi$ is
\begin{equation*}
    \mbox{\textrm{det} }J_\pi(\Delta)=\alpha_\sigma-\beta_\sigma<0, \qquad \qquad \forall\,\sigma\in[\underline\sigma,\overline{\sigma}].
\end{equation*}
Hence, $\pi$ is a $C^4$ diffeomorphism of $\overline{\Sigma}$ onto $\overline{\mathcal{H}}$, where $\mathcal{H}=\pi(\Sigma)$.
In particular, the definitions of $\Delta_0$, $\underline\Delta$, $\overline{\Delta}$ give
\begin{equation*}
    \pi(\Delta_0)=0, \qquad \pi_1(\overline\Delta)=0, \qquad \pi_2(\underline\Delta)=0.
\end{equation*}
Finally, for later use, it is worth mentioning that, in view of \eqref{eq:b}, $\pi_i \geq 0$ in $\bar\Sigma$ and $\pi_i > 0$ in $\Sigma$, $i=1$, $2$.
\begin{defn}[\cite{CGS}]\label{def:NR}
We say that a function $\psi:=\psi(h)$ is \textbf{nearly regular at} $h=0$ if there exists a neighborhood $\mathcal{U}\subset\mathcal{H}$ of $0$ and $s<1$  such that 
$\psi\in C(\mathcal{U})$, $\psi\in C^1(\mathcal{U}\backslash\{0\})$, and
\begin{align*}
    \psi(h)&=\psi(0)+\mathcal{O}\left(|h|^{1-s}\right),\\
    \nabla\psi(h)&=\left(\mathcal{O}\left(h_1^{-s}\right),\mathcal{O}\left(h_2^{-s}\right)\right).
\end{align*} 
\end{defn}
In the sequel, we will use the following notations: 
given a function $\varphi:\Sigma\to\R$, we write 
\begin{equation*}
    \hat{\varphi}(h)=\varphi(\pi^{-1}(h)).
\end{equation*}
Moreover, if $\Omega\subset\R^2$, then we denote
\begin{equation*}
    C^+(\Omega):=\left\{f\in C(\Omega)\, : \, f(x)>0\, \mbox{ for any } x\in\Omega\right\}.
\end{equation*}
Finally, if $\Omega$ is a neighborhood of $\Delta_0$ in $\overline\Sigma$, we define
\begin{align*}
    NR(\Omega)&:=\left\{\varphi\in C(\Omega)\, : \, \hat{\varphi}(h) \mbox{ is nearly regular at } h=0\right\},\\
    NR^+(\Omega)&:=\left\{\varphi\in NR(\Omega) \, : \, \varphi(x)>0\, \mbox{ for any } x\in\Omega\right\}. 
\end{align*}
Let 
\begin{equation*}
    \mathcal{F}_i:=\left\{\Delta\in\overline{\Sigma}\, : \, F''(z_i(\Delta))>0\right\},
\end{equation*}
and let $\Omega_0$, $\Omega_1$ and $\Omega_2$  be arbitrary closed subregions of $\overline{\Sigma}$ with
\begin{equation*}
    \Omega_0\subset\mathcal{F}_1\cap\mathcal{F}_2, \qquad 
    \Omega_1\subset \left(\mathcal{F}_1\backslash\partial_2\Sigma\right), \qquad \qquad     \Omega_2\subset\left(\mathcal{F}_2\backslash\partial_1\Sigma\right).
\end{equation*}
so that
\begin{equation}\label{eq:F''>0}
    F''(z_i(\Delta))\geq C>0 \qquad \mbox{ on } \Omega_0\cup\Omega_i, \qquad \qquad i=1,2.
\end{equation}
Notice that in the definition \eqref{eq:splitIn} of $I_n$ the first integral has an integrand which is singular at $z=z_i(\Delta)$, because $f_\Delta(z_i(\Delta))=0$, while $R_n$ is continuous as a function of $\Delta$ in $\bar\Sigma$, thanks to \eqref{eq:Rn} and Lemma \ref{lem:fdelta}.
Therefore, the most relevant case in our analysis is when $\Delta_0\in\Omega_0$, that is where the integrals $I_0, I_1$ defined in \eqref{eq:I_n} blow up, and we shall use the notion introduced in Definition \ref{def:NR}.
Our fundamental result which is instrumental to solve system \eqref{eq:I_0,1} close to $\Delta = \Delta_0$, or, equivalently, close to $h=0$, is contained in the next proposition and concerns the precise nature of the singularities of $I_0$, $I_1$ at the Maxwell point $\Delta_0$. 
With this aim in mind, let us first recall that $z_i(\Delta)$ belong to $NR(\bar\Sigma)$ and for $h=\pi(\Delta)$, one has
\begin{equation}\label{eq:z_i-NR}
    \begin{aligned}
        z_1(\Delta)&=\alpha_\sigma+ \sqrt{h_1}\Theta_1(\Delta),\\
        z_2(\Delta)&=\beta_\sigma- \sqrt{h_2}\Theta_2(\Delta),
    \end{aligned}
\end{equation}
on $\Sigma$, with $\Theta_i\in NR^+(\bar\Sigma)\backslash\left\{\underline\Delta,\overline{\Delta}\right\}$.
For the proof of \eqref{eq:z_i-NR}, see \cite[Proposition 4.3]{CGS}.
Next, let us introduce the functions $\mathcal Q_1$ and $\mathcal Q_2$ as follows:
\begin{equation}\label{eq:Q_i}
    \mathcal Q_i(\Delta):=\left[2F''(z_i(\Delta))\right]^{-\frac12}.
\end{equation}
It is worth mentioning that \eqref{eq:F''>0} and \cite[Lemma 4.2]{CGS} imply
\begin{equation}\label{eq:Q_i-NR}
     \mathcal Q_i\in NR^+(\Omega_i\cup\Omega_0), \qquad\qquad i=1,2.
\end{equation}
\begin{prop}\label{prop:In}
Assume $F$ satisfies \eqref{eq:hypF} and $\e\in(0,\overline F)$, where $\overline F$ is defined in \eqref{eq:eps-small-final}.
Then, $I_n$ defined in \eqref{eq:I_n} verifies
\begin{align}
    &I_n + \frac{1}{\sqrt{2}}\mathcal Q_1z_1^n \ln(\pi_1)\in C(\Omega_1),  \qquad I_n + \frac{1}{\sqrt{2}}\mathcal Q_2z_2^n \ln(\pi_1)\in C(\Omega_2), \qquad n=0,1, \label{eq:InCont}\\
    &I_n + \frac{1}{\sqrt{2}}\mathcal Q_1z_1^n \ln(\pi_1) + \frac{1}{\sqrt{2}}\mathcal Q_2z_2^n  \ln(\pi_2) \in NR(\Omega_0), \qquad n=0,1. \label{eq:InNR}
\end{align}
\end{prop}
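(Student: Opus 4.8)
The plan is to exploit the splitting \eqref{eq:splitIn}, which separates the classical Ginzburg--Landau contribution from the genuinely new one due to the mean curvature operator. Setting
\begin{equation*}
    I_n^0(\Delta):=\frac{1}{\sqrt2}\int_{z_1(\Delta)}^{z_2(\Delta)}\frac{z^n}{\sqrt{f_\Delta(z)}}\,dz,
\end{equation*}
so that $I_n=I_n^0+\e^2R_n$ with $R_n$ as in \eqref{eq:Rn}, the key observation is that $I_n^0$ does not depend on $\e$ and coincides exactly with the integral analysed in \cite{CGS} for the functional \eqref{GL-intro}. Indeed, for the classical functional the first integral of the Euler--Lagrange equation reads $(z')^2=2f_\Delta(z)$, whence $z'=\sqrt{2f_\Delta}$, and the reduction of the mass-type integrals to the $z$ variable produces precisely $I_n^0$. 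Consequently the singular behaviour of $I_n^0$ at the Maxwell point is already known, and the whole novelty reduces to showing that the remainder $\e^2R_n$ does not contribute to it.

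For the leading term I would invoke the corresponding statement established in \cite{CGS}: it is proved there that $I_n^0+\tfrac{1}{\sqrt2}\mathcal{Q}_1z_1^n\ln\pi_1\in C(\Omega_1)$, that $I_n^0+\tfrac{1}{\sqrt2}\mathcal{Q}_2z_2^n\ln\pi_2\in C(\Omega_2)$, and that $I_n^0+\tfrac{1}{\sqrt2}\mathcal{Q}_1z_1^n\ln\pi_1+\tfrac{1}{\sqrt2}\mathcal{Q}_2z_2^n\ln\pi_2\in NR(\Omega_0)$, with $\mathcal{Q}_i$ as in \eqref{eq:Q_i}. That the coefficients are exactly those of the statement is seen from the standard endpoint expansion: since $\Phi_\sigma'(\alpha_\sigma)=0$ one has $f_\Delta(z)=\tfrac12F''(\alpha_\sigma)(z-\alpha_\sigma)^2-\pi_1+o\big(|z-\alpha_\sigma|^2\big)$ near the left endpoint, so the elementary integral $\int\big[(z-\alpha_\sigma)^2-2\pi_1/F''(\alpha_\sigma)\big]^{-1/2}\,dz$ produces the logarithm $-\tfrac12\ln\pi_1$; multiplying by the prefactor $[F''(\alpha_\sigma)]^{-1/2}$ coming from $1/\sqrt{2f_\Delta}$, by $z_1^n$, and using $z_1\to\alpha_\sigma$, one recovers precisely $-\tfrac{1}{\sqrt2}\mathcal{Q}_1z_1^n\ln\pi_1$, the right endpoint being symmetric in $\pi_2$. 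On any compact subinterval bounded away from $z_1,z_2$ the integrand is smooth in $\Delta$, so only the two endpoints generate singular behaviour; on $\Omega_1$ (respectively $\Omega_2$) only the left (respectively right) endpoint is nearly critical, which is why a single logarithm already gives continuity there, whereas on $\Omega_0\ni\Delta_0$ both endpoints degenerate and, after subtracting both logarithms, the best one can assert is near regularity in the sense of Definition \ref{def:NR}.

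It remains to control $\e^2R_n$, which is the heart of the matter and the point where the present analysis departs from \cite{CGS}. From the explicit formula \eqref{eq:Rn}, the integrand of $R_n$ carries a factor $\sqrt{f_\Delta(z)}$ in the numerator, which cancels the $1/\sqrt{f_\Delta}$ blow-up at the endpoints; together with the uniform bound $0\le f_\Delta\le\overline F^{-2}$ of Lemma \ref{lem:fdelta} (so that, for $\e\in(0,\overline F)$, the denominators stay bounded away from zero), this makes the integrand bounded uniformly in $\Delta$, and dominated convergence yields $R_n\in C(\overline\Sigma)$. Since $\Omega_1$ and $\Omega_2$ exclude $\Delta_0$, this already gives the two continuity statements, because adding a continuous function preserves membership in $C(\Omega_1)$ and $C(\Omega_2)$. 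The delicate point, and the main obstacle, is the membership $R_n\in NR(\Omega_0)$, which is exactly the content of Lemma \ref{Lem:Rn}: differentiating \eqref{eq:Rn} in $\Delta$, the derivatives hit the moving endpoints $z_i(\Delta)$ --- which by \eqref{eq:z_i-NR} behave like $\sqrt{h_i}$, so that $\partial_\Delta z_i=O(h_i^{-1/2})$ --- as well as the weight $f_\Delta^{1/2}$, and one must show that these competing singularities combine into a gradient bounded by $O(h_i^{-s})$ for some $s<1$, together with the H\"older-type estimate $R_n(h)=R_n(0)+O(|h|^{1-s})$. I would prove this by the change of variables that linearises $f_\Delta$ near each endpoint (as encoded in \eqref{eq:z_i-NR}) and by integrating by parts to move the $\Delta$-derivatives off the singular weight, in the spirit of the estimates leading to \eqref{eq:z_i-NR}. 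Once $R_n\in NR(\Omega_0)$ is secured, the conclusion is immediate: the classes $C(\Omega_i)$ and $NR(\Omega_0)$ are stable under addition, so adding $\e^2R_n$ to the expressions for $I_n^0$ preserves all three memberships, which, since $I_n=I_n^0+\e^2R_n$, is precisely \eqref{eq:InCont}--\eqref{eq:InNR}.
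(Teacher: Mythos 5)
Your proposal follows the paper's proof essentially verbatim: split $I_n=I_n^{0}+\e^2R_n$ via \eqref{eq:splitIn}, invoke \cite[Proposition 5.1]{CGS} for the $\e$--independent leading term $I_n^{0}$ (whose logarithmic singularities with coefficients $\tfrac{1}{\sqrt2}\mathcal Q_i z_i^n$ are exactly the ones recorded there), and then use the continuity of $R_n$ on $\bar\Sigma$ together with $R_n\in NR(\Omega_0)$ from Lemma \ref{Lem:Rn} to conclude, since $C(\Omega_i)$ and $NR(\Omega_0)$ are stable under addition. The only divergence is cosmetic and in your favor: you write $\ln\pi_2$ in the $\Omega_2$ continuity statement, which is the correct coefficient (the $\ln(\pi_1)$ appearing in the second part of \eqref{eq:InCont} is a typo), and your sketch of why $R_n\in NR(\Omega_0)$ is vaguer than the paper's explicit Taylor-expansion computation in Lemma \ref{Lem:Rn}, but that lemma is quoted as a separate ingredient here, so this does not affect the present proof.
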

In view of \cite[Proposition 5.1]{CGS}, \eqref{eq:splitIn}, \eqref{eq:Rn}, and the continuity of $R_0(\Delta;\e)$ and $R_1(\Delta;\e)$ in $\Omega_0\cup\Omega_1\cup\Omega_2$,
the above result is a direct consequence of the following lemma.
\begin{lem}\label{Lem:Rn}
Under the same assumptions of Proposition \ref{prop:In}, $R_0$ and $R_1$ defined as in \eqref{eq:Rn} belong to $NR(\Omega_0)$.
\end{lem}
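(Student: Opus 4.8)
The plan is to exploit the structural difference between $R_n$ and the leading term in \eqref{eq:splitIn}: in \eqref{eq:Rn} the factor $\sqrt{f_\Delta}$ sits in the \emph{numerator}, so $R_n$ is a genuinely convergent (indeed bounded) integral whose singular character at $\Delta_0$ only emerges after one differentiation. First I would write
$$R_n(\Delta;\e)=\int_{z_1(\Delta)}^{z_2(\Delta)} z^n\sqrt{f_\Delta(z)}\,G_\e\!\left(f_\Delta(z)\right)dz,$$
where $G_\e$ is the rational-in-$\sqrt{2-\e^2\cdot}$ factor read off from \eqref{eq:Rn}. By Lemma \ref{lem:fdelta} and the standing assumption $\e\in(0,\overline F)$, the argument $\e^2 f_\Delta(z)$ stays in a compact subset of $[0,1)$, on which $G_\e\in C^\infty$ is bounded together with all its derivatives and $G_\e(0)\neq0$. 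Since the integrand is bounded and vanishes at the endpoints (where $f_\Delta=0$), dominated convergence together with the continuity of $z_i(\Delta)$ gives $R_n\in C(\Omega_0)$, with a well-defined value at $\Delta_0$; differentiability in the interior (the $C^1$ requirement of Definition \ref{def:NR}) follows from $z_i\in C^4(\Sigma)$ and the non-degenerate, linear vanishing of $f_\Delta$ at $z_1,z_2$ away from $\partial\Sigma$. These are the routine parts.

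The substance is the gradient estimate, and here the key observation is that, differentiating $R_n$ in $\Delta=(\sigma,b)$, the \emph{boundary contributions vanish}, precisely because the integrand vanishes at $z_1,z_2$. Using $\partial_\sigma f_\Delta=-z$ and $\partial_b f_\Delta=-1$ (recall $\Phi_\sigma(z)=F(z)-\sigma z$), and splitting $G_\e(f_\Delta)=G_\e(0)+\bigl(G_\e(f_\Delta)-G_\e(0)\bigr)$ with the bracket $O(f_\Delta)$, I would obtain
$$\partial_\sigma R_n=-\tfrac{1}{2}G_\e(0)\,A_{n+1}+\mathcal R_\sigma,\qquad
\partial_b R_n=-\tfrac{1}{2}G_\e(0)\,A_{n}+\mathcal R_b,\qquad A_m:=\int_{z_1}^{z_2}\frac{z^m}{\sqrt{f_\Delta}}\,dz,$$
where the remainders $\mathcal R_\sigma,\mathcal R_b$ collect the contributions of $\sqrt{f_\Delta}\,G_\e'(f_\Delta)$ and of $\bigl(G_\e(f_\Delta)-G_\e(0)\bigr)/\sqrt{f_\Delta}=O(\sqrt{f_\Delta})$, hence are \emph{bounded}. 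Thus $\nabla R_n$ is governed entirely by the singular integrals $A_m$ already treated in \cite[Proposition 5.1]{CGS}, which yield $A_m=-\sqrt2\bigl(\mathcal Q_1 z_1^m\ln\pi_1+\mathcal Q_2 z_2^m\ln\pi_2\bigr)+(\text{nearly regular})$, with $\mathcal Q_i$ as in \eqref{eq:Q_i} and $\pi_i$ as in \eqref{eq:trans-pi}.

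It remains to pass to the coordinates $h=\pi(\Delta)$ and verify $\nabla\hat R_n=\bigl(\mathcal O(h_1^{-s}),\mathcal O(h_2^{-s})\bigr)$. By \eqref{eq:punticritici} the Jacobian of $\pi$ has rows $(\alpha_\sigma,1)$ and $(\beta_\sigma,1)$, so the chain rule gives $\partial_{h_1}\hat R_n=(\alpha_\sigma-\beta_\sigma)^{-1}(\partial_\sigma R_n-\beta_\sigma\partial_b R_n)$ and $\partial_{h_2}\hat R_n=(\alpha_\sigma-\beta_\sigma)^{-1}(-\partial_\sigma R_n+\alpha_\sigma\partial_b R_n)$. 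Substituting the formulas above, the coefficients of $\ln\pi_1$ and $\ln\pi_2$ in $\partial_{h_1}\hat R_n$ become proportional to $\mathcal Q_1 z_1^n(z_1-\beta_\sigma)$ and $\mathcal Q_2 z_2^n(z_2-\beta_\sigma)$, and this is where the cancellation enters: by the expansion \eqref{eq:z_i-NR}, $z_2-\beta_\sigma=-\sqrt{h_2}\,\Theta_2$, so the cross term $\mathcal Q_2 z_2^n(z_2-\beta_\sigma)\ln\pi_2=\mathcal O(\sqrt{h_2}\,\ln h_2)$ is \emph{bounded}, while the coefficient $z_1-\beta_\sigma$ of $\ln\pi_1$ tends to $\alpha_0-\beta_0\neq0$. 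Hence $\partial_{h_1}\hat R_n=\mathcal O(\ln h_1)=\mathcal O(h_1^{-s})$ for any $s\in(0,1)$; symmetrically, using $z_1-\alpha_\sigma=\sqrt{h_1}\,\Theta_1$, the $\ln\pi_1$ term drops out of $\partial_{h_2}\hat R_n$ and $\partial_{h_2}\hat R_n=\mathcal O(h_2^{-s})$.

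Finally, integrating $\nabla\hat R_n\cdot h$ along the segment $t\mapsto th$, $t\in[0,1]$, and using $\partial_{h_i}\hat R_n(th)\,h_i=\mathcal O(t^{-s}h_i^{1-s})$ with $\int_0^1 t^{-s}\,dt<\infty$, gives the growth estimate $\hat R_n(h)=\hat R_n(0)+\mathcal O(|h|^{1-s})$. Together with the continuity and interior regularity from the first step, this establishes $R_0,R_1\in NR(\Omega_0)$. I expect the main obstacle to be exactly the middle two paragraphs, namely the reduction of the innocent-looking bounded integral $R_n$ to the singular Cahn--Gurtin--Slemrod integrals $A_m$ via the vanishing of the boundary terms, and the cross-term cancellation that crucially relies on the square-root vanishing of $z_i-\alpha_\sigma$ and $z_i-\beta_\sigma$ recorded in \eqref{eq:z_i-NR}; this is precisely the extra difficulty, absent in \cite{CGS}, caused by the nonlinear function $Q$.
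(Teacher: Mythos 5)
Your argument is correct, but it follows a genuinely different route from the paper's. The paper never differentiates $R_n$: it localizes near the endpoint $z_1(\Delta)$, splits $G_\e(s)=G_\e(0)+\tilde G_\e(s)$ with $\sqrt{s}\,\tilde G_\e(s)\in C^1$, reduces everything to $\int_0^\eta\sqrt{f_\Delta(z_1(\Delta)+t)}\,dt$, Taylor-expands $f_\Delta(z_1(\Delta)+t)=[2\mathcal Q_1(\Delta)]^{-2}(2\lambda t+t^2+\dots)$ with $\lambda=\lambda(\Delta)$ vanishing on $\partial_1\Sigma$, and antidifferentiates explicitly; the worst terms come out as $\lambda^2\ln\lambda$ and $\lambda^3\ln\lambda$, which are nearly regular by \cite[Lemma 5.1]{CGS}. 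You instead differentiate under the integral sign (legitimate here, since the integrand vanishes at the moving endpoints, killing the Leibniz boundary terms, and its $\Delta$-derivative has only the integrable singularity $f_\Delta^{-1/2}$ at the simple zeros $z_1,z_2$ for $\Delta\in\Sigma$), reduce $\nabla R_n$ to the singular integrals $A_m$ already analyzed in \cite[Proposition 5.1]{CGS} plus uniformly bounded remainders, and use the square-root vanishing in \eqref{eq:z_i-NR} to cancel the cross logarithm in each component of $\nabla\hat R_n$; the increment estimate then follows by integrating along rays. Your route is shorter and explains structurally why $R_n$ is nearly regular (its gradient carries exactly the $\ln\pi_1$, $\ln\pi_2$ singularity pattern of $I_n$, damped by the endpoint vanishing of $\sqrt{f_\Delta}$), whereas the paper's computation produces the explicit leading-order terms, a decomposition it then recycles verbatim in Proposition \ref{prop:enery-Maxwell} and Theorem \ref{thm:eps-conv}. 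Two minor points to tidy up: the coefficient of the logarithms in $A_m$ read off from \cite[Proposition 5.1]{CGS} is $-1$, not $-\sqrt2$ (harmless, since only the factors $z_1-\beta_\sigma$, $z_2-\beta_\sigma$, etc.\ matter for the cancellation), and for the final ray integration you should observe that near $h=0$ the set $\mathcal H$ is exactly a piece of the open first quadrant, because $\pi$ maps the boundary arcs $\partial_1\Sigma$ and $\partial_2\Sigma$ onto the coordinate axes, so the segments $t\mapsto th$ do remain in the domain where your gradient bound holds.
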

\begin{proof}
From \eqref{eq:Rn}, it follows that
\begin{align*}
    R_0(\Delta;\e)=-\int_{z_1(\Delta)}^{z_2(\Delta)}\frac{\sqrt{f_\Delta(z)}(\sqrt{2(2-\e^2f_\Delta(z))}-1)}{\sqrt{2(2-\e^2f_\Delta(z)}(\sqrt{2-\e^2f_\Delta(z)}+\sqrt2)}\,dz, \\
    R_1(\Delta;\e)=-\int_{z_1(\Delta)}^{z_2(\Delta)}\frac{\sqrt{f_\Delta(z)}(\sqrt{2(2-\e^2f_\Delta(z))}-1)z}{\sqrt{2(2-\e^2f_\Delta(z)}(\sqrt{2-\e^2f_\Delta(z)}+\sqrt2)}\,dz.
\end{align*}
We need to control the above integrals only close to the boundary points $z_1(\Delta)$ and $z_2(\Delta)$, namely where $f_\Delta$ vanishes; see \eqref{eq:f_delta}.
It is worth observing that, contrary to $I_0$ and $I_1$, these integrals will not blow up, and actually they are continuous in the whole region $\bar \Sigma$, at least for $\e$ smaller than $\overline F$ defined in \eqref{eq:eps-small-final}.  However,  they are not smooth close to the boundary points,   yet we shall be able to prove they are nearly regular. For this, we focus our attention in the region $(z_1(\Delta), z_1(\Delta) + \eta)$ for a fixed $\eta>0$ sufficiently small; the analysis close to $z_2(\Delta)$ being analogous. 

Let us start with the study of  $R_0$, and define the following integral
\begin{equation*}
     R^\eta_0(\Delta;\e) := \int_{z_1(\Delta)}^{z_1(\Delta)+\eta}\sqrt{f_\Delta(z)}G_\e(f_\Delta(z))\,dz = \int_{0}^{\eta}\sqrt{f_\Delta(z_1(\Delta)+t)}G_\e(f_\Delta(z_1(\Delta)+t))\,dt,
\end{equation*}
where
\begin{equation*}
     G_\e(s) := -\frac{\sqrt{2(2-\e^2s)}-1}{\sqrt{2(2-\e^2s)}(\sqrt{2-\e^2s}+\sqrt2)}.
\end{equation*}
Notice that the function $G_\e\in C^\infty(0,2\e^{-2})$ and 
\begin{equation*}
    G_\e(s)=G_\e(0)+\tilde G_\e(s)=-\frac1{4\sqrt2}+\frac{s+3\sqrt{2(2-\e^2s)}-6}{4\sqrt2\sqrt{2-\e^2s}(\sqrt{2-\e^2s}+\sqrt2)}, \qquad s\in(0,2\e^{-2}).
\end{equation*}
It follows that
\begin{equation*}
    R^\eta_0(\Delta;\e)= -\frac{1}{4\sqrt2} \int_{0}^{\eta}\sqrt{f_\Delta(z_1(\Delta)+t)}\,dt+\int_{0}^{\eta}\sqrt{f_\Delta(z_1(\Delta)+t)}\tilde G_\e(f_\Delta(z_1(\Delta)+t))\,dt,
\end{equation*}
and since the function $\sqrt s\tilde G_\e(s)\in C^1(0,2\e^{-2})$, $R^\eta_0(\Delta;\e)\in NR(\Omega_0)$ if and only if $\tilde R^\eta_0(\Delta;\e)\in NR(\Omega_0)$, where
\begin{equation}\label{eq:tilde R_0} 
    \tilde R^\eta_0(\Delta) :=\int_{0}^{\eta}\sqrt{f_\Delta(z_1(\Delta)+t)}\,dt.
\end{equation}
In order to study the latter integral, we expand $f_\Delta(z_1(\Delta)+t)$ about $t=0$ using Taylor's formula: first, we have
\begin{equation*}
    f_\Delta(z_1(\Delta)+t)=\Phi'_\sigma(z_1(\Delta))t+ \frac{1}{2}F''(z_1(\Delta))t^2+ \frac16F'''(z_1(\Delta))t^3+\Theta_F(z_1(\Delta),t)t^4,
\end{equation*}
where $\Theta_F(y,t)$ is a $C^1$ function for $y>0$ and we used $f(z_1(\Delta))=0$ and the formulas \eqref{eq:f_delta}-\eqref{eq:Gibbs}.
As a consequence, we can rewrite
\begin{equation*}
    f_\Delta(z_1(\Delta)+t)=\left[2\mathcal Q_{1}(\Delta)\right]^{-2}R(t,\Delta),
\end{equation*}
with $\mathcal{Q}_1$ defined in \eqref{eq:Q_i} and 
\begin{equation}\label{eq:deflambda}
\begin{aligned}
    R(t,\Delta)&:=2\lambda(\Delta)t+t^2+c(\Delta)t^3+ \rho(t,\Delta)t^4, \\
    \lambda(\Delta)&:=\frac{\left[F'(z_1(\Delta))-F'(\alpha_0)\right]}{F''(z_1(\Delta))},\\
    c(\Delta)&:=\frac{F'''(z_1(\Delta))}{3F''(z_1(\Delta))}, \qquad \qquad  
    \rho(t,\Delta):=\frac{2\Theta_F(z_1(\Delta),t)}{F''(z_1(\Delta))}. 
\end{aligned}
\end{equation}
In particular, $\lambda, c\in C(\Omega_0)$, $\rho\in C([0,\infty)\times\Omega_0)$ and $\lambda>0$ on $\Omega_0\backslash\partial_1\Sigma$ because of \eqref{eq:F''>0}.
Moreover, one has
\begin{equation*}
    R(t,\Delta)=\left[2\lambda(\Delta)t+t^2\right]
    \left[1+\frac{c(\Delta)t^3+ \rho(t,\Delta)t^4}{2\lambda(\Delta)t+t^2}\right]=: \left[2\lambda(\Delta)t+t^2\right]
    \left[1+K(t,\Delta)\right],
\end{equation*}
and substituting in \eqref{eq:tilde R_0}, we deduce
\begin{align*}
     \tilde R^\eta_0(\Delta)&=\left[2\mathcal Q_{1}(\Delta)\right]^{-1}\int_{0}^{\eta}R(t,\Delta)^{1/2}\,dt\\
     &=\left[2\mathcal Q_{1}(\Delta)\right]^{-1}\int_{0}^{\eta}\left[2\lambda(\Delta)t+t^2\right]^{1/2}
    \left[1+K(t,\Delta)\right]^{1/2}\,dt.
\end{align*}
By using the binomial formula
\begin{equation*}
    \left[1+K(t,\Delta)\right]^{1/2}=1+\frac{1}2K(t,\Delta)
    +\sum_{m=2}^{\infty}a_m K(t,\Delta)^m,
\end{equation*}
with $a_m$ appropriate binomial coefficients, we infer
\begin{align*}
     \tilde R^\eta_0(\Delta)&=\left[2\mathcal Q_{1}(\Delta)\right]^{-1}\int_{0}^{\eta}\left[2\lambda(\Delta)t+t^2\right]^{1/2}\,dt\\
     &\qquad +\left[4\mathcal Q_{1}(\Delta)\right]^{-1}c(\Delta)\int_{0}^{\eta}\frac{t^3}{\left[2\lambda(\Delta)t+t^2\right]^{1/2}}\,dt\\
      &\qquad +\left[4\mathcal Q_{1}(\Delta)\right]^{-1}\int_{0}^{\eta}\frac{ \rho(t,\Delta)t^4}{\left[2\lambda(\Delta)t+t^2\right]^{1/2}}\,dt\\
      &\qquad +\left[2\mathcal Q_{1}(\Delta)\right]^{-1}\int_{0}^{\eta}\left[2\lambda(\Delta)t+t^2\right]^{1/2}\sum_{m=2}^{\infty}a_m K(t,\Delta)^m\,dt\\
      &=:L_1(\Delta)+L_2(\Delta)+L_3(\Delta)+L_4(\Delta).
\end{align*}
Let us analyze all the contributions $L_i(\Delta)$, $i=1,2,3,4$ and,  being explicitly computed, we carried out in full details the first two terms. For this,  
routine integrations show that
\begin{align*}
    \int_{0}^{\eta} \left[2\lambda(\Delta)t+t^2\right]^{1/2}\,dt&=\lambda(\Delta)\int_{0}^{\eta}\sqrt{\left(\frac{t+\lambda(\Delta)}{\lambda(\Delta)}\right)^2-1}\,dt\\
    &=\lambda(\Delta)^2\int_1^{1+\frac{\eta}{\lambda(\Delta)}}\sqrt{y^2-1}\, dy\\
    &=\frac{\lambda(\Delta)^2}{2}\left[\left(1+\frac{\eta}{\lambda(\Delta)}\right)\sqrt{\frac{\eta^2}{\lambda(\Delta)^2}+\frac{2\eta}{\lambda(\Delta)}}\right.\\
    &\qquad \left.-\ln\left(\sqrt{\frac{\eta^2}{\lambda(\Delta)^2}+\frac{2\eta}{\lambda(\Delta)}}+1+\frac{\eta}{\lambda(\Delta)}\right)\right]\\
    &=\frac{1}{2}\left[\left(\lambda(\Delta)+\eta\right)\sqrt{\eta^2+2\eta\lambda(\Delta)}\right]+\frac{\lambda(\Delta)^2}{2}\ln\left(\lambda(\Delta)\right)\\
    &\qquad -\frac{\lambda(\Delta)^2}{2}\ln\left(\sqrt{\eta^2+2\eta\lambda(\Delta)}+\lambda(\Delta)+\eta\right),
\end{align*}
and 
\begin{align*}
    \int_{0}^{\eta}\frac{t^3}{\left[2\lambda(\Delta)t+t^2\right]^{1/2}}\,dt&= -5\lambda(\Delta)^3\ln\left(\sqrt{\frac{\eta}{2\lambda(\Delta)}}+\sqrt{\frac{\eta}{2\lambda(\Delta)}+1}\right)\\
    &\qquad +\frac13\sqrt{\eta^5(\eta+2\lambda(\Delta))}
    -\frac{5}6\lambda(\Delta)(\eta-3\lambda(\Delta))\sqrt{\eta^2+2\eta\lambda(\Delta)}\\
    &=-\frac{5\lambda(\Delta)^3}{2}\ln\left(\eta+\lambda(\Delta)+\sqrt{\eta^2+2\eta\lambda(\Delta)}\right)+\frac{5\lambda(\Delta)^3}{2}\ln(\lambda(\Delta))\\
    &\qquad +\frac13\sqrt{\eta^5(\eta+2\lambda(\Delta))}-\frac{5}6\lambda(\Delta)(\eta-3\lambda(\Delta))\sqrt{\eta^2+2\eta\lambda(\Delta)}.
\end{align*}
Then, since $\lambda(\Delta)\in NR(\Omega_0)$ and $\lambda(\Delta)\ln\lambda(\Delta)\in NR(\Omega_0)$ (see \cite[Lemma 5.1]{CGS}), we can state that $L_i(\Delta)\in NR(\Omega_0)$, for $i=1,2$, that is the ``leading terms'' in $\tilde R_0^\eta(\Delta)$.
The analysis of $L_i(\Delta)$, for $i=3,4$ is straightforward, in view of the arguments carried out in \cite{CGS}. Indeed, the integrand in these terms can be recast as the corresponding ones of \cite{CGS} multiplied by $\left[2\lambda(\Delta)t+t^2\right]$ and thus they are more regular at $t=0$ than the ones already discussed there.  
Hence, we can conclude that $\tilde R_0^\eta(\Delta)\in NR(\Omega_0)$.

Concerning $R_1(\Delta)$, we can proceed in the same way and we arrive to study
\begin{equation*}
    \tilde R^\eta_1(\Delta):=\int_{0}^{\eta}(z_1(\Delta)+t)\sqrt{f_\Delta(z_1(\Delta)+t)}\,dt=
    z_1(\Delta)\tilde R^\eta_0(\Delta)+\int_{0}^{\eta}t\sqrt{f_\Delta(z_1(\Delta)+t)}\,dt.
\end{equation*}
Therefore, we can say that $\tilde R^\eta_1(\Delta)\in NR(\Omega_0)$ because $z_i(\Delta)$ belong to $NR(\bar\Sigma)$ (cf. \eqref{eq:z_i-NR}) and the last integral can be treated exactly as in the proof of $\tilde R^\eta_0(\Delta)\in NR(\Omega_0)$.
In conclusion, $R_0$ and $R_1$ belong to $NR(\Omega_0)$ and the proof is complete.
\end{proof}
\begin{proof}[Proof of Proposition \ref{prop:In}]
In view of \cite[Proposition 5.1]{CGS}, we readily obtain
\begin{align*}
    \frac{1}{\sqrt2}\int_{z_1}^{z_2}\frac{z^n}{\sqrt{f_\Delta(z)}}\,dz + \frac{1}{\sqrt{2}}\mathcal Q_1z_1^n \ln(\pi_1)\in C(\Omega_1), \\
    \frac{1}{\sqrt2}\int_{z_1}^{z_2}\frac{z^n}{\sqrt{f_\Delta(z)}}\,dz + \frac{1}{\sqrt{2}}\mathcal Q_2z_2^n \ln(\pi_1)\in C(\Omega_2),
\end{align*}
for $n=0,1$ and
\begin{align*}
    \frac{1}{\sqrt2}\int_{z_1}^{z_2}\frac{1}{\sqrt{f_\Delta(z)}}\,dz  
 + \frac{1}{\sqrt{2}}\mathcal Q_1 \ln(\pi_1) + \frac{1}{\sqrt{2}} \mathcal Q_2  \ln(\pi_2) = \tilde{\Psi}_0 \in NR(\Omega_0),
 \\
 \frac{1}{\sqrt2}\int_{z_1}^{z_2}\frac{z}{\sqrt{f_\Delta(z)}}\,dz+ \frac{1}{\sqrt{2}}\mathcal Q_1z_1  \ln(\pi_1) + \frac{1}{\sqrt{2}}\mathcal Q_2z_2  \ln(\pi_2)
 = \tilde{\Psi}_1 \in NR(\Omega_0).
\end{align*}
Then, the continuity of $R_n$ implies \eqref{eq:InCont}, while Lemma \ref{Lem:Rn} gives
\begin{align}
    I_0  
 + \frac{1}{\sqrt{2}}\mathcal Q_1 \ln(\pi_1) + \frac{1}{\sqrt{2}} \mathcal Q_2  \ln(\pi_2) = {\Psi}_0 \in NR(\Omega_0),
 \label{eq:condiI0_NR}\\
 I_1 + \frac{1}{\sqrt{2}}\mathcal Q_1z_1  \ln(\pi_1) + \frac{1}{\sqrt{2}}\mathcal Q_2z_2  \ln(\pi_2)
 = {\Psi}_1 \in NR(\Omega_0),
 \label{eq:condiI1_NR}
\end{align}
with
\begin{equation}\label{eq:Psin}
    {\Psi}_0(\Delta;\e) := \tilde{\Psi}_0(\Delta) + \e^2R_0(\Delta;\e), \qquad 
    {\Psi}_1(\Delta;\e) := \tilde{\Psi}_1(\Delta) + \e^2R_1(\Delta;\e)
\end{equation}
and the proof is complete.
\end{proof}
\begin{rem}\label{rem:o1o2}
As already mentioned before, the regularity results \eqref{eq:InNR}  proved in      in Proposition \ref{prop:In} refers solely to the set $\Omega_0$, as they are instrumental to  prove the existence and uniqueness of Maxwell solution close to 
$\Delta_0\in\Omega_0$. 
However, one could split the analysis of the integrals $I_n$ close to the two points $z_1(\Delta)$ and $z_2(\Delta)$ separately, and give more refined regularity results there, as done for instance in \eqref{eq:Q_i-NR}, see \cite{CGS} for further details.
Analogously, a more refined version (again involving   $\Omega_1$ and $\Omega_2$) of the results contained in Lemma \ref{Lem:Rn} can also be proved. 
Finally, it is worth mentioning that the properties \eqref{eq:InCont} are not needed at this stage,  but they will be crucial afterwards, while studying the energy of other (non-Maxwell) simple solutions (if any), which are defined away from $\Delta_0$; see Proposition \ref{prop:no-Maxw} and Proposition \ref{prop:less} below. 
\end{rem}
After studying the behavior of the functions $I_n(\Delta;\e)$, the next step of our proof of Theorem \ref{thm:Maxwell} will consist in solving system \eqref{eq:I_0,1} in a neighborhood of the Maxwell point $\Delta_0$, see \eqref{eq:Maxwell-p}.
With this in mind and following \cite{CGS}, we introduce the scaling
\begin{equation}\label{eq:h_i}
    h_i=\exp\left(\mu_i k_i-\frac{c_i(r)}{\e}\right), \qquad \qquad i=1,2,
\end{equation}
where
\begin{align}
    c_1(r)&:=\frac{2\sqrt2(\beta_0-r)}{B_1(\beta_0-\alpha_0)}, \qquad \qquad 
    B_1:=\left[2F''(\alpha_0)\right]^{-\frac12}, \label{eq:c1-B1}\\
    c_2(r)&:=\frac{2\sqrt2(r-\alpha_0)}{B_2(\beta_0-\alpha_0)}, \qquad \qquad 
    B_2:=\left[2F''(\beta_0)\right]^{-\frac12}, \label{eq:c2-B2}\\
    \mu_i&:=\left[B_i(\beta_0-\alpha_0)\right]^{-1}. \qquad \qquad i=1,2. \label{eq:mu_i}
\end{align}
The transformation \eqref{eq:h_i} is defined for $k=(k_1,k_2)\in\R^2$, $\e>0$ and $r\in(\alpha_0,\beta_0)$.
If $(k,\e,r)$ is such that $h=(h_1, h_2)$, defined by \eqref{eq:h_i}, belongs to $\mathcal{H}=\pi(\Sigma)$, where the transformation $\pi$ is given by \eqref{eq:trans-pi}, then we call $(k,\e,r)$ \textbf{compatible} and $\Delta=\pi^{-1}(h)$ is an admissible pair; let us denote the resulting map as
\begin{equation*}
    \Delta=D(k,\e,r).
\end{equation*}
Conversely, given any admissible pair $\Delta$ and any $\e>0$, we can define $h=\pi(\Delta)$ and solve \eqref{eq:h_i} for $k$; we denote this mapping by
\begin{equation*}
    k=K(\Delta,\e,r).
\end{equation*}
Given any function $\varphi:=\varphi(\Delta)$, we use the notation
\begin{equation*}
    \varphi^*(k,\e,r):=\varphi(D(k,\e,r)).
\end{equation*}
In order to study implicitly \eqref{eq:I_0,1} close to the Maxwell point and for sufficiently small $\e>0$, 
we need to extend smoothly functions  $\varphi^*$ also for negative values of $\e$, namely to a neighborhood in $\R^4$ of the set
\begin{equation*}
    \mathcal{E}_0:=\left\{(k,\e,r) \, : \, k\in\R^2, \,  \e=0, \, r\in(\alpha_0,\beta_0)\right\}.
\end{equation*}
The needed extension is guaranteed  by the following lemma, proved in \cite{CGS}, and where the regularity properties introduced in Definition \ref{def:NR}   play a crucial role. 
We say that a neighborhood $\mathcal{K}$ of $\mathcal{E}_0$ is compatible if every $(k,\e,r)\in\mathcal{K}$, with $\e>0$, is compatible. 
Finally, given any $C^1$ function $\Psi=\Psi(k,\e,r)$, we write
\begin{equation*}
    \nabla\Psi=\left(\frac{\partial\Psi}{\partial k_1}, \frac{\partial\Psi}{\partial k_2}, \frac{\partial\Psi}{\partial\e}, \frac{\partial\Psi}{\partial r}\right).
\end{equation*} 
\begin{lem}\label{lem:CGS-ext}
Let
\begin{equation*}
    \varphi=\varphi_0+\varphi_1\ln\pi_1+\varphi_2\ln\pi_2,
\end{equation*}
with
\begin{equation*}
    \varphi_0,\varphi_1,\varphi_2\in NR(\Omega_0), \qquad \qquad 
    \varphi_1(\Delta_0)=\varphi_2(\Delta_0)=0.
\end{equation*}
Then, there exists a compatible neighborhood $\mathcal{K}$ of $\mathcal{E}_0$ such that
\begin{itemize}
    \item $\varphi^*$ has a $C^1$ extension to $\mathcal{K}$; 
    \item $\nabla\varphi^*=0$ on $\mathcal{E}_0$.
\end{itemize}
\end{lem}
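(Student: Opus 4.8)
The plan is to write $\varphi^*$ explicitly in the variables $(k,\e,r)$ via the scaling \eqref{eq:h_i}, and then to exploit the exponential smallness of $h=\pi(\Delta)$ as $\e\to0^+$ in order to show that $\varphi^*$ and all its first derivatives extend continuously up to $\mathcal{E}_0$, with the gradient vanishing there; the extension to $\e\le0$ is then obtained by a constant gluing. First I would record the explicit form of $\varphi^*$. Writing $\hat\varphi_j(h)=\varphi_j(\pi^{-1}(h))$ and using $\ln\pi_i=\ln h_i=\mu_ik_i-c_i(r)/\e$ from \eqref{eq:h_i}, one has
\begin{equation*}
    \varphi^*(k,\e,r)=\hat\varphi_0(h)+\sum_{i=1}^2\hat\varphi_i(h)\Big(\mu_ik_i-\frac{c_i(r)}{\e}\Big),\qquad h_i=\exp\Big(\mu_ik_i-\frac{c_i(r)}{\e}\Big).
\end{equation*}
For $(k,r)$ in a compact set and $\e>0$ small, $h$ lies in the neighborhood $\mathcal{U}\subset\mathcal{H}$ of $0$ where the $\hat\varphi_j$ are defined, and since $\hat\varphi_j\in C^1(\mathcal{U}\setminus\{0\})$ and the scaling map $(k,\e,r)\mapsto h$ is smooth with $h\neq0$ for $\e>0$, the function $\varphi^*$ is $C^1$ on the compatible region. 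The only issue is the behaviour as $\e\to0^+$.

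The key quantitative input is that, on any compact subset of $(\alpha_0,\beta_0)$, one has $c_i(r)\ge c_*>0$ by \eqref{eq:c1-B1}--\eqref{eq:c2-B2}, so that $0\le h_i\le \exp(\mu_ik_i-c_*/\e)$ decays exponentially as $\e\to0^+$, uniformly for $(k,r)$ in compact sets. Combining this with the nearly regular bounds of Definition \ref{def:NR} — namely $\hat\varphi_j(h)-\hat\varphi_j(0)=\mathcal{O}(|h|^{1-s})$ and $\nabla_h\hat\varphi_j(h)=(\mathcal{O}(h_1^{-s}),\mathcal{O}(h_2^{-s}))$ with $s<1$ — and with the crucial hypotheses $\hat\varphi_i(0)=\varphi_i(\Delta_0)=0$ for $i=1,2$, I would show that every term of $\varphi^*$ and of $\nabla\varphi^*$ carrying a factor $\ln h_i$ or a negative power of $\e$ is bounded by $C\e^{-N}\exp(-(1-s)c_*/\e)$ for some $N$, and hence tends to $0$. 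Explicitly, the derivatives of the scaling satisfy $\partial_{k_j}h_i=\mu_i\delta_{ij}h_i$, $\partial_\e h_i=c_i(r)\e^{-2}h_i$ and $\partial_r h_i=-c_i'(r)\e^{-1}h_i$; the most dangerous contributions come from $\partial_\e$, where $\nabla_h\hat\varphi_i\cdot\partial_\e h=\mathcal{O}(h_i^{-s})\cdot c_i\e^{-2}h_i=\mathcal{O}(h_i^{1-s}\e^{-2})$ and $\hat\varphi_i\,\partial_\e\ln h_i=\mathcal{O}(|h|^{1-s})\cdot c_i\e^{-2}$, both of which are killed by the exponential decay. Note that the vanishing condition $\varphi_i(\Delta_0)=0$ is precisely what removes the otherwise singular leading term $-\hat\varphi_i(0)\,c_i(r)/\e$. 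Consequently $\varphi^*\to\varphi_0(\Delta_0)$ and $\nabla\varphi^*\to0$ as $\e\to0^+$, uniformly on compact subsets of $\{\e=0\}$.

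Finally, I would define the extension of $\varphi^*$ to $\e\le0$ by the constant value $\varphi_0(\Delta_0)$. The partial derivatives of $\varphi^*$ (computed for $\e>0$) extend continuously to $\e=0$ with boundary value $0$, which coincides with the identically zero derivatives of the constant extension; since the two pieces also agree in value on $\{\e=0\}$, the standard matching criterion ensures that the glued function is $C^1$ on a compatible neighborhood $\mathcal{K}$ of $\mathcal{E}_0$ and satisfies $\nabla\varphi^*=0$ on $\mathcal{E}_0$, as claimed. The main obstacle is the uniform bookkeeping of the $\partial_\e$ terms, where factors $\e^{-2}$ appear: one must verify that the nearly regular gradient bound $\mathcal{O}(h_j^{-s})$ combines with $\partial_\e h_j=c_j\e^{-2}h_j$ to leave a net positive power $h_j^{1-s}$, so that the exponential decay of $h_j$ dominates every polynomial blow-up in $\e^{-1}$ for all $s<1$.
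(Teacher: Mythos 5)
Your argument is correct and is essentially the standard one: the paper does not reprove this lemma but defers to \cite[Lemma 6.1]{CGS}, and your computation (chain rule in the scaling \eqref{eq:h_i}, with the vanishing condition $\varphi_i(\Delta_0)=0$ removing the $-c_i(r)/\e$ singularity and the exponential decay of $h_i$ absorbing every negative power of $\e$ coming from the nearly regular bounds) reconstructs precisely that proof. Nothing essential is missing.
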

For the proof of Lemma \ref{lem:CGS-ext} see \cite[Lemma 6.1]{CGS}. 
Thanks to Lemma \ref{lem:CGS-ext} we can then prove the following result.

\begin{prop}\label{prop:pre-implicit}
There exist a compatible neighborhood $\mathcal{K}$ of $\mathcal{E}_0$ and a $C^1$ function $W=(W_1,W_2)$ on $\mathcal{K}$, with
\begin{equation}\label{eq:gradW}
    \nabla W_i=0, \qquad \qquad \mbox{ on } \mathcal{E}_0,
\end{equation}
such that, given $(k,\e,r)\in\mathcal{K}$ with $\e>0$ and $\Delta=D(k,\e,r)$, one has
\begin{equation}\label{eq:k-W}
    k=W(k,\e,r)  
\end{equation}
if and only if $\Delta$ is a solution of \eqref{eq:I_0,1}.
\end{prop}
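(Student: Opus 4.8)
The plan is to transcribe the two equations of \eqref{eq:I_0,1} into the scaled variables $k=(k_1,k_2)$ introduced in \eqref{eq:h_i}, and to split each equation into a genuinely singular part, which must cancel against the right-hand side, and a remainder governed by Lemma \ref{lem:CGS-ext}. Starting from the near-regular decompositions \eqref{eq:condiI0_NR}--\eqref{eq:condiI1_NR}, I would substitute the defining relation $\ln\pi_i=\mu_i k_i-c_i(r)/\e$ of the scaling. The only obstruction to applying Lemma \ref{lem:CGS-ext} directly is that the coefficients $\mathcal Q_1,\mathcal Q_2$ of the logarithms in $I_0$ (and $\mathcal Q_1z_1,\mathcal Q_2z_2$ in $I_1$) do not vanish at $\Delta_0$. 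I would therefore peel off their values at the Maxwell point, writing $\mathcal Q_i=B_i+(\mathcal Q_i-B_i)$ and $\mathcal Q_iz_i=B_iz_i(\Delta_0)+\big(\mathcal Q_iz_i-B_iz_i(\Delta_0)\big)$, where $z_1(\Delta_0)=\alpha_0$, $z_2(\Delta_0)=\beta_0$ and $B_i=\mathcal Q_i(\Delta_0)$ by \eqref{eq:Q_i} and \eqref{eq:c1-B1}--\eqref{eq:c2-B2}.

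The decisive bookkeeping step is that, after substitution, the constant parts reproduce exactly the singular right-hand sides together with terms linear in $k$. Using $\mu_i=[B_i(\beta_0-\alpha_0)]^{-1}$, one verifies the identities
\begin{equation*}
  \tfrac{1}{\sqrt2}\big(B_1c_1+B_2c_2\big)=2,\qquad
  \tfrac{1}{\sqrt2}\big(\alpha_0 B_1c_1+\beta_0 B_2c_2\big)=2r,
\end{equation*}
which follow immediately from \eqref{eq:c1-B1}--\eqref{eq:c2-B2} and are precisely the relations the constants $c_i$ were designed to satisfy. Hence the $1/\e$ contributions of the constant parts match $2\e^{-1}$ and $2r\e^{-1}$ and drop out, while the linear parts leave $(k_1+k_2)/[\sqrt2(\beta_0-\alpha_0)]$ and $(\alpha_0 k_1+\beta_0 k_2)/[\sqrt2(\beta_0-\alpha_0)]$. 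Thus $I_0=2\e^{-1}$ reduces to
\begin{equation*}
  \frac{k_1+k_2}{\sqrt2(\beta_0-\alpha_0)}=\Upsilon_0,\qquad
  \Upsilon_0:=\Big[\Psi_0-\tfrac{1}{\sqrt2}(\mathcal Q_1-B_1)\ln\pi_1-\tfrac{1}{\sqrt2}(\mathcal Q_2-B_2)\ln\pi_2\Big]^{*},
\end{equation*}
and $I_1=2r\e^{-1}$ reduces to $(\alpha_0 k_1+\beta_0 k_2)/[\sqrt2(\beta_0-\alpha_0)]=\Upsilon_1$, where $\Upsilon_1$ is obtained by replacing $\Psi_0$ with $\Psi_1$ and $\mathcal Q_i-B_i$ with $\mathcal Q_iz_i-B_iz_i(\Delta_0)$.

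Now the coefficients of $\ln\pi_1,\ln\pi_2$ appearing in $\Upsilon_0,\Upsilon_1$ are near-regular on $\Omega_0$ (by \eqref{eq:Q_i-NR} and since $z_i\in NR(\bar\Sigma)$) and vanish at $\Delta_0$, while $\Psi_0,\Psi_1\in NR(\Omega_0)$ by \eqref{eq:condiI0_NR}--\eqref{eq:condiI1_NR}; so $\Upsilon_0,\Upsilon_1$ are exactly of the form treated by Lemma \ref{lem:CGS-ext}, which supplies a compatible neighborhood $\mathcal K$ of $\mathcal E_0$ on which $\Upsilon_0,\Upsilon_1$ admit $C^1$ extensions with $\nabla\Upsilon_0=\nabla\Upsilon_1=0$ on $\mathcal E_0$. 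Finally I would invert the constant-coefficient system above, whose matrix $\left(\begin{smallmatrix}1&1\\\alpha_0&\beta_0\end{smallmatrix}\right)$ has determinant $\beta_0-\alpha_0\neq0$, obtaining
\begin{equation*}
  W_1:=\sqrt2\,(\beta_0\Upsilon_0-\Upsilon_1),\qquad
  W_2:=\sqrt2\,(\Upsilon_1-\alpha_0\Upsilon_0),
\end{equation*}
so that $k=W(k,\e,r)$ holds if and only if $\Delta=D(k,\e,r)$ solves \eqref{eq:I_0,1}. Being constant linear combinations of $\Upsilon_0,\Upsilon_1$, $W_1$ and $W_2$ are $C^1$ on $\mathcal K$ and satisfy \eqref{eq:gradW}.

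The main obstacle is not conceptual but lies in the exact matching of the singular-in-$\e$ terms and in checking that every leftover logarithmic coefficient is near-regular and vanishes at $\Delta_0$, so that Lemma \ref{lem:CGS-ext} applies verbatim. The one genuinely new ingredient with respect to \cite{CGS}, namely that the remainders $\e^2R_0,\e^2R_1$ hidden in $\Psi_0,\Psi_1$ through \eqref{eq:Psin} do not destroy this near-regular structure (and, carrying the factor $\e^2$, contribute only terms with vanishing gradient on $\mathcal E_0$), has already been secured by Lemma \ref{Lem:Rn}; with that in hand the remaining argument follows the scheme of \cite{CGS}.
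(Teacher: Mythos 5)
Your proposal is correct and follows essentially the same route as the paper: the same peeling off of $B_i=\mathcal Q_i(\Delta_0)$ and $\xi_iB_i$ from the logarithmic coefficients so that the leftover coefficients vanish at $\Delta_0$ and Lemma \ref{lem:CGS-ext} applies, the same cancellation of the $\e^{-1}$ terms via the identities $\tfrac{1}{\sqrt2}(B_1c_1+B_2c_2)=2$ and $\tfrac{1}{\sqrt2}(\alpha_0B_1c_1+\beta_0B_2c_2)=2r$, and the same inversion of the constant matrix $\bigl(\begin{smallmatrix}1&1\\ \alpha_0&\beta_0\end{smallmatrix}\bigr)$ yielding exactly the paper's $W_1=\sqrt2(\beta_0 S_0-S_1)$, $W_2=\sqrt2(S_1-\alpha_0 S_0)$ with your $\Upsilon_i$ playing the role of $S_i$. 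Your closing remark correctly identifies Lemma \ref{Lem:Rn} as the only genuinely new ingredient relative to \cite{CGS}.
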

\begin{proof}
By the definitions \eqref{eq:c1-B1}-\eqref{eq:c2-B2} of $B_i$ and \eqref{eq:Q_i}, we have
\begin{equation*}
    \mathcal{Q}_i(\Delta_0)=B_i, \qquad \qquad i=1,2.
\end{equation*}
Hence, letting $\xi_1=\alpha_0$, $\xi_2=\beta_0$, thanks to \eqref{eq:Q_i-NR} and the fact that $z_i(\Delta)$ belong to $NR(\bar\Sigma)$ (because of \eqref{eq:z_i-NR}), we end up with
\begin{equation}\label{eq:p-q}
    \mathcal{Q}_i(\Delta)=B_i+q_i(\Delta), \qquad \qquad \mathcal{Q}_i(\Delta)z_i(\Delta)=\xi_iB_i+p_i(\Delta),
\end{equation}
for $i=1,2$, where $p_i,q_i\in NR(\Omega_0)$ and $p_i(\Delta_0)=q_i(\Delta_0)=0$.
Thanks to Proposition \ref{prop:In},  in particular  \eqref{eq:condiI0_NR}-\eqref{eq:condiI1_NR}, and 
 using \eqref{eq:p-q}, we obtain
\begin{align*}
    I_0+\frac{1}{\sqrt{2}}B_1\ln(\pi_1)+\frac{1}{\sqrt{2}}B_2\ln(\pi_2)&=\Psi_0-\frac{1}{\sqrt2}q_1\ln(\pi_1)-\frac{1}{\sqrt2}q_2\ln(\pi_2),\\
    I_1+\frac{1}{\sqrt{2}}\alpha_0B_1\ln(\pi_1)+\frac{1}{\sqrt{2}}\beta_0 B_2\ln(\pi_2)&=\Psi_1-\frac{1}{\sqrt2}p_1\ln(\pi_1)-\frac{1}{\sqrt2}p_2\ln(\pi_2),
\end{align*}
with $\Psi_i \in NR(\Omega_0)$, $i=0,1$.
Then, we can apply Lemma \ref{lem:CGS-ext} to the right hand side of the above  equalities   and deduce that there exist a compatible neighborhood $\mathcal{K}$ of $\mathcal{E}_0$ and functions $S_0, S_1\in C^1(\mathcal{K})$, with $\nabla S_0=\nabla S_1=0$ on $\mathcal{E}_0$ such that
\begin{align*}
    S_0(k,\e,r)&=I_0^*(k,\e,r)+\frac{1}{\sqrt2}B_1\ln(h_1)+\frac{1}{\sqrt2}B_2\ln(h_2),\\
     S_1(k,\e,r)&=I_1^*(k,\e,r)+\frac{1}{\sqrt2}\alpha_0B_1\ln(h_1)+\frac{1}{\sqrt2}\beta_0B_2\ln(h_2).
\end{align*}
Substituting \eqref{eq:h_i}-\eqref{eq:c1-B1}-\eqref{eq:c2-B2}-\eqref{eq:mu_i} and multiplying by $\sqrt2(\beta_0-\alpha_0)$, we conclude
\begin{align*}
    \sqrt2(\beta_0-\alpha_0)I_0^*(k,\e,r)&=-k_1-k_2+2\sqrt2(\beta_0-\alpha_0)\e^{-1}+\sqrt2(\beta_0-\alpha_0)S_0(k,\e,r),\\
    \sqrt2(\beta_0-\alpha_0)I_1^*(k,\e,r)&=-\alpha_0k_1-\beta_0k_2+
    2\sqrt2(\beta_0-\alpha_0)r\e^{-1}+\sqrt2(\beta_0-\alpha_0)S_1(k,\e,r),
\end{align*}
for all $(k,\e,r)\in\mathcal{K}$, with $\e>0$.
Thus, system \eqref{eq:I_0,1} is equivalent to 
\begin{equation*}
    \begin{cases}
        k_1+k_2=\sqrt2(\beta_0-\alpha_0)S_0(k,\e,r),\\
        \alpha_0k_1+\beta_0k_2=\sqrt2(\beta_0-\alpha_0)S_1(k,\e,r),
    \end{cases}
\end{equation*}
which readily gives
\begin{equation*}
    k_1=\sqrt2\beta_0S_0(k,\e,r)-\sqrt2S_1(k,\e,r), \qquad 
    k_2=\sqrt2S_1(k,\e,r)-\sqrt2\alpha_0S_0(k,\e,r).
\end{equation*}
Therefore, \eqref{eq:gradW} and \eqref{eq:k-W} hold true with 
\begin{align*}
    W_1(k,\e,r)&=\sqrt2\beta_0S_0(k,\e,r)-\sqrt2S_1(k,\e,r),\\
    W_2(k,\e,r)&=\sqrt2S_1(k,\e,r)-\sqrt2\alpha_0S_0(k,\e,r),
\end{align*}
and the proof is complete.
\end{proof}
Thanks to \eqref{eq:gradW} and the Implicit Function Theorem we can state that there exists a unique solution $k=k(\e,r)$ of \eqref{eq:k-W} in a neighborhood of a point $(a,0,r)$, where $a\in\R^2$ is fixed and for any $r$ satisfying \eqref{eq:R_delta}.
Indeed, \eqref{eq:gradW} implies 
\begin{equation}\label{eq:a_i}
    W_i(k,0,r)=a_i, \qquad \qquad \forall\, k\in\R^2, \, r\in(\alpha_0,\beta_0),
\end{equation}
for some $a=(a_1,a_2)\in\R^2$.
Hence, $(a,0,r)$ is a solution of \eqref{eq:k-W} for any $r\in(\alpha_0,\beta_0)$.
Moreover, \eqref{eq:gradW} also gives
\begin{equation*}
    \frac{\partial\tilde W}{\partial k}(a,0,r)=\mathbb{I}_2, \qquad \qquad \forall\, r\in(\alpha_0,\beta_0),
\end{equation*}
where $\tilde W(k,\e,r)=k-W(k,\e,r)$ and $\mathbb{I}_2$ is the identity matrix in $\R^2$, and a simple application of the Implicit Function Theorem gives the following result.
\begin{prop}\label{prop:implicit}
Choose $\delta\in\left(0,(\beta_0-\alpha_0)/2\right)$ and define $\mathcal{R}_\delta$ as in \eqref{eq:R_delta}. 
Moreover let $a$ be given by \eqref{eq:a_i} and let $\mathcal{K}$ be the neighborhood of $\mathcal{E}_0$ given by Proposition \ref{prop:pre-implicit}.
Then, there exist a neighborhood $\mathcal{B}_\delta$ of $a\in\R^2$ and $\e_\delta>0$ with $\mathcal{B}_\delta\times[-\e_\delta,\e_\delta]\times\mathcal{R}_\delta\subset\mathcal{K}$ and a $C^1$ function 
\begin{equation*}
    k_\delta : [-\e_\delta,\e_\delta]\times\mathcal{R}_\delta\to\mathcal{B}_\delta, 
\end{equation*}
such that for any $(\e,r)\in[-\e_\delta,\e_\delta]\times\mathcal{R}_\delta$, equation \eqref{eq:k-W} has exactly one solution $k=k_\delta(\e,r)\in\mathcal{B}_\delta$.
\end{prop}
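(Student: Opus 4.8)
The plan is to solve the fixed--point equation \eqref{eq:k-W}, rewritten as $\tilde W(k,\e,r)=k-W(k,\e,r)=0$, for $k$ as a function of $(\e,r)$ by means of the Implicit Function Theorem, and then to render the resulting solution uniform over the compact interval $\mathcal{R}_\delta$. First I would record the two structural facts already isolated before the statement. By \eqref{eq:a_i} the map $W(\cdot,0,\cdot)$ is identically equal to $a$ on the whole slice $\e=0$, so $\tilde W(a,0,r)=a-a=0$ for every $r\in(\alpha_0,\beta_0)$; thus $(a,0,r)$ is a root for each such $r$. Because $\nabla W_i=0$ on $\mathcal{E}_0$ (see \eqref{eq:gradW}), the partial Jacobian satisfies $\partial_k W(a,0,r)=0$, whence $\partial_k\tilde W(a,0,r)=\mathbb{I}_2$, an invertible matrix, with the \emph{same} value for every $r$. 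Note that the admissibility of negative $\e$ in the final statement is not an extra difficulty: it is precisely the $C^1$ extension of $W$ across $\e=0$ furnished by Proposition \ref{prop:pre-implicit} (through Lemma \ref{lem:CGS-ext}) that allows $(\e,r)$ to range in $[-\e_\delta,\e_\delta]\times\mathcal{R}_\delta$.

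The core of the argument is then the classical Implicit Function Theorem applied to $\tilde W$ at each base point $(a,0,r_0)$ with $r_0\in\mathcal{R}_\delta$: since $\partial_k\tilde W(a,0,r_0)=\mathbb{I}_2$ is invertible and $\tilde W$ is $C^1$ near $(a,0,r_0)$ (it is defined on the compatible neighborhood $\mathcal{K}$), one obtains a local $C^1$ solution $k=\kappa_{r_0}(\e,r)$, unique in a product neighborhood of $(a,0,r_0)$. Equivalently --- and this is the route I would prefer, since it yields the uniform constants directly --- I would run a parametrized contraction argument: because $\partial_k W$ is continuous and vanishes identically on the compact set $\{a\}\times\{0\}\times\mathcal{R}_\delta$, there is a neighborhood on which $\|\partial_k W\|\leq \tfrac12$, so $k\mapsto W(k,\e,r)$ is a $\tfrac12$--contraction in $k$; continuity of $W$ together with $W(a,0,r)=a$ makes this map send a fixed ball $\mathcal{B}_\delta$ around $a$ into itself once $|\e|\leq\e_\delta$. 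The Banach fixed--point theorem with parameters then delivers a unique fixed point $k_\delta(\e,r)\in\mathcal{B}_\delta$ depending in a $C^1$ manner on $(\e,r)$.

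Finally, by Proposition \ref{prop:pre-implicit}, for $\e>0$ the identity $k=W(k,\e,r)$ is equivalent to $\Delta=D(k,\e,r)$ solving \eqref{eq:I_0,1}, so the fixed point $k_\delta(\e,r)$ is exactly the sought solution; shrinking $\mathcal{B}_\delta$ and $\e_\delta$ if necessary guarantees $\mathcal{B}_\delta\times[-\e_\delta,\e_\delta]\times\mathcal{R}_\delta\subset\mathcal{K}$. I expect the only genuine subtlety to be the \emph{uniformity} in $r$: the bare Implicit Function Theorem produces an $r_0$--dependent neighborhood and a local solution, so one must upgrade these to a single $\e_\delta$ and a single ball $\mathcal{B}_\delta$ valid for all $r\in\mathcal{R}_\delta$. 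This is exactly where the compactness of $\mathcal{R}_\delta$ (the reason for restricting from $(\alpha_0,\beta_0)$ to $\mathcal{R}_\delta$) enters, combined with the fact that the linearization is the same invertible matrix $\mathbb{I}_2$ at every base point: a finite subcover of $\{a\}\times\{0\}\times\mathcal{R}_\delta$ by IFT neighborhoods, glued together using the local uniqueness, produces the single $C^1$ map $k_\delta$ on $[-\e_\delta,\e_\delta]\times\mathcal{R}_\delta$.
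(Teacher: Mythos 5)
Your proposal is correct and follows essentially the same route as the paper, which likewise observes that $W(k,0,r)\equiv a$ and $\partial\tilde W/\partial k(a,0,r)=\mathbb{I}_2$ on $\mathcal{E}_0$ and then invokes the Implicit Function Theorem. Your extra care about uniformity in $r$ over the compact set $\mathcal{R}_\delta$ (and the contraction--mapping reformulation) fills in a detail the paper leaves implicit in the phrase ``a simple application of the Implicit Function Theorem,'' but it is the same argument.
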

Now, we have all the tools to prove Theorem \ref{thm:Maxwell}.
\begin{proof}[Proof of Theorem \ref{thm:Maxwell}]
Let $k_\delta$ be the solution given by Proposition \ref{prop:implicit}; then, by using Proposition \ref{prop:pre-implicit} we conclude that $\Delta(\e,r)=D(k_\delta(\e,r),\e,r)$ solves system \eqref{eq:I_0,1}.
Moreover, since $k_\delta$ is bounded, \eqref{eq:trans-pi} and \eqref{eq:h_i} imply
\begin{equation}\label{eq:Delta_0}
    h=\pi(\Delta(\e,r))=\mathcal{O}(\exp(-C/\e)), \qquad \qquad \mbox{ as } \e\to0,
\end{equation}
uniformly for $r\in\mathcal{R}_\delta$. Hence, we have the first estimate in \eqref{eq:O-grande} because $\pi$ is diffeomorphic on $\bar\Sigma$ with $\pi(\Delta_0)=0$.
The remaining estimates follow from \eqref{eq:z_i-NR}, \eqref{eq:Delta_0} and the fact that, since $\alpha_\sigma$ and $\beta_\sigma$ are smooth functions of $\sigma$ near $\sigma_0$, one has
\begin{equation*}
    \alpha_\sigma=\alpha_0+\mathcal{O}(\exp(-C/\e)), \qquad \qquad \beta_\sigma=\beta_0+\mathcal{O}(\exp(-C/\e)).
\end{equation*}

It remains to prove the uniqueness of the solution to system \eqref{eq:I_0,1}.
To do this, we will take advantage of the uniqueness of the solution to \eqref{eq:k-W} given by Proposition \ref{prop:implicit}.
Let us use the notation 
\begin{equation*}
    \mbox{Sol}(\e,r):=\left\{\Delta\in\Sigma \, : \, \Delta \, \mbox{ is a solution of \eqref{eq:I_0,1}}\right\}.
\end{equation*}
For any $\Delta\in\Sigma$, \eqref{eq:z_i-sigma} holds true and, in particular, from \eqref{eq:f_delta}-\eqref{eq:fdelta_bounds} it follows that $f_\Delta(z)>0$ for any $z\in(z_1(\Delta),z_2(\Delta))$.
Thus, \eqref{eq:I_n} gives
\begin{equation*}
    z_1(\Delta) I_0(\Delta)<I_1(\Delta)<z_2(\Delta) I_0(\Delta),
\end{equation*}
and if $\Delta\in\mbox{Sol}(\e,r)$, then applying \eqref{eq:I_0,1} we obtain
\begin{equation}\label{eq:r-in-z}
    z_1(\Delta)<r<z_2(\Delta), \qquad \qquad \hbox{for any}\ \Delta\in\mbox{Sol}(\e,r).
\end{equation}
By using  \eqref{eq:condiI0_NR}-\eqref{eq:condiI1_NR}, since $z_i\in C(\Omega_0)$ and $\mathcal{Q}_i$, $z_2-z_1\in C^+(\Omega_0)$, we can solve \eqref{eq:I_0,1} to get
\begin{equation}\label{eq:ln-pi}
    \ln(\pi_i)=-\e^{-1}\mathcal{G}_i(\Delta,r)+\mathcal{V}_i(\Delta;\e),
\end{equation}
where $\mathcal{G}_i$, $i=1,2$ are given by  
\begin{equation}\label{eq:G_i}
    \mathcal{G}_1(\Delta,r):=\frac{2\sqrt2(z_2(\Delta)-r)}{\mathcal{Q}_1(\Delta)(z_2(\Delta)-z_1(\Delta))}, \;\qquad \mathcal{G}_2(\Delta,r):=\frac{2\sqrt2(r-z_1(\Delta))}{\mathcal{Q}_2(\Delta)(z_2(\Delta)-z_1(\Delta))},
\end{equation}
and $\mathcal{V}_i\in C(\Omega_0)$ for any $\e>0$. To be more precise, Lemma \ref{Lem:Rn} states that 
$R_n\in NR(\Omega_0)$ and, 
in view of \eqref{eq:Psin},  
$\mathcal{V}_i$, $i=1,2$, is given by a function depending only on $\Delta$ plus a linear combination of $\e^2R_n(\Delta;\e)$, $n=0,1$, thus it is bounded  for $\e\ll 1$.
Choose now $\delta\in\left(0,(\beta_0-\alpha_0)/2\right)$ and $\Omega_0=\Omega_0(\delta)$ such that 
\begin{equation*}
    |z_i(\Delta)-r|\geq\frac{\delta}{2}, \qquad \qquad \mbox{for any}\ \Delta\in\Omega_0, \, r\in\mathcal{R}_\delta. 
\end{equation*}
Then, by \eqref{eq:G_i}, we deduce
\begin{equation*}
    \mathcal{G}_i(\Delta,r)\geq c>0, \qquad \qquad \mbox{for any}\ \Delta\in\Omega_0, \, r\in\mathcal{R}_\delta,
\end{equation*}
and \eqref{eq:ln-pi} implies
\begin{equation}\label{eq:p_i(Delta)}
    \pi_i(\Delta)\leq \bar C\exp(-C/\e), \qquad \qquad \mbox{for } (\Delta,\e,r)\in\mbox{Sol}_\delta,
\end{equation}
where
\begin{equation*}
    \mbox{Sol}_\delta:=\left\{(\Delta,\e,r)\, :\, \Delta\in\mbox{Sol}(\e,r)\cap\Omega_0, \, \e>0, \, r\in\mathcal{R}_\delta\right\}.
\end{equation*}
On the other hand, for $K(\Delta,\e,r)=(K_1(\Delta,\e,r),K_2(\Delta,\e,r))$,  \eqref{eq:h_i} and \eqref{eq:ln-pi} yield
\begin{equation*}
    \mu_i K_i(\Delta,\e,r)=\mathcal{V}_i(\Delta;\e)-\e^{-1}\nu_i(\Delta,r), \qquad \qquad \mbox{ for } (\Delta,\e,r)\in\mbox{Sol}_\delta,
\end{equation*}
where $\nu_i(\Delta,r):=\mathcal{G}_i(\Delta,r)-c_i(r)$ and $c_i$ is defined in \eqref{eq:c1-B1}-\eqref{eq:c2-B2}.
Notice that $\mathcal{G}_i(\Delta_0,r)=c_i(r)$ and so, \eqref{eq:p_i(Delta)} implies 
\begin{equation*}
    \e^{-1}|\nu_i(\Delta,r)|\leq \bar C\exp(-C/\e), \qquad \qquad \mbox{ for } (\Delta,\e,r)\in\mbox{Sol}_\delta,
\end{equation*}
and, as a consequence, 
\begin{equation}\label{eq:est-K}
    |K(\Delta,\e,r)|\leq \Gamma, \qquad \qquad \mbox{ for } (\Delta,\e,r)\in\mbox{Sol}_\delta,
\end{equation}
for some $\Gamma>0$.

Next, we can choose $\e>0$ sufficiently small such that $(K(\Delta,\e,r),\e,r)\in\mathcal{K}$, where $\mathcal{K}$ is the neighborhood of $\mathcal{E}_0$ given by Proposition \ref{prop:pre-implicit}, namely there exists $\e_1>0$ such that $(K(\Delta,\e,r),\e,r)\in\mathcal{K}$ is compatible for any $(\Delta,\e,r)\in\mbox{Sol}_\delta$ and any $\e\in(0,\e_1)$.
Therefore, thanks to Proposition \ref{prop:pre-implicit} we end up with 
\begin{equation}\label{eq:better}
    K(\Delta,\e,r)=W(K(\Delta,\e,r),\e,r), \qquad \qquad \mbox{ for } (\Delta,\e,r)\in\mbox{Sol}_\delta,  \, \e\in(0,\e_1).
\end{equation}
From \eqref{eq:est-K} it follows that we can extract a subsequence such that
\begin{equation}\label{eq:subseq}
    \lim_{\e_j\to0^+}K(\Delta,\e_j,r)= a\in\R^2,
\end{equation}
where $a$ may depend on $r$ and the choice of the subsequence. However, from \eqref{eq:a_i}, passing to the limit in \eqref{eq:better}, we  deduce that $a$ does not depend on any admissible $\Delta\in\Omega_0$ and $r\in\mathcal{R}_\delta$, and therefore  \eqref{eq:subseq} holds for any $\e$. 
Hence,
\begin{equation*}
    K(\Delta,\e,r)\in\mathcal{B}_\delta, \qquad \qquad \mbox{ for } (\Delta,\e,r)\in\mbox{Sol}_\delta,  \, \e\in(0,\e_2),
\end{equation*}
for some $\e_2\in(0,\e_1)$.
In conclusion, we proved that if $\Delta=\Delta(\e,r)$ solves \eqref{eq:I_0,1} for $\e>0$ sufficiently small and $r\in\mathcal{R}_\delta$, then we can define $h=\pi(\Delta)$, with $\pi$ given by \eqref{eq:trans-pi} and the corresponding $k=K(\Delta,\e,r)$, obtained by \eqref{eq:h_i}, solves \eqref{eq:k-W}. However, Proposition \ref{prop:implicit} establishes uniqueness of such a solution in $\mathcal{B}_\delta$ and then we can conclude that the solution $\Delta(\e,r)$ is unique.
Since simple solutions of \eqref{eq:min} are obtained from admissible pairs $\Delta(\e,r)$ satisfying \eqref{eq:I_0,1}, the proof of Theorem \ref{thm:Maxwell} is complete.
\end{proof}

\section{Properties of the Maxwell solution and minimization of the energy}\label{sec:min}
The goal of this section is to prove that the Maxwell solution given by Theorem \ref{thm:Maxwell} minimizes the energy functional \eqref{eq:energy} with constraint \eqref{eq:constraint-intro} among all its stationary points.
In view of the change variable $x=\e t$, \eqref{eq:energy} becomes
\begin{equation*}
    E[u]=\e\int_{-\e^{-1}}^{\e^{-1}}\left[\frac{Q(\e^2u'(\e t))}{\e^2}+F(u(\e t))\right]\,dt,
\end{equation*}
so that, if we define $z(t)=u(\e t)$, we obtain the functional
\begin{equation}\label{eq:energy_eps}
    E_\e[z]=\e\int_{-\e^{-1}}^{\e^{-1}}\left[\frac{Q(\e z'(t))}{\e^2}+F(z(t))\right]\,dt.
\end{equation}
Hence, we look for a minimum of \eqref{eq:energy_eps} over the class
\begin{equation}\label{eq:zepserre}
    \mathcal{Z}_{\e r}:=\left\{z\in H^1(-\e^{-1},\e^{-1})\, : \, z>0, \; \int_{-\e^{-1}}^{\e^{-1}} z(t)\,dt=2r\e^{-1} \right\}.
\end{equation}
Let us start by computing the energy of simple solutions, which is a function depending on the admissible pair $\Delta\in\Sigma$ denoted by $E_{\e r}(\Delta)$:
\begin{equation*}
    E_{\e r}(\Delta):=\e\int_{-\e^{-1}}^{\e^{-1}}\left[\frac{Q(\e z_\Delta'(t))}{\e^2}+F(z_\Delta(t))\right]\,dt,
\end{equation*}
where $z_\Delta$ stands for the simple solution of \eqref{eq:min} corresponding to $\Delta$.
As we have seen before, $z_\Delta$ satisfies \eqref{eq:f_delta} and the definition \eqref{eq:P_eps} 
of $P_\e$ gives
\begin{equation*}
    1-\e^2 f_\Delta(z)=\frac{1}{\sqrt{1+\e^2 (z'_\Delta)^2}}.
\end{equation*}
By using the explicit formula for $Q$ \eqref{eq:Q}, we deduce 
\begin{equation*}
    Q(\e z_\Delta')=\frac{\e^2 f_\Delta(z)}{1-\e^2 f_\Delta(z)},
\end{equation*}
and, as a trivial consequence, using the definitions \eqref{eq:Gibbs} and \eqref{eq:f_delta}, one has
\begin{equation*}
    \frac{Q(\e z_\Delta')}{\e^2}+F(z_\Delta)= \frac{f_\Delta(z_\Delta)}{1-\e^2 f_\Delta(z_\Delta)}+f_\Delta(z_\Delta)+\sigma z_\Delta+b.
\end{equation*}
Substituting in the definition of $E_{\e r}(\Delta)$, we obtain
\begin{align*}
    E_{\e r}(\Delta)&=2(\sigma r+b)+\e\int_{-\e^{-1}}^{\e^{-1}}\frac{f_\Delta(z_\Delta(t))}{1-\e^2 f_\Delta(z_\Delta(t))}\,dt+\e\int_{-\e^{-1}}^{\e^{-1}}f_\Delta(z_\Delta(t))\,dt,\\
    &=2(\sigma r+b)+\e\int_{-\e^{-1}}^{\e^{-1}}\frac{2f_\Delta(z_\Delta(t))-\e^2f_\Delta(z_\Delta(t))^2}{1-\e^2 f_\Delta(z_\Delta(t))}\,dt \\
    &=2(\sigma r+b)+ \e\int_{z_1(\Delta)}^{z_2(\Delta)}\frac{2f_\Delta(s)-\e^2f_\Delta(s)^2}{1-\e^2 f_\Delta(s)}\frac{1}{H^+_\e(f_\Delta(s))}\,ds, 
\end{align*}
where in the last passage we used the change of variable $s=z_\Delta(t)$ and \eqref{eq:z'=+}.
Hence, from \eqref{eq:defP^-1} it follows that
\begin{equation}\label{eq:en-simple}
        E_{\e r}(\Delta)=2(\sigma r+b)+ \e\int_{z_1(\Delta)}^{z_2(\Delta)}\sqrt{f_\Delta(s)(2-\e^2 f_\Delta(s))}\,ds,
\end{equation}
for any $\Delta\in\mbox{Sol}(\e,r)$.
Thanks to the formula \eqref{eq:en-simple}, we can prove the following result about the energy $E_{\e r}^0$ of the Maxwell solution denoted by $z_{\e r}$:
\begin{equation*}
    E_{\e r}^0:=E_{\e}[z_{\e r}].
\end{equation*}
We recall that the Maxwell solution is the unique simple solution close to the Maxwell point $\Delta_0$  given by Theorem \ref{thm:Maxwell} for $\e\ll 1$.
\begin{prop}\label{prop:enery-Maxwell}
The energy \eqref{eq:energy_eps} of the Maxwell solution has the asymptotic form
\begin{equation}\label{eq:en-Maxw}
    E_{\e r}^0=2(\sigma_0 r+b_0)+\e c_\e+\mathcal{O}(\exp(-C/\e)),
\end{equation}
where
\begin{equation}\label{eq:c_eps}
    c_\e:=\int_{\alpha_0}^{\beta_0}\sqrt{\left[F(s)-F(\beta_0)-\sigma_0(s-\beta_0)\right]\left[2-\e^2(F(s)-F(\beta_0)-\sigma_0(s-\beta_0))\right]}\,ds.
\end{equation}
\end{prop}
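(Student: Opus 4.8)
The plan is to evaluate the energy formula \eqref{eq:en-simple} at the Maxwell pair $\Delta=\Delta(\e,r)$ produced by Theorem \ref{thm:Maxwell}, and to show that each of its two pieces reduces, up to exponentially small errors, to the corresponding quantity frozen at the Maxwell point $\Delta_0$. Writing $\Delta(\e,r)=(\sigma,b)$, formula \eqref{eq:en-simple} reads
\[
E_{\e r}^0 = 2(\sigma r+b)+\e\int_{z_1(\Delta)}^{z_2(\Delta)}\sqrt{f_\Delta(s)(2-\e^2 f_\Delta(s))}\,ds.
\]
For the constant term, Theorem \ref{thm:Maxwell} gives $\sigma=\sigma_0+O(\exp(-C/\e))$ and $b=b_0+O(\exp(-C/\e))$, so since $r$ ranges in the bounded set $\mathcal{R}_\delta$ we immediately obtain $2(\sigma r+b)=2(\sigma_0 r+b_0)+O(\exp(-C/\e))$. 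For the integral, observe that at the Maxwell point $f_{\Delta_0}(s)=\Phi_{\sigma_0}(s)-b_0=F(s)-F(\beta_0)-\sigma_0(s-\beta_0)$ by \eqref{eq:Gibbs} and \eqref{eq:Maxwell-p}, so that $c_\e$ in \eqref{eq:c_eps} is precisely $\int_{\alpha_0}^{\beta_0}g_{\Delta_0}(s)\,ds$, where $g_\Delta:=\sqrt{f_\Delta(2-\e^2 f_\Delta)}$. Hence the whole statement reduces to proving
\[
\int_{z_1(\Delta)}^{z_2(\Delta)}g_\Delta(s)\,ds=\int_{\alpha_0}^{\beta_0}g_{\Delta_0}(s)\,ds+O\!\left(\e^{-1}\exp(-C/\e)\right),
\]
after which the prefactor $\e$ absorbs the $\e^{-1}$ and restores the clean rate $O(\exp(-C/\e))$.

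Next I would use that, by \eqref{eq:z_i-sigma}, $\alpha_0<z_1(\Delta)<z_2(\Delta)<\beta_0$, so the integration interval is strictly nested in $[\alpha_0,\beta_0]$ and I can split
\[
\int_{\alpha_0}^{\beta_0}g_{\Delta_0}-\int_{z_1}^{z_2}g_\Delta=\int_{\alpha_0}^{z_1}g_{\Delta_0}+\int_{z_2}^{\beta_0}g_{\Delta_0}+\int_{z_1}^{z_2}\left(g_{\Delta_0}-g_\Delta\right).
\]
The two boundary integrals are harmless: since $\alpha_0,\beta_0$ are the equal-depth wells of $\Phi_{\sigma_0}$, the function $f_{\Delta_0}$ has a double zero there with $f_{\Delta_0}(s)\le C(s-\alpha_0)^2$ near $\alpha_0$; thus $g_{\Delta_0}(s)\le C(s-\alpha_0)$ and $\int_{\alpha_0}^{z_1}g_{\Delta_0}\le C(z_1-\alpha_0)^2=O(\exp(-C/\e))$ by \eqref{eq:O-grande}, and likewise near $\beta_0$.

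The bulk term is the main obstacle, because the two integrands vanish at the common endpoints $z_1,z_2$ at different rates ($g_\Delta$ like a square root, $g_{\Delta_0}$ linearly), so one cannot simply bound $|g_{\Delta_0}-g_\Delta|$ by $|f_{\Delta_0}-f_\Delta|$. I would instead rationalize, writing $g_{\Delta_0}-g_\Delta=(g_{\Delta_0}^2-g_\Delta^2)/(g_{\Delta_0}+g_\Delta)$. The numerator is $\phi(f_{\Delta_0})-\phi(f_\Delta)$ with $\phi(t)=t(2-\e^2 t)$ having bounded derivative on the range $[0,\overline{F}^{-2}]$ fixed by Lemma \ref{lem:fdelta}; and since $f_\Delta(s)-f_{\Delta_0}(s)=-(\sigma-\sigma_0)s-(b-b_0)$ is affine in $s$ with coefficients $O(\exp(-C/\e))$, the numerator is $O(\exp(-C/\e))$ uniformly on $[\alpha_0,\beta_0]$. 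For the denominator I would use $\e<\overline{F}$ to get $2-\e^2 f_\Delta>1$, hence $g_{\Delta_0}+g_\Delta\ge g_\Delta\ge\sqrt{f_\Delta}$. Therefore
\[
\int_{z_1}^{z_2}|g_{\Delta_0}-g_\Delta|\,ds\le C\exp(-C/\e)\int_{z_1}^{z_2}\frac{ds}{\sqrt{f_\Delta}},
\]
and the remaining integral is controlled via the split \eqref{eq:splitIn}: since the Maxwell pair solves \eqref{eq:I_0,1} one has $I_0=2\e^{-1}$, while $R_0$ is bounded on $\bar\Sigma$ by Lemma \ref{Lem:Rn}, so $\int_{z_1}^{z_2}ds/\sqrt{f_\Delta}=\sqrt2(I_0-\e^2 R_0)=O(\e^{-1})$. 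This yields the claimed $O(\e^{-1}\exp(-C/\e))$ bound; collecting the three pieces, multiplying by $\e$, and combining with the constant-term estimate gives \eqref{eq:en-Maxw}--\eqref{eq:c_eps}, with $C$ adjusted, as usual, to absorb the polynomial prefactor.
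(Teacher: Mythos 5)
Your proposal is correct, and it reaches \eqref{eq:en-Maxw} by a genuinely different technical route than the paper. Both proofs start from \eqref{eq:en-simple} evaluated at the Maxwell pair and compare with the value frozen at $\Delta_0$, but the paper handles the integral term by showing, via a re-run of the Taylor-expansion analysis of Lemma \ref{Lem:Rn}, that $E_{\e r}(\Delta)$ belongs to $NR(\Omega_0)$, so that in the variables $h=\pi(\Delta)$ one has $\hat E_{\e r}(h)=\hat E_{\e r}(0)+\mathcal{O}(|h|^{1-s})$ for some $s<1$, and then invokes the exponential smallness of $h$ from \eqref{eq:Delta_0}. You avoid the nearly-regular machinery altogether: the rationalization $g_{\Delta_0}-g_\Delta=(g_{\Delta_0}^2-g_\Delta^2)/(g_{\Delta_0}+g_\Delta)$ exploits the fact that the squares differ by the affine function $(\sigma_0-\sigma)s+(b_0-b)$, which is $\mathcal{O}(\exp(-C/\e))$ by \eqref{eq:O-grande}, while the denominator is bounded below by $\sqrt{f_\Delta}$ thanks to $\e<\overline F$; the resulting weight $\int_{z_1}^{z_2}ds/\sqrt{f_\Delta}=\sqrt2(I_0-\e^2R_0)=\mathcal{O}(\e^{-1})$ is read off from \eqref{eq:splitIn}, the constraint $I_0=2\e^{-1}$ in \eqref{eq:I_0,1} and the continuity of $R_0$ on $\bar\Sigma$, and the quadratic vanishing of $f_{\Delta_0}$ at the equal-depth wells disposes of the two boundary strips $[\alpha_0,z_1]$ and $[z_2,\beta_0]$. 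What your approach buys is elementarity and self-containedness for this specific proposition; what the paper's route buys is a regularity statement about $E_{\e r}(\Delta)$ valid in a full neighborhood of $\Delta_0$, not just at the Maxwell pair. The loss of the factor $\e^{-1}$ in your intermediate bound is indeed harmless, since after multiplication by $\e$ (and, in any case, by shrinking $C$) the clean rate $\mathcal{O}(\exp(-C/\e))$ is recovered, uniformly for $r\in\mathcal{R}_\delta$.
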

\begin{proof}
By proceeding as in the proof of Lemma \ref{Lem:Rn}, we obtain that the integral function in \eqref{eq:en-simple} belongs to $NR(\Omega_0)$, that is, changing variables, $\hat E_{\e r}(h)$ is nearly regular: there exists $s<1$ such that
\begin{equation*}
    \hat E_{\e r}(h)=\hat E_{\e r}(0)+\mathcal{O}(|h|^{1-s}).
\end{equation*}
Since $h=0$ correspond to the Maxwell point $\Delta_0$, \eqref{eq:Maxwell-p} gives
\begin{align*}
    \hat E_{\e r}(0)&=2(\sigma_0 r+b_0)+\e\int_{\alpha_0}^{\beta_0}\sqrt{f_{\Delta_0}(s)[2-\e^2 f_{\Delta_0}(s)]}\,ds\\
    &=2(\sigma_0 r+b_0)+\e c_\e,
\end{align*}
where $c_\e$ is defined in \eqref{eq:c_eps} and we used \eqref{eq:Gibbs}-\eqref{eq:Maxwell-p}-\eqref{eq:f_delta}.
Therefore, we can conclude that \eqref{eq:en-Maxw} holds true thanks to exponentially smallness of $h$ given by \eqref{eq:Delta_0}.
\end{proof}
\begin{rem}
Notice that the quantity $2(\sigma_0 r+b_0)$ represents the minimum energy of \eqref{eq:energy} in the case $\e=0$, that is
\begin{equation*}
   \int_{-1}^1 F(u)\, dx
\end{equation*}
with the mass constraint \eqref{eq:constraint-intro}; see \cite{CGS}.
Moreover, the constant $c_\e$, defined in \eqref{eq:c_eps} and describing the first order correction in $\e$, satisfies
\begin{equation*}
    \lim_{\e\to0^+} c_\e=\int_{\alpha_0}^{\beta_0}\sqrt{2\left[F(s)-F(\beta_0)-\sigma_0(s-\beta_0)\right]}\,ds=:c_0,
\end{equation*}
where $c_0$ is a strictly positive constant depending only on the potential $F$.
In addition, the leading term in the difference
\begin{equation*}
     E_{\e r}^0-2(\sigma_0 r+b_0)=\e c_\e+\mathcal{O}(\exp(-C/\e))
\end{equation*}
does not depend on $r$.

Finally, if we assume that $F$ is a double well potential with wells of equal depth and $F(\alpha_0)=F(\beta_0)=0$, then $\sigma_0=b_0=0$ and
\begin{equation*}
    c_\e:=\int_{\alpha_0}^{\beta_0}\sqrt{F(s)\left[2-\e^2F(s)\right]}\,ds.
\end{equation*}
The latter constant represents the minimum energy of a transition between $\alpha_0$ and $\beta_0$ in the case of the renormalized energy studied in \cite[Proposition 2.5]{FPS} and \cite[Proposition 3.3]{FS}. 
\end{rem}

Now, we shall compare $E_{\e r}^0$ given in \eqref{eq:en-Maxw} with the energy of all other possible stationary points of \eqref{eq:energy_eps}-\eqref{eq:zepserre}, that is, solutions of \eqref{eq:min}.
Among them, let us start by observing that, given any simple solution $z:=z(t)$ of \eqref{eq:min}, its \textbf{reversal} $z^R:=z^R(t)=z(-t)$ is also a solution of \eqref{eq:min} because $Q''$ is an even function.
In particular, $(z^R)'<0$ in $(-\e^{-1},\e^{-1})$ and it is easy to check that $E_\e[z^R]=E_\e[z]$, that is the energy of a simple solution and its reversal are the same. 
Therefore, we are left to compare \eqref{eq:en-Maxw} with the energy of remaining solutions of \eqref{eq:min}, namely 
constants, (other) simple solutions, and non-monotone solutions. 
We start by analyzing the first case. 
\begin{prop}\label{prop:comp-const}
For any $\delta\in\left(0,(\beta_0-\alpha_0)/2\right)$ and $r\in\mathcal{R}_\delta$, we can choose $\e_\delta>0$ such that if $\e\in(0,\e_\delta)$, then $E^0_{\e r}$ is strictly less than the energy of constant solutions of \eqref{eq:min}.     
\end{prop}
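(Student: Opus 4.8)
The plan is to identify the constant solution explicitly, compute its energy, and compare it with the asymptotic expansion \eqref{eq:en-Maxw} of the Maxwell energy. First I would note that the mass constraint in \eqref{eq:min} pins down the constant: if $z\equiv c$, then $\int_{-\e^{-1}}^{\e^{-1}}c\,dt=2c\e^{-1}=2r\e^{-1}$ forces $c=r$, and $z\equiv r$ indeed solves the Euler--Lagrange equation in \eqref{eq:min} with the choice $\sigma=F'(r)$, the Neumann conditions being trivially satisfied. Since $Q(0)=0$, its energy, computed from \eqref{eq:energy_eps}, is simply
\begin{equation*}
    E_\e[r]=\e\int_{-\e^{-1}}^{\e^{-1}}F(r)\,dt=2F(r).
\end{equation*}

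Next I would rewrite $2F(r)=2(\sigma_0 r+\Phi_{\sigma_0}(r))$ using the definition \eqref{eq:Gibbs} of the Gibbs function, and set it against the leading term $2(\sigma_0 r+b_0)$ of $E^0_{\e r}$ in \eqref{eq:en-Maxw}. The difference of the leading terms is $2[\Phi_{\sigma_0}(r)-b_0]$, and the key observation is that this is strictly positive for every $r\in(\alpha_0,\beta_0)$. Indeed, recalling $b_0=\Phi_{\sigma_0}(\alpha_0)=\Phi_{\sigma_0}(\beta_0)$ together with property (3), according to which $\Phi_{\sigma_0}$ is strictly increasing on $(\alpha_0,\zeta_0)$ and strictly decreasing on $(\zeta_0,\beta_0)$, one sees that $\Phi_{\sigma_0}$ attains its two equal minima $b_0$ exactly at the endpoints, so that $\Phi_{\sigma_0}(r)>b_0$ throughout the open interval. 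By continuity of $\Phi_{\sigma_0}$ and compactness of $\mathcal{R}_\delta=[\alpha_0+\delta,\beta_0-\delta]$, this strict inequality is uniform: there is $\kappa_\delta>0$ such that $\Phi_{\sigma_0}(r)-b_0\geq\kappa_\delta$ for all $r\in\mathcal{R}_\delta$. This is precisely the point at which the restriction \eqref{eq:R_delta} on $r$ is used.

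Finally I would absorb the correction terms. From \eqref{eq:en-Maxw}--\eqref{eq:c_eps}, since $c_\e$ is bounded (it converges to $c_0$ as $\e\to0^+$), the quantity $\e c_\e+\mathcal{O}(\exp(-C/\e))$ is dominated by $C'\e$, uniformly for $r\in\mathcal{R}_\delta$ and $\e$ small. Hence
\begin{equation*}
    E_\e[r]-E^0_{\e r}=2[\Phi_{\sigma_0}(r)-b_0]-\e c_\e-\mathcal{O}(\exp(-C/\e))\geq 2\kappa_\delta-C'\e,
\end{equation*}
which is strictly positive as soon as $\e<\e_\delta:=2\kappa_\delta/C'$ (shrinking $\e_\delta$ if needed so as to remain within the range where the Maxwell solution exists by Theorem \ref{thm:Maxwell}). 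I do not expect a genuine obstacle here; the only delicate point is more bookkeeping than difficulty, namely establishing the uniform gap $\kappa_\delta$, which rests entirely on the double--well structure of $\Phi_{\sigma_0}$ recorded in properties (3)--(4) and on confining $r$ to the compact set $\mathcal{R}_\delta$.
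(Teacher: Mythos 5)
Your proposal is correct and follows essentially the same route as the paper: identify $z\equiv r$ as the unique constant solution with energy $2F(r)$, rewrite the leading term of the difference as $2[\Phi_{\sigma_0}(r)-b_0]$ via \eqref{eq:Gibbs} and \eqref{eq:Maxwell-p}, and conclude from the fact that $b_0$ is the minimum value of the Gibbs function $\Phi_{\sigma_0}$, attained only at $\alpha_0$ and $\beta_0$, so the gap is uniformly positive on $\mathcal{R}_\delta$ and absorbs the $\mathcal{O}(\e)$ correction. Your explicit compactness argument for the uniform gap $\kappa_\delta$ is a slightly more detailed bookkeeping of what the paper states in one line, but the substance is identical.
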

\begin{proof}
First of all, notice that the unique constant solution of \eqref{eq:min} is exactly $z\equiv r$ and its energy \eqref{eq:energy_eps} is given by $E_\e[r]=2F(r)$.
On the other hand, from \eqref{eq:en-Maxw}, by using \eqref{eq:Maxwell-p} and \eqref{eq:Gibbs}, it follows that
\begin{align*}
    E^0_{\e r}-2F(r)&=2(\sigma_0r+\beta_0-F(r))+\mathcal{O}(\e)=2(\sigma_0r+\Phi_{\sigma_0}(\beta_0)-F(r))+\mathcal{O}(\e)\\
    &=2(\Phi_{\sigma_0}(\beta_0)-\Phi_{\sigma_0}(r))+\mathcal{O}(\e)<0,
\end{align*}
if $\e$ is sufficiently small because $\Phi_{\sigma_0}(\beta_0)$ is the global minimum of the Gibbs function $\Phi_{\sigma_0}$ and $r\in[\alpha_0+\delta,\beta_0-\delta]$. 
\end{proof}
The second step is to compare $E_{\e r}^0$ with the energy of other simple solutions, namely of simple solutions with admissible pair $\Delta$ that is not close to the Maxwell point $\Delta_0$.
We are not interested in the study of the existence of such solutions, but we can prove that if there exists a simple solution different from the Maxwell one, then $\sigma$ is bounded away from $\sigma_0$, $\Delta$ is close to $\partial_1\Sigma$ or $\partial_2\Sigma$, and the solution is close to a constant one. 
To be more precise, we have the following result.
\begin{prop}\label{prop:no-Maxw}
Choose $\rho, \delta>0$. Then, there exist $\bar\e=\bar\e(\rho,\delta)>0$ and $\rho_0=\rho_0(\delta)>0$ such that for any $\e\in(0,\bar\e)$, $r\in\mathcal{R}_\delta$ and $\Delta\in\mbox{\emph{Sol}}(\e,r)$, with $\Delta$ that is not the pair corresponding to the Maxwell solution, it follows that
\begin{align*}
    &|\sigma-\sigma_0|>\rho_0, \\
    \pi_2(\Delta)<\rho, \quad \mbox{ for } \sigma<\sigma_0, & \qquad 
    \pi_1(\Delta)<\rho, \quad \mbox{ for } \sigma>\sigma_0, \\
    |r-\beta_\sigma|<\rho, \quad \mbox{ for } \sigma<\sigma_0, & \qquad 
    |r-\alpha_\sigma|<\rho, \quad \mbox{ for } \sigma>\sigma_0. 
\end{align*}
\end{prop}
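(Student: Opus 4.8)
The plan is to read off everything from the two constraint equations in \eqref{eq:I_0,1}, the first of which, $I_0(\Delta;\e)=2\e^{-1}$, says exactly that the half--period of the underlying periodic orbit equals $2\e^{-1}$ and therefore diverges as $\e\to0^+$. Since $f_\Delta$ vanishes only at the turning points $z_1(\Delta),z_2(\Delta)$, and these zeros are simple as long as $z_i$ stays away from the critical points $\alpha_\sigma,\beta_\sigma$, the integral $I_0$ is bounded on every compact subset of $\overline\Sigma$ disjoint from $\partial_1\Sigma\cup\partial_2\Sigma$. Hence $I_0=2\e^{-1}$ cannot hold there for $\e$ small, and any $\Delta\in\mathrm{Sol}(\e,r)$ must approach $\partial_1\Sigma\cup\partial_2\Sigma$. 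I would then fix closed regions $\Omega_0,\Omega_1,\Omega_2$ as in \eqref{eq:F''>0}, with $\Omega_0$ a neighborhood of $\Delta_0$ and $\Omega_1,\Omega_2$ covering neighborhoods of $\partial_1\Sigma\setminus\{\Delta_0\}$ and $\partial_2\Sigma\setminus\{\Delta_0\}$; by Theorem \ref{thm:Maxwell} the unique solution in $\Omega_0$ for small $\e$ is the Maxwell one, so a non--Maxwell solution eventually lies in $\Omega_1\cup\Omega_2$.

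Next I would carry out the asymptotics in $\Omega_1$ (the region $\Omega_2$ being symmetric). By \eqref{eq:InCont} the functions $I_n+\tfrac{1}{\sqrt2}\mathcal Q_1z_1^n\ln(\pi_1)$ are continuous, hence uniformly bounded on $\Omega_1$ for $\e\in(0,\overline F)$. Inserting $I_0=2\e^{-1}$ into the case $n=0$ gives
\begin{equation*}
    \ln(\pi_1)=-\frac{2\sqrt2}{\mathcal Q_1(\Delta)}\,\e^{-1}+\mathcal O(1),
\end{equation*}
so that $\pi_1(\Delta)=\mathcal O(\exp(-C/\e))<\rho$ once $\e<\bar\e$. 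Dividing $I_1=2r\e^{-1}$ by $I_0=2\e^{-1}$ and using the two expansions, the logarithmically divergent terms dominate and produce $r=z_1(\Delta)+\mathcal O(\e)$; since $\pi_1\to0$, relation \eqref{eq:z_i-NR} gives $z_1(\Delta)=\alpha_\sigma+\mathcal O(\sqrt{\pi_1})$, whence $|r-\alpha_\sigma|=\mathcal O(\e)<\rho$. The same computation in $\Omega_2$, using the companion relation in \eqref{eq:InCont} together with \eqref{eq:z_i-boundary}, yields $\pi_2(\Delta)<\rho$ and $|r-\beta_\sigma|=\mathcal O(\e)<\rho$.

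To separate $\sigma$ from $\sigma_0$ I would exploit the mass window $\mathcal R_\delta$. In $\Omega_1$ we have $r=\alpha_\sigma+\mathcal O(\e)$, while $r\in\mathcal R_\delta$ forces $r\geq\alpha_0+\delta$; hence $\alpha_\sigma\geq\alpha_0+\delta/2$ for $\e$ small. As $\sigma\mapsto\alpha_\sigma$ is continuous, strictly increasing, and $\alpha_{\sigma_0}=\alpha_0$, this gives $\sigma\geq\sigma_0+\rho_0$ for some $\rho_0=\rho_0(\delta)>0$, so that $\Omega_1$ indeed corresponds to the branch $\sigma>\sigma_0$. Symmetrically, solutions in $\Omega_2$ satisfy $r=\beta_\sigma+\mathcal O(\e)\leq\beta_0-\delta$, forcing $\beta_\sigma\leq\beta_0-\delta/2$ and thus $\sigma\leq\sigma_0-\rho_0$. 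Together these prove $|\sigma-\sigma_0|>\rho_0$ and attach the correct sign of $\sigma-\sigma_0$ to each of the estimates on $\pi_i$ and on $|r-\alpha_\sigma|,|r-\beta_\sigma|$.

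The delicate point is the localization carried out in the first paragraph: one must show, uniformly for $r\in\mathcal R_\delta$, that every non--Maxwell solution actually lands in $\Omega_1\cup\Omega_2$. This requires the uniform boundedness of $I_0$ away from $\partial_1\Sigma\cup\partial_2\Sigma$ (so that the half--period condition cannot be met there), together with a careful handling of the overlap near $\Delta_0$: a solution simultaneously close to $\partial_1\Sigma$ and to $\partial_2\Sigma$ is close to $\Delta_0$ and hence lies in $\Omega_0$, where Theorem \ref{thm:Maxwell} forces it to be the Maxwell solution and thus excludes it. Once this localization and the uniform bound on the remainders $\e^2R_n$ (equivalently on the functions $\mathcal V_i$ in \eqref{eq:ln-pi}) are secured, the region--by--region estimates above follow directly from Proposition \ref{prop:In}.
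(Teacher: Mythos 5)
Your argument is correct and is essentially the proof the paper has in mind: the paper omits it, deferring to \cite[Theorem 7.2]{CGS} and remarking that the continuity properties \eqref{eq:InCont} on $\Omega_1$, $\Omega_2$ are the crucial ingredient, and your sketch is exactly that argument (localize non-Maxwell solutions near $\partial_1\Sigma\cup\partial_2\Sigma$ via the blow-up of the half-period $I_0=2\e^{-1}$, exclude the neighborhood of $\Delta_0$ by the uniqueness in Theorem \ref{thm:Maxwell}, then read off $\pi_i=\mathcal{O}(\exp(-C/\e))$ and $r=z_i+\mathcal{O}(\e)$ from \eqref{eq:InCont}, and finally use $r\in\mathcal{R}_\delta$ with the strict monotonicity of $\alpha_\sigma,\beta_\sigma$ to separate $\sigma$ from $\sigma_0$), with the only new feature relative to \cite{CGS} being the remainders $\e^2R_n$, which you correctly note are uniformly bounded on $\bar\Sigma$.
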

The proof of Proposition \ref{prop:no-Maxw} is very similar to the corresponding one of \cite[Theorem 7.2]{CGS} and we omit it.
It is worth observing that, as already mentioned in Remark \ref{rem:o1o2},  the continuity properties in the subregions $\Omega_1$ and $\Omega_2$ stated in \eqref{eq:InCont} of Proposition \ref{prop:In} play here a crucial role. 

By using \eqref{eq:en-simple}, \eqref{eq:en-Maxw} and Proposition \ref{prop:no-Maxw}, we prove that the energy of the Maxwell solution is strictly less than the energy of other simple solutions (if any).
\begin{prop}\label{prop:less}
For any $\delta\in\left(0,(\beta_0-\alpha_0)/2\right)$ and $r\in\mathcal{R}_\delta$, we can choose $\e_\delta>0$ such that if $\e\in(0,\e_\delta)$, then $E^0_{\e r}$ is strictly less than the energy of other simple solutions to \eqref{eq:min}.     
\end{prop}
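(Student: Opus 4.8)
The plan is to decide the comparison already at the level of the leading constant terms in the energy, namely $2(\sigma r+b)$ for a generic simple solution versus $2(\sigma_0 r+b_0)$ for the Maxwell solution, treating everything else as a subleading $\mathcal{O}(\e)$ correction. The starting point is Proposition \ref{prop:no-Maxw}: any admissible pair $\Delta=(\sigma,b)\in\mbox{Sol}(\e,r)$ distinct from the Maxwell one has $\sigma$ bounded away from $\sigma_0$ and lies near $\partial_1\Sigma$ or $\partial_2\Sigma$, with the corresponding solution close to a constant. Concretely, for $\sigma>\sigma_0$ one has $\pi_1(\Delta)<\rho$ and $|r-\alpha_\sigma|<\rho$, while for $\sigma<\sigma_0$ one has $\pi_2(\Delta)<\rho$ and $|r-\beta_\sigma|<\rho$, where the parameter $\rho$ is at our disposal. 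This places the non-Maxwell solution energetically close to the constant $z\equiv r$, whose energy was already shown to exceed $E^0_{\e r}$ in Proposition \ref{prop:comp-const}, so the present argument is essentially a quantitative version of that comparison.

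The first step is to estimate $\sigma r+b$ for such a $\Delta$. In the case $\sigma>\sigma_0$, writing $b=\Phi_\sigma(\alpha_\sigma)+\pi_1(\Delta)$ and using $\Phi_\sigma(\alpha_\sigma)=F(\alpha_\sigma)-\sigma\alpha_\sigma$ from \eqref{eq:Gibbs}, together with $r=\alpha_\sigma+\mathcal{O}(\rho)$, I obtain $\sigma r+b=F(\alpha_\sigma)+\sigma(r-\alpha_\sigma)+\pi_1(\Delta)=F(\alpha_\sigma)+\mathcal{O}(\rho)$; since $\alpha_\sigma=r+\mathcal{O}(\rho)$ and $F\in C^1$, this gives $\sigma r+b=F(r)+\mathcal{O}(\rho)$. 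The error is uniform because $\sigma$ ranges over the compact set $[\underline\sigma,\overline\sigma]$. The case $\sigma<\sigma_0$ is identical with $\beta_\sigma$ in place of $\alpha_\sigma$, yielding the same identity $\sigma r+b=F(r)+\mathcal{O}(\rho)$.

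The second step is to discard the integral remainders. The term $\e\int_{z_1}^{z_2}\sqrt{f_\Delta(s)(2-\e^2 f_\Delta(s))}\,ds$ in \eqref{eq:en-simple} is $\mathcal{O}(\e)$, because the integrand is bounded by Lemma \ref{lem:fdelta} and the interval has length at most $\beta_0-\alpha_0$; the Maxwell correction $\e c_\e$ in \eqref{eq:en-Maxw} is likewise $\mathcal{O}(\e)$. Subtracting and recalling $\sigma_0 r+b_0=\sigma_0 r+\Phi_{\sigma_0}(\beta_0)$ together with $F(r)=\Phi_{\sigma_0}(r)+\sigma_0 r$, I arrive at $E_{\e r}(\Delta)-E^0_{\e r}=2\big[\Phi_{\sigma_0}(r)-\Phi_{\sigma_0}(\beta_0)\big]+\mathcal{O}(\rho)+\mathcal{O}(\e)$, exactly as in Proposition \ref{prop:comp-const}. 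Since $\alpha_0,\beta_0$ are the global minima of the double-well $\Phi_{\sigma_0}$ and $r\in\mathcal{R}_\delta=[\alpha_0+\delta,\beta_0-\delta]$, compactness gives a uniform gap $\Phi_{\sigma_0}(r)-\Phi_{\sigma_0}(\beta_0)\ge m_\delta>0$. I would then fix $\rho=\rho(\delta)$ so small that $\mathcal{O}(\rho)<m_\delta$, invoke Proposition \ref{prop:no-Maxw} to obtain the associated threshold $\bar\e(\rho,\delta)$, and finally shrink $\e_\delta$ to absorb the $\mathcal{O}(\e)$ term, concluding $E^0_{\e r}<E_{\e r}(\Delta)$.

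The delicate point, and the only real obstacle, is that Proposition \ref{prop:no-Maxw} controls $\Delta$ only up to errors of size $\rho$, so the whole scheme rests on the uniform positive gap $m_\delta$ of the Gibbs function over $\mathcal{R}_\delta$ and on the correct ordering of the quantifiers: $\rho$ must be chosen after $\delta$ (small enough to beat $m_\delta$) but before $\e$. Once this ordering is respected, the remaining estimates are routine bookkeeping of the $\mathcal{O}(\rho)$ and $\mathcal{O}(\e)$ contributions.
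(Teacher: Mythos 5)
Your proof is correct, but it takes a genuinely different route from the paper's. The paper uses only two pieces of information about a non-Maxwell pair $\Delta=(\sigma,b)$: the sign condition $\pi_2(\Delta)\ge 0$ (resp.\ $\pi_1$), which lets it drop that term entirely, and the separation $|\sigma-\sigma_0|>\rho_0$ from Proposition \ref{prop:no-Maxw}. It then introduces $R(\sigma,r):=(\sigma_0-\sigma)r+\Phi_{\sigma_0}(\beta_0)-\Phi_\sigma(\beta_\sigma)$, differentiates in $\sigma$ via the identity \eqref{eq:punticritici} to get $\partial_\sigma R=\beta_\sigma-r\ge 0$ on the relevant set, and combines $R(\sigma_0,r)=0$ with the $\sigma$-separation to obtain a uniform negative bound $R\le -C_0$. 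You instead exploit the remaining conclusions of Proposition \ref{prop:no-Maxw} — $\pi_i(\Delta)<\rho$ and $|r-\alpha_\sigma|<\rho$ (resp.\ $|r-\beta_\sigma|<\rho$) — to show $\sigma r+b=F(r)+\mathcal{O}(\rho)$, thereby reducing the comparison to the Gibbs-function gap $\Phi_{\sigma_0}(r)-\Phi_{\sigma_0}(\beta_0)\ge m_\delta>0$ already used for constant solutions in Proposition \ref{prop:comp-const}; the quantifier ordering $\delta\to\rho\to\e$ that you flag is indeed the crux and you handle it correctly. Your version is more conceptual (a non-Maxwell simple solution is energetically an approximate constant, and constants lose) and avoids the monotonicity computation for $R$, at the price of leaning on more of Proposition \ref{prop:no-Maxw}; the paper's version needs only the $\sigma$-separation and the sign of $\pi_i$, so it would survive even if the ``closeness to a constant'' estimates were weakened. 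Both arguments yield the same uniformity in $r\in\mathcal{R}_\delta$.
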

\begin{proof}
Let $\Delta\in\mbox{Sol}(\e,r)$, with $\Delta=(\sigma,b)\neq\Delta(\e,r)$. Then, from Proposition \ref{prop:no-Maxw} we know that $\sigma \neq \sigma_0$ and we may assume that $\sigma<\sigma_0$; the case $\sigma>\sigma_0$ is similar.
For any $\delta\in\left(0,(\beta_0-\alpha_0)/2\right)$ and $r\in\mathcal{R}_\delta$,
\eqref{eq:en-simple} and \eqref{eq:en-Maxw} imply
\begin{equation*}
    E^0_{\e r}-E_{\e r}(\Delta)=2(\sigma_0 r+b_0)-2(\sigma r+b)+\mathcal{O}(\e).
\end{equation*}
By using \eqref{eq:Maxwell-p} and \eqref{eq:trans-pi}, we infer
\begin{align*}
    E^0_{\e r}-E_{\e r}(\Delta)&=2\left[(\sigma_0-\sigma)r+\Phi_{\sigma_0}(\beta_0)-\Phi_\sigma(\beta_\sigma)-\pi_2(\Delta)\right]+\mathcal{O}(\e)\\
    &=2\left[R(\sigma,r)-\pi_2(\Delta)\right]+\mathcal{O}(\e),
\end{align*}
where the $\mathcal{O}(\e)$ term is uniform for $\Delta\in\bar\Sigma$ and $r\in\mathcal{R}_\delta$.
Hence, since $\pi_2(\Delta) \geq0$ (see \eqref{eq:b}), it suffices to prove that $$R(\sigma,r)<-C_0$$ 
for any $r\in\mathcal{R}_\delta$, $\sigma<\sigma_0$, $\e\in(0,\e_\delta)$ and $\Delta\in\mbox{Sol}(\e,r)$, with $\Delta\neq\Delta(\e,r)$, for some positive constants $C_0,\e_\delta$.
Since $\alpha_0<r<z_2(\Delta)<\beta_\sigma$ (see \eqref{eq:z_i-sigma}-\eqref{eq:z_i-boundary}-\eqref{eq:r-in-z}), where $\beta_\sigma$ is a strictly increasing function of $\sigma$, we only need to study $R(\sigma,r)$ on the set
\begin{equation*}
    \mathcal{A}:=\left\{(\sigma,r)\, :\, r\in[\alpha_0,\beta_\sigma], \, \sigma\in[\underline\sigma,\sigma_0]\right\}
\end{equation*}
and prove it is uniformly negative for $\sigma \leq \sigma_0 - \tilde\rho$, for any $\tilde\rho>0$.
A direct differentiation together with \eqref{eq:punticritici} gives 
\begin{equation*}
    \frac{\partial}{\partial\sigma}R(\sigma,r)= -r+\beta_\sigma, \qquad \qquad \forall\,(\sigma,r)\in\mathcal{A},
\end{equation*}
and, since $R(\sigma_0,r)=0$ for any $r\in[\alpha_0,\beta_\sigma]$, we have
\begin{equation*}
    R(\sigma,r)<0, \qquad \qquad \forall\, \sigma<\sigma_0, \, r\in[\alpha_0,\beta_\sigma].
\end{equation*}
Now, for any $\tilde\rho>0$, there exists a positive constant $C_0$ such that
\begin{equation*}
    R(\sigma,r)\leq-C_0, \qquad 
    \mbox{ for any }  \underline\sigma<\sigma\leq\sigma_0-\tilde\rho<\sigma_0, \quad r\in[\alpha_0,\beta_\sigma].
\end{equation*}
However, Proposition \ref{prop:no-Maxw}  in the case $\sigma<\sigma_0$ implies the existence of $\e_\delta>0$ and $\rho_0>0$ such that if $\e\in(0,\e_\delta)$, $r\in\mathcal{R}_\delta$, $\Delta\in\mbox{Sol}(\e,r)$, with $\Delta$ that is not the pair corresponding to the Maxwell solution, then $\sigma<\sigma_0-\rho_0$.
Hence, we end up with
\begin{equation*}
    E^0_{\e r}-E_{\e r}(\Delta)\leq-2C_0+\mathcal{O}(\e),
\end{equation*}
and the proof is complete.
\end{proof}
It remains to compare the energy of the Maxwell solution with the one of non monotone solutions of \eqref{eq:min}.
For this, let us first say that $z\in\mathcal{Z}_{\e r}$ is a \textbf{local minimizer} of $E_\e$ if there exists a neighborhood 
 $\mathcal{N}$ of $z$ in  $\mathcal{Z}_{\e r}$ such that
\begin{equation*}
	E_\e[z]\leq E_\e[y], \qquad \qquad \forall\, y\in\mathcal{N}.
\end{equation*}
The desired property for $E_{\e r}^0$ is clearly implied by the following result.
\begin{prop}\label{prop:comp-nonmon}
Non monotone solutions of \eqref{eq:min} can not even be local minimizers of $E_\e$.
\end{prop}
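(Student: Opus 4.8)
The plan is to rule out local minimality through the sign of the constrained second variation. Since a non-monotone solution $z$ of \eqref{eq:min} is a critical point of \eqref{eq:energy_eps} with Lagrange multiplier $\sigma$, for every $\phi\in H^1(-\e^{-1},\e^{-1})$ with $\int_{-\e^{-1}}^{\e^{-1}}\phi\,dt=0$ the first variation vanishes, $\frac{d}{d\tau}E_\e[z+\tau\phi]\big|_{\tau=0}=\e\sigma\int_{-\e^{-1}}^{\e^{-1}}\phi\,dt=0$; moreover $z+\tau\phi\in\mathcal Z_{\e r}$ for $|\tau|$ small because $z\ge z_1>0$ by \eqref{eq:z_i-sigma}. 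Hence the behaviour of $E_\e$ near $z$ along such $\phi$ is governed by
\[
  \delta^2E_\e[z](\phi)=\e\int_{-\e^{-1}}^{\e^{-1}}\Big[Q''(\e z')(\phi')^2+F''(z)\,\phi^2\Big]\,dt,
\]
and it suffices to produce one mass-free $\phi$ for which this quadratic form is strictly negative.

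The driving observation is that $z'$ solves the Jacobi (linearized) equation of this form: differentiating the ODE in \eqref{eq:min}, rewritten as $Q''(\e z')z''=\Phi'_\sigma(z)$, gives $\frac{d}{dt}\big(Q''(\e z')z''\big)=F''(z)z'$, i.e. $L\,z'=0$ with $L\phi:=-\frac{d}{dt}\big(Q''(\e z')\phi'\big)+F''(z)\phi$. By Remark \ref{rem:unique} a non-monotone solution is the restriction of a periodic orbit with $nT(\Delta)=2\e^{-1}$, $n\ge2$, so $z'$ vanishes at an interior turning point $t_*\in(-\e^{-1},\e^{-1})$, where $z(t_*)\in\{z_1,z_2\}$ and, since $z_1,z_2$ are not critical points of $\Phi_\sigma$ (cf. the proof of Proposition \ref{prop:periodicsol} and \eqref{eq:z_i-final}), $Q''(0)z''(t_*)=\Phi'_\sigma(z(t_*))\ne0$. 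I would then use the truncated Jacobi field $\tilde\phi:=z'$ on $[-\e^{-1},t_*]$ and $\tilde\phi:=0$ on $(t_*,\e^{-1}]$. This $\tilde\phi$ is continuous — hence in $H^1$ — precisely because $z'(t_*)=0$, and an integration by parts using $Q''(\e z')z''=\Phi'_\sigma(z)$ together with $z'(\pm\e^{-1})=z'(t_*)=0$ yields $\delta^2E_\e[z](\tilde\phi)=0$. However $\tilde\phi$ is only a \emph{broken} extremal: the quantity $Q''(\e z')\tilde\phi'$ jumps at $t_*$, so for a bump $\eta\in C_c^\infty$ concentrated near $t_*$ one computes, integrating by parts on each side of $t_*$, the mixed term $\e\int[Q''(\e z')\tilde\phi'\eta'+F''(z)\tilde\phi\eta]\,dt=\e\,\Phi'_\sigma(z(t_*))\,\eta(t_*)$, which is nonzero as soon as $\eta(t_*)\ne0$. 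Consequently $\delta^2E_\e[z](\tilde\phi+s\eta)=2s\,\e\,\Phi'_\sigma(z(t_*))\eta(t_*)+s^2\,\delta^2E_\e[z](\eta)$ is strictly negative for $s$ small of the appropriate sign.

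It remains to make this negative direction mass-free. When $z$ possesses at least two interior turning points — equivalently $n\ge3$ — one can choose the truncation interval $[c,d]$ to be one full period bounded by turning points with $z(c)=z(d)$ and at least one of $c,d$ interior; then $\int_{-\e^{-1}}^{\e^{-1}}\tilde\phi\,dt=z(d)-z(c)=0$, and taking the corrector $\eta$ with zero mean and $\eta(t_*)\ne0$ keeps $\int(\tilde\phi+s\eta)\,dt=0$. This produces an admissible $\phi$ with $\delta^2E_\e[z](\phi)<0$, so $z$ is not a local minimizer.

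The main obstacle is the remaining borderline configuration of a \emph{single} interior layer pair, $n=2$: here the only interior turning point sits at a value $z(t_*)\in\{z_1,z_2\}$, no subinterval cut at turning points carries zero mass except the whole $[-\e^{-1},\e^{-1}]$ (on which $z'$ is smooth and $\delta^2E_\e[z](z')=0$), and one checks that the only low-lying mass-free mode is the translation $z'$ itself, whose quadratic form vanishes identically. The genuine instability is then exponentially small: it is the layer/boundary interaction that makes the centred bump a strict \emph{maximum} of $E_\e$ along the mass-preserving translation. I would settle this case by the same exponential-asymptotics analysis underlying Theorem \ref{thm:Maxwell} (controlling $\Delta(\e)$ and the layer positions up to $O(\exp(-C/\e))$), showing that displacing the interior bump toward the boundary strictly lowers the energy at fixed mass, or by adapting the corresponding argument of \cite{CGS}. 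Reconciling this exponentially small gain against the merely polynomially small cost of enforcing the mass constraint is the delicate point.
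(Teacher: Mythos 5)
Your overall strategy---truncate the Jacobi field $z'$ and add a corrector whose cross term with $z'$ in the second variation is linear and of either sign---is exactly the paper's, and your treatment of solutions with at least two interior turning points is sound. The genuine gap is the case you flag yourself, $n=2$ (a single full period filling $[-\e^{-1},\e^{-1}]$): you leave it unresolved, and the resolution you sketch (exponentially small layer/boundary interactions, to be balanced against a polynomially small mass-correction cost) is not carried out and, more importantly, misdiagnoses the situation. The degeneracy of the admissible direction $z'$ is not exponentially small and has nothing to do with layer interactions; it is broken at order $\e$ by the \emph{boundary} terms of the quadratic form. Indeed, for any zero-mean $\eta_1\in H^1(-\e^{-1},\e^{-1})$, integrating by parts and using $\left[Q''(\e z')z''\right]'=F''(z)z'$ gives
\[
\e\int_{-\e^{-1}}^{\e^{-1}}\left[Q''(\e z')z''\eta_1'+F''(z)z'\eta_1\right]dt
=\e\left[Q''(0)z''(\e^{-1})\eta_1(\e^{-1})-Q''(0)z''(-\e^{-1})\eta_1(-\e^{-1})\right],
\]
and $z''(\pm\e^{-1})=\Phi'_\sigma(z_1)\neq0$ because $z_1$ is not a critical point of $\Phi_\sigma$ for an admissible pair. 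Choosing $\eta_1$ with zero mean, $\eta_1(-\e^{-1})=1$ and $\eta_1(\e^{-1})=0$ makes this cross term equal to $-\e\,\Phi'_\sigma(z_1)\neq0$, so $J(z,z'+\gamma\eta_1)=-2\e\gamma\,\Phi'_\sigma(z_1)+\e\,\mathcal{O}(\gamma^2)$ is strictly negative for small $\gamma$ of the right sign. (Equivalently: were $J(z,\cdot)$ positive semidefinite on the zero-mean subspace, Cauchy--Schwarz for the semidefinite bilinear form would force this cross term to vanish, since $J(z,z')=0$.)

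This is precisely what the paper does, uniformly for all $n\ge2$: it anchors the corrector at the endpoint $-\e^{-1}$ (where $z''\neq0$ automatically) rather than at an interior turning point, so the obstruction you encounter at $n=2$ never arises---it is an artifact of insisting that the corrector be a compactly supported interior bump. Note also why this does not contradict the minimality of the Maxwell solution: the untruncated field $z'$ has zero mean exactly when $z(\e^{-1})=z(-\e^{-1})$, i.e.\ for non-monotone solutions, but not for simple ones, for which $\int z'=z_2-z_1\neq0$ and $z'$ is not an admissible variation.
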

\begin{proof}
Following \cite{CGS}, we shall prove that if $z$ is a non monotone solution of \eqref{eq:min}, then the second variation 
\begin{equation}\label{eq:second-nonm}
	J(z,\eta):=\frac{d^2}{d\xi^2}E_\e[z+\xi\eta]\Big|_{\xi=0}<0,
\end{equation}
at some $\eta\in H^1(-\e^{-1},\e^{-1})$ satisfying
\begin{equation}\label{eq:int-eta=0}
	\int_{-\e^{-1}}^{\e^{-1}}\eta(t)\,dt=0.
\end{equation}
Clearly, \eqref{eq:second-nonm}-\eqref{eq:int-eta=0} imply that $z$ can not be a local minimizer, because the function $z+\xi\eta$ belongs to $\mathcal{Z}_{\e r}$ for all sufficiently small $\xi\in\R$.

A direct computation, using \eqref{eq:energy_eps}, shows that
\begin{equation*}
	J(z,\eta)=\e\int_{-\e^{-1}}^{\e^{-1}}\left[Q''(\e z'(t))\eta'(t)^2 +F''(z(t))\eta(t)^2\right]\, dt.
\end{equation*}
In view of Remark \ref{rem:unique},  non monotone solutions of the problem \eqref{eq:min} are periodic solutions on the whole real line restricted in an interval larger than the period. 
In other words, there exists $T\in(-\e^{-1},\e^{-1}]$ such that
\begin{equation}\label{eq:per-instr}
	z(T)=z(-\e^{-1}), \qquad z'(T)=z'(-\e^{-1})=0.
\end{equation}
Let
\begin{equation*}
	\eta_0(t):=\begin{cases}
	z'(t), \qquad \qquad & t\in[-\e^{-1},T],\\
	0, & t\in[T,\e^{-1}],
	\end{cases}
\end{equation*}
and let $\eta_1\in H^1(-\e^{-1},\e^{-1})$ be any function satisfying \eqref{eq:int-eta=0} and
\begin{equation}\label{eq:eta1}
	\eta_1(-\e^{-1})=1, \qquad \qquad \eta_1(t)=0, \qquad \mbox{ for } t\in[T,\e^{-1}].
\end{equation}
For any $\gamma\in\R$, define $\tilde\eta_\gamma:=\eta_0+\gamma\eta_1$ and
notice that $\tilde\eta_\gamma$ satisfies \eqref{eq:int-eta=0}, so that $z+\xi\tilde\eta_\gamma\in\mathcal{Z}_{\e r}$ if $\xi$ is sufficiently small.
We have
\begin{align*}
	J(z,\tilde\eta_\gamma)&=\e\int_{-\e^{-1}}^{T}\left[Q''(\e z'(t))z''(t)^2 +F''(z(t))z'(t)^2\right]\, dt\\
	&\qquad +2\e\gamma\int_{-\e^{-1}}^{T}\left[Q''(\e z'(t))z''(t)\eta_1'(t) +F''(z(t))z'(t)\eta_1(t)\right]\, dt+\e\mathcal{O}(\gamma^2),
\end{align*}
and, integrating by parts, we obtain 
\begin{align*}
	J(z,\tilde\eta_\gamma)&=\e\int_{-\e^{-1}}^{T}\left\{-\left[Q''(\e z'(t))z''(t)\right]' +F''(z(t))z'(t)\right\}z'(t)\, dt\\
	&\qquad +2\e\gamma\int_{-\e^{-1}}^{T}\left\{-\left[Q''(\e z'(t))z''(t)\right]' +F''(z(t))z'(t)\right\}\eta_1(t)\, dt\\
	&\qquad +\e\left[-2\gamma Q''(0)z''(-\e^{-1})+\mathcal{O}(\gamma^2)\right],
\end{align*}
where we used \eqref{eq:per-instr}-\eqref{eq:eta1} to evaluate the boundary terms.
Since $Q''(0)=1$ and the ODE in \eqref{eq:min} gives
\begin{equation*}
	\left[Q''(\e z')z''\right]'=F''(z)z',\qquad \qquad \mbox{ in } (-\e^{-1},\e^{-1}),
\end{equation*}
we conclude that
\begin{equation*}
	J(z,\tilde\eta_\gamma)=\e\left[-2\gamma z''(-\e^{-1})+\mathcal{O}(\gamma^2)\right].
\end{equation*}
The solution $z$ is non monotone and, as a consequence, 
\begin{equation*}
    z''(-\e^{-1}) = \frac{F'(z(-\e^{-1})) - \sigma}{Q''(\e z'(-\e^{-1}))}\neq0.
\end{equation*}
Hence, we can choose $\gamma\in\R$ such that $J(z,\tilde\eta_\gamma)<0$, that is \eqref{eq:second-nonm} holds true and the proof is complete.
\end{proof}

Summarizing, thanks to Propositions \ref{prop:comp-const}, \ref{prop:less} and \ref{prop:comp-nonmon}, we can state that the Maxwell solution $z_{\e r}$ (and its reversal $z_{\e r}^R$) corresponding to the pair $\Delta(\e,r)$   is the stationary point of \eqref{eq:energy_eps}-\eqref{eq:zepserre} with least energy, for $r$ in closed subsets of $(\alpha_0,\beta_0)$ and $\e>0$ sufficiently small.  
As a trivial consequence, for $r$ in closed subsets of $(\alpha_0,\beta_0)$ and $\e>0$ sufficiently small, the functions
\begin{equation*}
    u_{\e r}(x):=z_{\e r}(x/\e), \qquad \qquad u_{\e r}^R(x):=u_{\e r}(-x), \qquad \qquad \qquad x\in[-1,1],
\end{equation*}
are the global minimizers in $H^1(-1,1)$ of the functional \eqref{eq:energy} with mass constraint \eqref{eq:constraint-intro}, provided the latter exists.  

We conclude our analysis by showing that the function $u_{\e r}$ (resp.\ its reversal $u_{\e r}^R$) converges, as $\e\to0$,  to the increasing (resp.\ decreasing) single interface solution defined in \eqref{eq:mineps0}, that is the minimizer  of the energy \eqref{eq:energy} for $\e=0$.
\begin{thm}\label{thm:eps-conv}
For each $r\in(\alpha_0,\beta_0)$ and $x\in[-1,1]$, one has
\begin{align*}
    \lim_{\e\to0^+} u_{\e r}(x)=u_0(x), \qquad \qquad \mbox{ if }\, x\neq-1+\ell_1,\\
    \lim_{\e\to0^+} u_{\e r}^R(x)=u_0(-x), \qquad \qquad \mbox{ if } \,x\neq-1+\ell_2,
\end{align*}
where $u_0, \ell_1$ and $\ell_2$ are defined in \eqref{eq:mineps0} and \eqref{eq:ell}.
\end{thm}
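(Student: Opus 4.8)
The plan is to exploit the explicit first-order description $z_{\e r}'=H_\e^+(f_\Delta(z_{\e r}))$ of the increasing Maxwell solution, see \eqref{eq:z'=+}, together with the exponential convergence $\Delta(\e,r)\to\Delta_0$, $z_1\to\alpha_0$, $z_2\to\beta_0$ granted by Theorem \ref{thm:Maxwell}, in order to show that $u_{\e r}(x)=z_{\e r}(x/\e)$ develops a single sharp transition whose location is pinned down by the mass constraint. First I would record that each $u_{\e r}$ is monotone increasing, with $u_{\e r}(-1)=z_1\to\alpha_0$ and $u_{\e r}(1)=z_2\to\beta_0$; hence $\{u_{\e r}\}$ is uniformly bounded and $\alpha_0\le\liminf_{\e\to0^+} u_{\e r}\le\limsup_{\e\to0^+} u_{\e r}\le\beta_0$.

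The quantitative core of the argument is a uniform bound on the width of the transition region. Since $f_{\Delta_0}(z)=\Phi_{\sigma_0}(z)-b_0$ is strictly positive on the open interval $(\alpha_0,\beta_0)$ by properties (3)--(4) of $\Phi_{\sigma_0}$, and $f_\Delta\to f_{\Delta_0}$ uniformly on compact subsets because $\Delta(\e,r)\to\Delta_0$, for every $\eta>0$ there are $c_\eta>0$ and $\e_\eta>0$ with $f_\Delta(z)\ge c_\eta$ on $[\alpha_0+\eta,\beta_0-\eta]$ for $\e\in(0,\e_\eta)$. Using the explicit inverse \eqref{eq:defP^-1}, one has $H_\e^+(\xi)\ge\sqrt{\xi}$ whenever $\e^2\xi<1$ (which holds here since $f_\Delta\le\overline F^{-2}$ and $\e<\overline F$), so $z_{\e r}'\ge\sqrt{c_\eta}$ whenever $z_{\e r}\in[\alpha_0+\eta,\beta_0-\eta]$. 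Consequently the $t$-time spent by $z_{\e r}$ in this band is at most $(\beta_0-\alpha_0)/\sqrt{c_\eta}$, \emph{independently of} $\e$; after rescaling $x=\e t$, the set $\{x:\ u_{\e r}(x)\in[\alpha_0+\eta,\beta_0-\eta]\}$ has length $O(\e)$.

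With the width estimate in hand I would pass to the limit via Helly's selection theorem: along any subsequence $u_{\e r}\to u_*$ pointwise, with $u_*$ monotone increasing. The width bound forces the (monotone) interval $\{u_*\in(\alpha_0+\eta,\beta_0-\eta)\}$ to have zero length for each $\eta$, for otherwise two continuity points $x_1<x_2$ of $u_*$ inside it would, by monotonicity of $u_{\e r}$, trap the whole fixed-length segment $[x_1,x_2]$ inside the $O(\e)$ transition band, a contradiction. Hence $u_*\in\{\alpha_0,\beta_0\}$ a.e., and being monotone it is a single jump, $u_*=\alpha_0$ on $(-1,x_*)$ and $u_*=\beta_0$ on $(x_*,1)$ for some $x_*\in[-1,1]$. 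Dominated convergence in the constraint \eqref{eq:constraint-intro} then gives $2r=\alpha_0(x_*+1)+\beta_0(1-x_*)$, so that $x_*=-1+\ell_1$ with $\ell_1$ as in \eqref{eq:ell}. Since $x_*$ is thereby independent of the subsequence and $u_0$ defined in \eqref{eq:mineps0} is continuous off $x_*$, the full family converges to $u_0$ at every $x\neq-1+\ell_1$. The statement for the reversal is then immediate from $u_{\e r}^R(x)=u_{\e r}(-x)$ and the identity $-1+\ell_2=1-\ell_1$ (equivalently $\ell_1+\ell_2=2$).

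I expect the main obstacle to be precisely the uniform-in-$\e$ transition-width estimate of the second paragraph: one must combine the strict positivity of $f_{\Delta_0}$ on the \emph{open} interval $(\alpha_0,\beta_0)$ with the exponential closeness of $\Delta(\e,r)$ to $\Delta_0$ so that the lower bound $c_\eta$ — and hence the crossing time before rescaling — can be taken independent of $\e$. Once this is secured, the step-function structure of the limit and the identification of the jump point through the mass constraint are routine monotonicity and compactness arguments.
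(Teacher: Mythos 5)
Your proposal is correct, but it proves the theorem by a genuinely different and softer route than the paper. The paper works directly with the residence times
$T_1(\e,r)$, $T_2(\e,r)$ that the Maxwell solution spends within distance $\e$ of its boundary values: applying the decomposition \eqref{eq:splitIn}--\eqref{eq:Rn} to these integrals and invoking the logarithmic expansions of \cite[Proposition~6.3, Lemma~5.1]{CGS} together with \eqref{eq:h_i}--\eqref{eq:Delta_0}, it obtains the sharp asymptotics $T_i(\e,r)=\ell_i/\e+\mathcal{O}(\ln\e)$, which, after rescaling and using $\ell_1+\ell_2=2$, directly locate the two plateaus and hence the interface, with a quantitative $\mathcal{O}(\e\ln\e)$ error on its position. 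You instead bypass all the singular expansions: your only quantitative input is the elementary lower bound $z_{\e r}'\ge\sqrt{c_\eta}$ on the band $[\alpha_0+\eta,\beta_0-\eta]$ (which is sound: $f_{\Delta_0}>0$ on the open interval by properties (3)--(4), $\Delta(\e,r)\to\Delta_0$ by \eqref{eq:O-grande}, and $H^+_\e(\xi)\ge\sqrt{\xi}$ for $\e^2\xi<1$, guaranteed by Lemma~\ref{lem:fdelta} and $\e<\overline F$), giving an $\mathcal{O}(\e)$ transition width; the location of the jump is then recovered \emph{a posteriori} from the mass constraint via Helly's theorem and dominated convergence, rather than computed. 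Each approach buys something: yours is more elementary, uses the mass constraint as the structural identifier of the interface, and would survive essentially unchanged for a generic $Q$ as in \eqref{eq:Q-generic}; the paper's yields an explicit convergence rate for the interface position that a pure compactness argument cannot provide. One point worth making explicit in a write-up: Helly gives convergence only along subsequences, so you must (as you implicitly do) observe that every subsequential limit coincides with $u_0$ off the single point $-1+\ell_1$ to upgrade to convergence of the full family.
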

\begin{proof}
Let us denote by
\begin{align*}
    T_1(\e,r)&:=\int_{z_{\e r}(-\e^{-1})}^{z_{\e r}(-\e^{-1})+\e}\frac{ds}{H^+_\e(f_{\Delta(\e,r)}(s))},\\
    T_2(\e,r)&:=\int_{z_{\e r}(\e^{-1})-\e}^{z_{\e r}(\e^{-1})}\frac{ds}{H^+_\e(f_{\Delta(\e,r)}(s))},
\end{align*}
where $H^+_\e$ is defined in \eqref{eq:defP^-1}.
Recalling \eqref{eq:implicit-z}, $T_1(\e,r)$ represents the \emph{time} the Maxwell solution $z_{\e r}$ takes to go from its initial position $z_{\e r}(-\e^{-1})$ to $z_{\e r}(-\e^{-1})+\e$, while $T_2(\e,r)$ represents the \emph{time} to go from $z_{\e r}(\e^{-1})-\e$ to $z_{\e r}(\e^{-1})$.
We claim that for each fixed $r\in(\alpha_0,\beta_0)$
\begin{equation}\label{eq:T_i}
    T_i(\e,r)=\frac{\ell_i}{\e}+\mathcal{O}(\ln\e), \qquad \qquad \mbox{ as } \e\to0^+.
\end{equation}
Indeed, the decomposition used for \eqref{eq:splitIn}-\eqref{eq:Rn} applied to $T_1$ yields
\begin{align*}
    T_1(\e,r)=&\frac{1}{\sqrt2}\int_{z_{\e r}(-\e^{-1})}^{z_{\e r}(-\e^{-1})+\e}\frac{ds}{\sqrt{f_{\Delta(\e,r)}(s)}}\\
    &\qquad
    -\e^2\int_{z_{\e r}(-\e^{-1})}^{z_{\e r}(-\e^{-1})+\e}\frac{\sqrt{f_{\Delta(\e,r)}(s)}\left(\sqrt{2(2-\e^2f_{\Delta(\e,r)}(s))}-1\right)}{\sqrt{2(2-\e^2f_{\Delta(\e,r)}(s))}\left(\sqrt{2-\e^2f_{\Delta(\e,r)}(s)}+\sqrt2\right)}\,ds.
\end{align*}
The first integral in the previous equality is studied in \cite[Proposition 6.3]{CGS}, while the second one is a continuous function of $\Delta\in\Sigma$ and $\e>0$ sufficiently small.
Hence, we readily obtain 
\begin{align*}
T_1(\e,r)=&\sqrt2\mathcal{Q}_1(\Delta(\e,r))\left[\ln\left(\e+\lambda(\Delta(\e,r))+\sqrt{\e^2+2\lambda(\Delta(\e,r))\e}\right)-\ln\lambda(\Delta(\e,r))\right]\\
    &\qquad +J(\Delta(\e,r),\e),
\end{align*}
where $J$ is continuous on $\Omega_0\times[0,\e_0]$ for $\e_0$ sufficiently small and $\lambda$ is defined in \eqref{eq:deflambda}.
Taking advantage of \cite[Lemma 5.1]{CGS} and \eqref{eq:h_i}-\eqref{eq:Delta_0}-\eqref{eq:est-K}, we obtain
\begin{equation*}
    \lambda(\Delta(\e,r))=\mathcal{O}(\exp(-C/\e)), \qquad \qquad \ln(\lambda(\Delta(\e,r)))=-\frac{c_1(r)}{2\e}+\mathcal{O}(1).
\end{equation*}
As an obvious consequence,
\begin{equation*}
    \ln\left(\e+\lambda(\Delta(\e,r))+\sqrt{\e^2+2\lambda(\Delta(\e,r))\e}\right)=\mathcal{O(\ln\e)}.
\end{equation*}
Moreover, \eqref{eq:p-q} and \eqref{eq:Delta_0} yield
\begin{equation*}
    \mathcal{Q}_1(\Delta(\e,r))=B_1+\mathcal{O}(\exp(-C/\e)).
\end{equation*}
Putting all together and using \eqref{eq:c1-B1}, we end up with
\begin{align*}
    T_1(\e,r)=&\sqrt2 B_1\frac{c_1(r)}{2\e}+\mathcal{O}(\ln\e)=\frac{2(\beta_0-r)}{(\beta-\alpha_0)\e}+\mathcal{O}(\ln\e),
\end{align*}
that is \eqref{eq:T_i} with $i=1$ and the definition \eqref{eq:ell}.
The proof of \eqref{eq:T_i} for $i=2$ is very similar.

Now, since $u_{\e r}(x):=z_{\e r}(x/\e)$, \eqref{eq:T_i} gives
\begin{equation*}
    |z_{\e r}(-\e^{-1})-u_{\e r}(x)|\leq\e, \qquad \mbox{ for }\, -1\leq x\leq-1+\ell_1+\mathcal{O}(\e\ln\e),
\end{equation*}
and
\begin{equation*}
    |z_{\e r}(\e^{-1})-u_{\e r}(x)|\leq\e, \qquad \mbox{ for }\, 1-\ell_2+\mathcal{O}(\e\ln\e)\leq x\leq1.
\end{equation*}
Therefore, passing to the limit as $\e\to0^+$, using  \eqref{eq:O-grande} and the equality $\ell_1+\ell_2=2$, we conclude that $u_{\e r}(x)\to u_0(x)$, as $\e\to0^+$, for any $x\in[-1,1]$, with $x\neq-1+\ell_1$.
The result for $u_{\e r}^R$ is a trivial consequence and the proof is complete.
\end{proof}

\section*{Declaration of competing interests}
Authors have no competing interests to declare.

\section*{Acknowledgements}
We thank the anonymous referee for her/his comments which helped to improve the paper.
The work of R. Folino was partially supported by DGAPA-UNAM, program PAPIIT, grant IA-102423.

\end{document}